\newtheorem{theorem}{Theorem}[section]
\newtheorem{lemma}[theorem]{Lemma}
\newtheorem{proposition}[theorem]{Proposition}
\theoremstyle{definition}
\newtheorem{definition}[theorem]{Definition}
\newtheorem{remark}[theorem]{Remark}
\newcounter{theoremintro}
\newtheorem{theoremi}[theoremintro]{Theorem}
\newcommand{\id}{{\rm id}}
\newcommand{\cC}{{\mathcal C}}
\newcommand{\cF}{{\mathcal F}}
\newcommand{\cV}{{\mathcal V}}
\newcommand{\fC}{{\mathfrak Y}}
\newcommand{\sA}{{\mathscr A}}
\newcommand{\sB}{{\mathscr B}}
\newcommand{\sC}{{\mathscr C}}
\newcommand{\sD}{{\mathscr D}}
\newcommand{\sE}{{\mathscr E}}
\newcommand{\sP}{{\mathscr P}}
\newcommand{\sS}{{\mathscr S}}
\newcommand{\fA}{{\mathfrak A}}
\newcommand{\Cb}{{\mathbb C}}
\newcommand{\Zb}{{\mathbb Z}}
\newcommand{\Pb}{{\mathbb P}}
\newcommand{\Nb}{{\mathbb N}}
\newcommand{\alg}{{\rm alg}}
\newcommand{\m}{{\rm m}}
\newcommand{\eps}{\varepsilon}
\newcommand{\Hamm}{{\rm Hamm}}
\newcommand{\hmax}{h}
\newcommand{\hinf}{{\underline{h}}}
\DeclareMathOperator{\Hom}{Hom}
\DeclareMathOperator{\Map}{Map}
\DeclareMathOperator{\Sym}{Sym}
\DeclareMathOperator{\supp}{supp}
\begin{document}

\title{Entropy, products, and bounded orbit equivalence}
\author{David Kerr}
\address{David Kerr,
Mathematisches Institut,
WWU M{\"u}nster,
Einsteinstr.\ 62,
48149 M{\"u}nster, Germany}
\email{kerrd@uni-muenster.de}

\author{Hanfeng Li}
\address{Hanfeng Li,
Center of Mathematics, Chongqing University, Chongqing 401331, China
and
Department of Mathematics, SUNY at Buffalo, Buffalo, NY 14260-2900, USA
}
\email{hfli@math.buffalo.edu}

\date{October 20, 2021}

\begin{abstract}
We prove that if two topologically free and entropy regular actions of countable sofic groups on compact metrizable spaces
are continuously orbit equivalent, and each group either (i) contains a w-normal amenable subgroup which is neither
locally finite nor virtually cyclic, or (ii) is a non-locally-finite product of two infinite groups,
then the actions have the same sofic topological entropy.
This fact is then used to show that
if two free uniquely ergodic and entropy regular probability-measure-preserving actions of such groups
are boundedly orbit equivalent then the actions have the same sofic measure entropy.
Our arguments are based on a
relativization of property SC to sofic approximations and yield more general entropy inequalities.
\end{abstract}

\maketitle

\tableofcontents

\section{Introduction}

At first glance it may seem that dynamical entropy and orbit equivalence should have little
to do with one another. One is a conjugacy invariant that is tailor-made
for the hairsplitting job of distinguishing Bernoulli shifts, all of which have the same spectral theory,
while the other is a coarse relation between group actions
whose tendency to nullify asymptotic behaviour is most devastating in the setting of amenable groups,
where entropy finds its classical home
\cite{Kol58,Kol59,Sin59,Dye59,OrnWei80}.
One registers information while the other threatens to destroy it.

This brutal disparity can however be honed so as to bring the two concepts
into frequent and sometimes surprising alignment.
Indeed entropy turns out to be sensitive in meaningful ways to the various kinds of restrictions that
one may naturally impose on an orbit equivalence, its role as an invariant remaining intact
in some cases but completely neutralized in others.
The history of this relationship
traces back several decades and in its original thrust encompasses
the work of Vershik on actions of locally finite groups \cite{Ver73,Ver95},
the Ornstein isomorphism machinery for Bernoulli shifts \cite{Orn70},
the theory of Kakutani equivalence \cite{Kat77,Fel76,OrnRudWei82,delRud84},
and Kammeyer and Rudolph's general theory of restricted orbit equivalence
for p.m.p.\ actions of countable amenable groups that all of this inspired
\cite{Rud85,KamRud97,KamRud02}
(see Chapter~1 of \cite{KamRud02} for a genealogy).
In a somewhat different vein from these lines of investigation,
Rudolph and Weiss later proved in \cite{RudWei00} that,
for a free p.m.p.\ action of a countable amenable group,
the conditional entropy with respect to a prescribed invariant sub-$\sigma$-algebra $\sS$ is preserved under
every $\sS$-measurable orbit equivalence.
As Rudolph and Weiss demonstrated in the application to
completely positive entropy that motivated their paper,
this crisp expression of complementarity
between entropy and orbit equivalence, when combined with the Ornstein--Weiss theorem \cite{OrnWei80},
turns out to be very useful
as a tool for lifting results from $\Zb$-actions to actions of general countable amenable groups.
More recently Austin has shown, for free p.m.p.\ actions of finitely generated amenable groups,
that entropy is an invariant of bounded and integrable orbit equivalence,
and that there is an entropy scaling formula for stable versions of these equivalences \cite{Aus16}.
It is interesting to note that Austin makes use of both the theory of Kakutani equivalence
(to handle the virtually cyclic case, which his approach requires him to treat separately)
and the Rudolph--Weiss theorem (in a reduction-to-$\Zb$ argument
which, ironically, forms part of the verification of the non-virtually-cyclic case).

The basic geometric idea at play in Austin's work when the group is not virtually cyclic
is the possibility of finding,
within suitable connected F{\o}lner subsets of the group, a connected subgraph which is sparse
but at the same time dense at a specified coarse scale.
By recasting this sparse connectivity as a condition on the action that we called {\it property SC}
and circumventing the ``derandomization'' of \cite{Aus16} with its reliance on the Rudolph--Weiss technique, we established in \cite{KerLi19} the following extension
beyond the amenable setting:
if $G$ is a countable group containing a w-normal amenable subgroup which is
neither locally finite nor virtually cyclic, $H$ is a countable group,
and $G\curvearrowright (X,\mu )$ and $H\curvearrowright (Y,\nu )$ are free
p.m.p.\ actions which are Shannon orbit equivalent (i.e., the cocycle partitions all have finite
Shannon entropy), then the maximum sofic measure entropies of the actions satisfy
\begin{gather}\label{E-SC}
h_\nu (H\curvearrowright Y) \geq h_\mu (G\curvearrowright X).
\end{gather}
One property shared by the groups $G$ in this theorem is that
their first $\ell^2$-Betti number vanishes, which in the nonamenable world can be
roughly intuited as an expression of anti-freeness, and indeed our approach
breaks down for free groups (see Section~3.5 of \cite{KerLi19}).
In what is surely not a coincidence,
groups whose Bernoulli actions are cocycle superrigid
also have vanishing first $\ell^2$-Betti number \cite{PetSin12},
and it has been speculated that these two properties
are equivalent in the nonamenable realm (curiously, however,
Bernoulli cocycle or orbit equivalence superrigidity remains generally
unknown for wreath products of the form $\Zb\wr H$ with $H$ nonamenable,
which satisfy the hypotheses on $G$ above).

Given that nonamenable products of countably infinite groups form a standard class
of examples within the circle of ideas around superrigidity, cost one, and vanishing first $\ell^2$-Betti number,
and in particular are known to satisfy
Bernoulli cocycle superrigidity by a theorem of Popa \cite{Pop08},
it is natural to wonder whether the entropy inequality (\ref{E-SC}) holds if $G$ is
instead assumed to be such a product.
In \cite{KerLi19} we demonstrated, in analogy with Gaboriau's result on cost for
products of equivalence relations \cite{Gab00}, that product actions of non-locally-finite product groups,
when equipped with an arbitrary invariant probability measure, satisfy property SC,
which is sufficient for establishing (\ref{E-SC}).
However, such actions always have maximum sofic entropy zero or $-\infty$.
One of the main questions motivating the present paper is whether one can
remove this product structure hypothesis on the action.

To this end we establish Theorem~\ref{T-measure 1} below, which gives the conclusion for
bounded orbit equivalence (i.e., orbit equivalence with finite cocycle partitions, as explained in Section~\ref{SS-bounded})
and uniquely ergodic actions.
We say that a p.m.p.\ action $G\curvearrowright (X,\mu )$ is {\it uniquely ergodic}
if the only $G$-invariant mean on $L^\infty (X,\mu )$ is integration with respect to $\mu$,
i.e., the induced action of $G$ on the spectrum of $L^\infty (X,\mu )$ is uniquely ergodic
in the usual sense of topological dynamics.
When $\mu$ is atomless, unique ergodicity forces the acting group to be nonamenable \cite[Theorem~2.4]{Sch81}.
In fact an ergodic p.m.p.\ action $G\curvearrowright (X,\mu )$ is uniquely ergodic
if and only if the restriction of the Koopman representation to $L^2 (X,\mu )\ominus \Cb 1$
does not weakly contain the trivial representation \cite[Proposition~2.3]{Sch81}. It follows that if $G$ is nonamenable
then unique ergodicity holds whenever the restriction of the Koopman representation to
the orthogonal complement of the constants is a
direct sum of copies of the left regular representation, and in particular
when the action has completely positive entropy \cite[Corollary~1.2]{Hay18}\cite[Corollary~1.7]{Sew19},
and thus occurs in the following examples:
\begin{enumerate}
\item Bernoulli actions $G\curvearrowright (X^G ,\mu^G )$,
where $(X,\mu )$ is a standard probability space and $(gx)_h = x_{g^{-1}h}$ for
all $g,h\in G$ and $x\in X^G$ (see Section~2.3.1 of \cite{KerLi16}),

\item algebraic actions of the form $G\curvearrowright (\widehat{(\Zb G)^n / (\Zb G)^n A} ,\mu )$
where $A\in M_n (\Zb G)$ is invertible as an operator on $\ell^2 (G)^{\oplus n}$ and $\mu$ is the
normalized Haar measure \cite[Corollary~1.5]{Hay19}.
\end{enumerate}
Moreover, if $G$ has property (T) then all of its ergodic p.m.p.\ actions are uniquely ergodic
\cite[Theorem~2.5]{Sch81}.

As above $\hmax_\mu (\cdot )$ denotes the maximum sofic measure entropy, and we
write $\hinf_\mu (\cdot )$ for the infimum sofic measure entropy
(see Section~\ref{SS-sofic measure entropy}).

\begin{theoremi}\label{T-measure 1}
Let $G$ and $\Gamma$ be countably infinite sofic groups at least one of which is not locally finite,
and let $H$ be a countable group.
Let $G\times\Gamma \curvearrowright (X,\mu )$ and $H\curvearrowright (Y,\nu )$ be
free p.m.p.\ actions which are boundedly orbit equivalent. Suppose that the action of $H$ is
uniquely ergodic. Then
\[
\hmax_\nu (H\curvearrowright Y) \ge \hinf_\mu (G\times\Gamma\curvearrowright X).
\]
\end{theoremi}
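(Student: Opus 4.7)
The plan is to adapt the property-SC approach of \cite{KerLi19} to a form relativized to sofic approximations of $G\times\Gamma$, using non-local-finiteness of one factor in place of the w-normal amenable subgroup hypothesis. Unique ergodicity of the $H$-action will then let us upgrade measure-theoretic bounded orbit equivalence to a setting where the cocycle interacts uniformly with sofic microstates, enabling transport of microstates from the $(G\times\Gamma)$-action to the $H$-action without exponential loss in entropy.

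\textbf{First step: topologization via unique ergodicity.}
I would first pass to topological models. Bounded orbit equivalence of $\Phi : X \to Y$ provides a finite cocycle partition, which admits a zero-dimensional model on which $G\times\Gamma$ acts by homeomorphisms and the cocycle is locally constant; correspondingly, $Y$ admits a model with $H$ acting by homeomorphisms. Unique ergodicity of $H\curvearrowright Y$ then provides uniform convergence of ergodic averages of continuous functions, so that the statistics of cocycle values along $H$-orbits are controlled uniformly in the base point rather than merely almost surely. This uniformity is what permits the cocycle to be applied consistently to sofic microstates.

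\textbf{Second step: relativized property SC.}
Fix a sofic approximation $\sigma=(\sigma_n)$ of $G\times\Gamma$. Non-local-finiteness of one factor, say $G$, supplies a finitely generated infinite subgroup $G_0\le G$ whose images $\sigma_n|_{G_0}$ have many long orbits on $\{1,\dots,d_n\}$. Gluing these orbits together sparsely via $\sigma_n$-images of a fixed element of $\Gamma$ yields, within the sofic Schreier graph of $\sigma_n$, a subgraph that is simultaneously sparse and coarsely dense at a prescribed scale. This is the sofic-relative analogue of the F{\o}lner-geometry construction within an amenable w-normal subgroup used in \cite{KerLi19}; here the product structure allows us to carry it out directly in any sofic model.

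\textbf{Third step and main obstacle.}
Given a sofic approximation $\Sigma$ of $G\times\Gamma$ witnessing $\hinf_\mu (G\times\Gamma\curvearrowright X)$ to within $\eps$, the microstates for the $(G\times\Gamma)$-action on $X$ are converted into microstates for the $H$-action on $Y$ via $\Phi$, with the finite cocycle partition governing how $(G\times\Gamma)$-equivariance is traded for $H$-equivariance. The sparse, coarsely dense subgraph of Step 2 lets one reconstruct each microstate from its values on a small vertex set with only sub-exponential combinatorial overhead, yielding at least as many $H$-microstates as $(G\times\Gamma)$-microstates up to sub-exponential factors, and hence $\hmax_\nu(H\curvearrowright Y)\ge \hinf_\mu(G\times\Gamma\curvearrowright X)$. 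The hardest part is Step 2: controlling simultaneously, and uniformly in $n$, (i) the orbit sizes of $G_0$ under $\sigma_n$, (ii) the sparseness of $\Gamma$-gluings while avoiding short cycles that would inflate entropy counts, and (iii) coarse density at the chosen scale. Once this geometric ingredient is in place, Steps 1 and 3 become adaptations of techniques already developed in \cite{KerLi19}.
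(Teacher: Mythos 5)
Your overall architecture — topologize via unique ergodicity, build a relativized (sofic) sparse‑connectivity property from the product structure, then trade microstates through the cocycle with sub‑exponential loss — matches the paper's route through Lemma~\ref{L-model}, Proposition~\ref{P-product}, Theorem~\ref{T-continuous OE}, and the variational principle. However, there is a genuine gap in your Step~2 that also infects the logic of Step~3.

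You fix an \emph{arbitrary} sofic approximation sequence $\Sigma$ for $G\times\Gamma$ (indeed, one ``witnessing $\hinf_\mu$'') and assert that the $\sigma_n|_{G_0}$-orbits can be sparsely glued by a fixed element of $\Gamma$. But in a general sofic approximation of $G\times\Gamma$ the $G_0$-orbits and the $\Gamma$-images need carry no grid-like compatibility: the $G$-part and $\Gamma$-part are only approximately commuting, and nothing forces a single $\Gamma$-generator to link distinct $G_0$-orbits in a sparse, coarsely dense way. The paper is careful on exactly this point: Proposition~\ref{P-product} establishes property $\sS$-SC for $G\times\Gamma$ only when $\sS$ is the collection of \emph{product} sofic approximations $\pi\times\sigma$, where the $G_0$-orbit/$\Gamma$-edge combinatorics is literally a grid and the construction (via property SC of a product Bernoulli action, Proposition~3.32 of \cite{KerLi19}, transferred by Proposition~\ref{P-SC to sofic SC}) goes through. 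The introduction flags precisely this restriction as the reason $\hinf_\mu$ rather than $\hmax_\mu$ appears in Theorem~\ref{T-measure 1}. Your Step~2, as stated, overclaims and would (if true) yield the stronger conclusion $\hmax_\nu\ge\hmax_\mu$, which the paper does not obtain.

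Consequently the logic of Step~3 should be reversed. You do not want to pick $\Sigma$ witnessing $\hinf_\mu$ (that $\Sigma$ may not be a product and Step~2 may fail for it); instead you pick \emph{any} product sofic approximation sequence $\Pi$ — available because $G$ and $\Gamma$ are sofic — run the machinery to get $\hmax_\nu(H\curvearrowright Y)\ge h_{\Pi,\mu}(G\times\Gamma\curvearrowright X)$, and then conclude with the trivial inequality $h_{\Pi,\mu}\ge\hinf_\mu$. A secondary, smaller point: in Step~1 unique ergodicity is not mainly used to control cocycle statistics along orbits; its essential role in Lemma~\ref{L-model} is to guarantee that one can choose the zero-dimensional model $Z$ so that $H\curvearrowright Z$ is \emph{uniquely ergodic} as a topological system, which is what lets the variational principle convert sofic topological entropy over $Z$ back into sofic measure entropy for $\nu$.
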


Theorem~\ref{T-measure 1} is a direct consequence of Theorem~\ref{T-bounded OE}
and Proposition~\ref{P-product}. When combined with Theorem~A of \cite{KerLi19} it yields the
following Theorem~\ref{T-measure 2}.
We say that an action is {\it entropy regular} if its maximum and infimum sofic entropies are equal,
i.e., the sofic entropy does not depend on the choice of sofic approximation sequence.
Entropy regularity for a p.m.p.\ action
is known to hold in the following situations:
\begin{enumerate}
\item the group is amenable, in which case
the sofic measure entropy is equal to the amenable measure entropy \cite{KerLi13,Bow12a},

\item the action is Bernoulli \cite{Bow10,KerLi11},

\item the action is an algebraic action of the form $G\curvearrowright (\widehat{(\Zb G)^n / (\Zb G)^n A} ,\mu )$
where $A\in M_n (\Zb G)$ is injective as an operator on $\ell^2 (G)^{\oplus n}$ and $\mu$ is the
normalized Haar measure \cite{Hay16},

\item the action is a shift action $G\curvearrowright \{ 1,\dots ,n\}^G$ 
equipped with a Gibbs measure
satisfying one of various uniqueness conditions \cite{Alp15,Alp17}.
\end{enumerate}
For the definition of w-normality see the paragraph before Theorem~\ref{T-w-normal amenable}.

\begin{theoremi}\label{T-measure 2}
Let $G$ and $H$ be countable sofic groups each of which either
\begin{enumerate}
\item contains a w-normal amenable subgroup which is neither locally finite nor virtually cyclic, or

\item is a product of two countably infinite sofic groups at least one of which is not locally finite.
\end{enumerate}
Let $G\curvearrowright (X,\mu )$ and $H\curvearrowright (Y,\nu )$ be free p.m.p.\ actions
which are uniquely ergodic and entropy regular, and suppose that they are boundedly orbit equivalent. Then
\[
h_\nu (H\curvearrowright Y) = h_\mu (G\curvearrowright X).
\]
\end{theoremi}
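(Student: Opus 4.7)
The plan is to derive Theorem~\ref{T-measure 2} as a direct packaging of Theorem~\ref{T-measure 1} (which handles hypothesis (ii)) and Theorem~A of \cite{KerLi19} (which handles hypothesis (i)). Each of these results yields a one-sided entropy inequality; applying the appropriate one in each direction and using entropy regularity to collapse the $\hmax$ versus $\hinf$ distinction should give the desired equality.

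First I would observe that the hypothesis of bounded orbit equivalence is symmetric and implies Shannon orbit equivalence, since a finite-valued cocycle partition automatically has finite Shannon entropy. Consequently, the hypotheses of Theorem~A of \cite{KerLi19} are available with either group in the role of the ``special'' group, and those of Theorem~\ref{T-measure 1} are available with either action in the role of the uniquely ergodic one (both are assumed uniquely ergodic in the present statement).

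Next I would split into cases according to which of (i) or (ii) is satisfied by $G$. If $G$ satisfies (i), Theorem~A of \cite{KerLi19} applied to $G\curvearrowright (X,\mu )$ and $H\curvearrowright (Y,\nu )$ yields $h_\nu (H\curvearrowright Y) \ge h_\mu (G\curvearrowright X)$. If $G$ satisfies (ii), write $G = G_1 \times G_2$ with $G_1, G_2$ countably infinite sofic and at least one not locally finite; then Theorem~\ref{T-measure 1} delivers $\hmax_\nu (H\curvearrowright Y) \ge \hinf_\mu (G\curvearrowright X)$, and entropy regularity of both actions reduces this to $h_\nu (H\curvearrowright Y) \ge h_\mu (G\curvearrowright X)$. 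The reverse inequality follows by running the symmetric argument with the roles of $G$ and $H$ (and of the two actions) interchanged, using the hypothesis that $H$ satisfies (i) or (ii) together with unique ergodicity of the $G$-action.

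There is no serious obstacle here, since the substantive work is already done in Theorem~\ref{T-measure 1} and in \cite{KerLi19}. The only points requiring care are the routine verifications that bounded orbit equivalence is symmetric and implies the Shannon condition needed for Theorem~A of \cite{KerLi19}, that the factorization in (ii) matches the hypotheses of Theorem~\ref{T-measure 1} (countably infinite sofic factors, at least one not locally finite), and that entropy regularity is invoked on \emph{both} actions so as to identify $\hmax$ and $\hinf$ with the single value $h$ on each side.
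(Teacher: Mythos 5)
Your proposal is correct and follows exactly the route the paper takes: the authors state that Theorem~\ref{T-measure 2} follows by combining Theorem~\ref{T-measure 1} (for a group of type (ii)) with Theorem~A of \cite{KerLi19} (for a group of type (i)), using entropy regularity of both actions to identify $\hmax$ and $\hinf$ with $h$ on each side and then running the argument symmetrically to obtain the reverse inequality. Your observations that bounded orbit equivalence is symmetric and that finite cocycle partitions have finite Shannon entropy are the only bookkeeping steps needed, and you have handled them correctly.
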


We note that, by a theorem of Belinskaya \cite{Bel68}, if two ergodic p.m.p.\ $\Zb$-actions are integrably
orbit equivalent, and in particular if they are boundedly orbit equivalent,
then they are measure conjugate up to an automorphism of $\Zb$
(what is referred to as ``flip conjugacy''). On the other hand,
a bounded orbit equivalence between ergodic p.m.p.\ $\Zb^d$-actions for $d\geq 2$
can scramble local asymptotic data to the point of scuttling
properties like mixing and completely positive entropy,
as Fieldsteel and Friedman demonstrated in \cite{FieFri86}.

Our strategy for proving Theorem~\ref{T-measure 1} is to localize
property SC to sofic approximations, yielding what we call ``property sofic SC''
for a group or an action, or more generally ``property $\sS$-SC'' where $\sS$ is a collection of sofic
approximations for the group in question (see Section~\ref{SS-sofic SC}).
The advantage of this localization is that the action itself need not have
a product structure, only the sofic approximation used to model it.
This accounts for the appearance of the infimum sofic entropy
in Theorem~\ref{T-measure 1}, in contrast to (\ref{E-SC}), but as noted
above many actions of interest are known to be entropy regular, in which
case one does in fact get (\ref{E-SC}). The trade-off in using property sofic SC
is its natural and frustratingly stubborn compatibility with the
point-map formulation of
sofic entropy, which is a kind of dualization of the homomorphism picture adopted in \cite{KerLi19}
and requires the choice of a topological model.
This has put us into the situation of not being able to control
the empirical distribution of microstates
except under the hypothesis of unique ergodicity, when the variational principle
makes such control unnecessary for the purpose of computing the entropy,
and even then we have had to restrict the hypothesis on the orbit equivalence from
Shannon to bounded.

Given that we are adhering to the point-map picture
with its use of topological models,
it makes sense to isolate as much of the argument as possible to the purely
topological framework, which also has its own independent interest.
Accordingly we establish the following theorem, the second part of which goes into
proving Theorem~\ref{T-measure 1} via Theorem~\ref{T-bounded OE}.
It is a direct consequence of Theorems~\ref{T-continuous OE} and \ref{T-w-normal amenable} and Proposition~\ref{P-product}.
Here $\hmax (\cdot )$ denotes the maximum sofic topological entropy
and $\hinf (\cdot )$ the infimum sofic topological entropy
(see Section~\ref{SS-sofic topological entropy}).

\begin{theoremi}\label{T-cts 1}
Let $G\curvearrowright X$ and $H\curvearrowright Y$ be
topologically free continuous actions of countable sofic groups on compact metrizable spaces, and
suppose that they are continuously orbit equivalent.
If $G$ contains a w-normal amenable subgroup which is neither locally finite nor virtually cyclic then
\[
\hmax (H\curvearrowright Y) \ge \hmax (G\curvearrowright X),
\]
while if $G$ is a product of two countably infinite groups at least one of which is not locally finite then
\[
\hmax (H\curvearrowright Y) \ge \hinf (G\curvearrowright X).
\]
\end{theoremi}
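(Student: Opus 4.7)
The plan is to split the argument into a general continuous entropy inequality conditional on a localized form of property SC, followed by two verifications of that property. Concretely, following the paper's indication, I would introduce property $\sS$-SC for a continuous action of $G$ on $X$ by relativizing the sparse-connectivity condition of \cite{KerLi19} to the vertex sets of sofic approximations drawn from a specified family $\sS$, rather than to F{\o}lner subsets of $G$. As in \cite{Aus16} and \cite{KerLi19}, the two geometric ingredients to demand from $\sS$-SC are that the connecting subgraph be sparse enough to contribute only $\exp(o(|V|))$ extra information and that it still be dense at a prescribed coarse scale, so that filling in the unlabelled portion of $V$ arbitrarily introduces only a controllable metric error. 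Theorem \ref{T-cts 1} then follows by combining three inputs whose rough shapes I describe next.

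For the general inequality (Theorem \ref{T-continuous OE}), the strategy is to push microstates for $G\curvearrowright X$ forward through the continuous orbit equivalence $\Phi\colon X\to Y$ to obtain microstates for $H\curvearrowright Y$. Continuity of $\Phi$ makes the cocycles $\alpha\colon G\times X\to H$ and $\beta\colon H\times Y\to G$ locally constant along any finite generating set, so each $\alpha(g,\cdot)$ is determined by a fixed finite partition $\sP$ of $X$. Given a $(\rho,\eps)$-microstate $\varphi\colon V\to X$ for a sofic approximation $\sigma\colon G\to\Sym(V)$ of the type permitted by $\sS$, I would read off the induced $\sP$-labelling of $V$ along the sparse subgraph $E\subset V$ supplied by $\sS$-SC and use it to define a partial sofic approximation $\tau$ for $H$ on $E$; filling in $\tau$ and the candidate point map $\psi\colon V\to Y$ arbitrarily off $E$ keeps $\psi$ close in $\rho$-average to $\Phi\circ\varphi$ thanks to the coarse-denseness of $E$, so $\psi$ remains a microstate for $H\curvearrowright Y$ in a compatible metric. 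A counting argument then bounds the number of source triples producing the same target $\psi$ by the exponential of the entropy of the sparse labelling, which is $\exp(o(|V|))$ by the sparseness clause, giving $\hmax(H\curvearrowright Y)\ge h_\sS(G\curvearrowright X)$.

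The two cases of Theorem \ref{T-cts 1} then reduce to two verifications of $\sS$-SC for $G\curvearrowright X$. In case (i), Theorem \ref{T-w-normal amenable} would assert that the presence of a w-normal, non-locally-finite, non-virtually-cyclic amenable subgroup $A\le G$ yields $\sS$-SC for $\sS$ equal to the collection of \emph{all} sofic approximations of $G$, by transplanting the sparse connector from \cite{KerLi19} into arbitrary sofic approximations via amenable quasi-tiling applied to $A$; this preserves the full $\hmax$ on the right-hand side. In case (ii), Proposition \ref{P-product} would instead produce $\sS$-SC only for the specific sequence of sofic approximations of $G=G_1\times G_2$ built as tensor products of sofic approximations of $G_1$ and $G_2$, with the sparse connector supplied by the non-locally-finite factor; since $\sS$-SC is available only along that sequence, only the infimum sofic entropy can be controlled on the right-hand side, which is exactly the asymmetry appearing in the statement.

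The main obstacle is the passage from the homomorphism-style microstates used in \cite{KerLi19} to the point-map microstates that the topological framework requires. In the point-map picture one loses the direct bookkeeping of the empirical distribution of $\varphi$, so it is not automatic that the transferred $\psi$ is a microstate for any specific $H$-invariant measure on $Y$; this is the fundamental reason that the localization to $\sS$-SC is essential, allowing the sparse connector to be constructed on the very combinatorial object that carries the microstate, and also why, in the measure-theoretic companion Theorem \ref{T-measure 1}, the orbit equivalence must be strengthened to bounded and the $H$-action taken to be uniquely ergodic so that the variational principle sidesteps the need to prescribe a target measure.
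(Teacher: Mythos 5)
Your proposal matches the paper's approach: Theorem~\ref{T-cts 1} is obtained exactly by combining a general $\sS$-SC-conditional entropy inequality under continuous orbit equivalence (Theorem~\ref{T-continuous OE}), property sofic SC for all sofic approximations in the w-normal amenable case (Theorem~\ref{T-w-normal amenable}, giving $\hmax$), and property $\sS$-SC only for product sofic approximations in the product case (Proposition~\ref{P-product}, giving $\hinf$), and your sketch of the transfer-of-microstates argument captures the essential mechanism. The only small discrepancy is in the internals of the w-normal case, which the paper routes through property SC in \cite{KerLi19} and the implication SC $\Rightarrow$ sofic SC (Theorem~\ref{T-compare SC}) rather than a direct quasi-tiling argument on sofic approximations, but this does not affect the structure of the proof of Theorem~\ref{T-cts 1} itself.
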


If the actions of $G$ and $H$ above are genuinely free or if $G$ and $H$ are torsion-free,
then using the variational
principle (Theorem~10.35 in \cite{KerLi16}) and (in the case of torsion-free $G$ and $H$)
the main result of \cite{Mey16} one can also derive the first part of the above theorem
from Theorem~A of \cite{KerLi19},
or from \cite{Aus16} if $G$ and $H$ are in addition amenable and finitely generated.

In parallel with the p.m.p.\ setting, we define a continuous action of a countable sofic group
on a compact metrizable space to be {\it entropy regular}
if its maximum and infimum sofic topological entropies are equal,
and note that this occurs in the following situations:
\begin{enumerate}
\item the group is amenable, in which case
the sofic topological entropy is equal to the amenable topological entropy \cite{KerLi13},

\item the action is a shift action $G\curvearrowright X^G$
where $X$ is a compact metrizable space
\cite[Proposition~10.28]{KerLi16},

\item the action
is an algebraic action of the form $G\curvearrowright \widehat{(\Zb G)^n / (\Zb G)^n A}$
where $A\in M_n (\Zb G)$ is injective as an operator on $\ell^2 (G)^{\oplus n}$ \cite{Hay16}.
\end{enumerate}
From Theorem~\ref{T-cts 1} we immediatety obtain:

\begin{theoremi}\label{T-cts 2}
Let $G$ and $H$ be countable sofic groups each of which either
\begin{enumerate}
\item[(a)] contains a w-normal amenable subgroup which is neither locally finite nor virtually cyclic, or

\item[(b)] is a product of two countably infinite sofic groups at least one of which is not locally finite.
\end{enumerate}
Let $G\curvearrowright X$ and $H\curvearrowright Y$ be
topologically free and entropy regular continuous actions on compact metrizable spaces, and
suppose that they are continuously orbit equivalent. Then
\[
h(H\curvearrowright Y) = h(G\curvearrowright X).
\]
\end{theoremi}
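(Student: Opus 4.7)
The plan is to deduce the equality directly from Theorem~\ref{T-cts 1} by applying that result in both directions and invoking entropy regularity to collapse the apparent asymmetry between maximum and infimum sofic entropy. Since continuous orbit equivalence is a symmetric relation between the two actions, topological freeness is symmetric, and both $G$ and $H$ are assumed to satisfy one of the hypotheses (a) or (b), Theorem~\ref{T-cts 1} is available with either action playing the role of the source and the other playing the role of the target.

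First I would apply Theorem~\ref{T-cts 1} in the forward direction, treating $G\curvearrowright X$ as the source. If $G$ satisfies (a) this yields $\hmax(H\curvearrowright Y)\ge\hmax(G\curvearrowright X)$, while if $G$ satisfies (b) it yields the a priori weaker inequality $\hmax(H\curvearrowright Y)\ge\hinf(G\curvearrowright X)$. Because the $G$-action is entropy regular, $\hmax(G\curvearrowright X)=\hinf(G\curvearrowright X)=h(G\curvearrowright X)$, and similarly $\hmax(H\curvearrowright Y)=h(H\curvearrowright Y)$ by entropy regularity of the $H$-action. So in either case the inequality simplifies to $h(H\curvearrowright Y)\ge h(G\curvearrowright X)$.

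Next I would apply Theorem~\ref{T-cts 1} in the reverse direction, with $H\curvearrowright Y$ as the source and $G\curvearrowright X$ as the target; this is permitted because $H$ also satisfies (a) or (b), both actions are topologically free, and continuous orbit equivalence is symmetric. The same reasoning, combined with entropy regularity of both actions, gives $h(G\curvearrowright X)\ge h(H\curvearrowright Y)$. Combining the two inequalities yields the desired equality.

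There is essentially no obstacle here, since the substantive work has been carried out in Theorem~\ref{T-cts 1}. The only conceptual point to note is that entropy regularity is exactly the hypothesis that absorbs the $\hmax/\hinf$ discrepancy which arises in case (b) of Theorem~\ref{T-cts 1} from the use of property sofic SC rather than property SC; without this hypothesis the product-group case would yield only a one-sided bound involving $\hinf(G\curvearrowright X)$, which cannot be converted to an equality by the symmetric argument alone.
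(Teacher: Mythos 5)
Your proposal is correct and matches the paper's intent exactly: the paper introduces Theorem~\ref{T-cts 2} with the remark ``From Theorem~\ref{T-cts 1} we immediately obtain,'' which is precisely the symmetric double application of Theorem~\ref{T-cts 1} plus entropy regularity that you spell out. Your final paragraph correctly identifies the role of entropy regularity in collapsing the $\hmax$/$\hinf$ gap in case (b), which is the only non-obvious point.
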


It was shown in \cite{ChuJia17,Coh18} that the finite-base shift actions
of a finitely generated group satisfy continuous cocycle superrigidity if and only if the group has one end
(a property that the groups in Theorem~\ref{T-cts 2} possess when they are finitely generated---see Example~1 in \cite{ChuJia17}).
As observed in \cite{ChuJia17}, this implies, in conjunction with a theorem from \cite{Li18},
that if a finitely generated group is torsion-free and amenable
then each of its shift actions with finite base is continuous orbit equivalence superrigid.
Whether such
superrigidity ever occurs in the nonamenable setting appears however to be unknown.

We begin the main body of the paper in Section~\ref{S-preliminaries} by setting up
general notation and reviewing terminology concerning continuous and bounded
orbit equivalence and sofic entropy.
In Section~\ref{SS-sofic SC} we define properties $\sS$-SC and sofic SC
for groups, p.m.p.\ actions, and continuous actions on compact metrizable spaces.
In Section~\ref{SS-without} we determine that a countable group fails
to have property sofic SC if it is locally finite or finitely generated and virtually free.
In Section~\ref{SS-SC} we verify that, for free p.m.p.\ actions, property SC implies
property sofic SC, and then use this in conjunction with \cite{KerLi19}
to show that (i) for countable amenable groups property sofic SC is equivalent to
the group being neither locally finite nor virtually cyclic,
and (ii) if a countable group has a w-normal subgroup which is amenable
but neither locally finite nor virtually cyclic then the group has property sofic SC.
In Section~\ref{SS-normal} we prove that if a w-normal subgroup has property sofic SC
then so does the ambient group, while
in Section~\ref{SS-products} we determine that the product of two countably infinite groups
has property $\sS$-SC, where $\sS$ is the collection of product sofic approximations,
if and only if at least one of the factors is not locally finite.
Sections~\ref{SS-invariant cts} and \ref{SS-invariant bounded} show property sofic SC to be an invariant
of continuous orbit equivalence for topologically free continuous actions
on compact metrizable spaces and of bounded orbit equivalence for free p.m.p.\ actions.
Section~\ref{S-continuous OE} is devoted to the proof
of Theorem~\ref{T-continuous OE}, which together with
Theorem~\ref{T-w-normal amenable} and Proposition~\ref{P-product} gives Theorem~\ref{T-cts 1}.
Finally, in Section~\ref{S-bounded OE} we establish Theorem~\ref{T-bounded OE},
which together with Proposition~\ref{P-product} yields Theorem~\ref{T-measure 1}.

\medskip

\noindent{\it Acknowledgements.}
The first author was partially supported by NSF grant DMS-1800633 and was affiliated with
Texas A\&M University during the course of the work.
Preliminary stages were carried out during his six-month stay at the ENS de Lyon in 2017-2018,
during which time he held ENS and CNRS visiting professorships and was supported by Labex MILYON/ANR-10-LABX-0070. 
The second author was partially supported by NSF grants DMS-1600717 and DMS-1900746.
We thank the referee for helpful comments and suggestions.

\section{Preliminaries}\label{S-preliminaries}

\subsection{Basic notation and terminology}

Throughout the paper $G$ and $H$ are countable discrete groups,
with identity elements $e_G$ and $e_H$.
We write $\cF(G)$ for the collection of all nonempty
finite subsets of $G$, and $\overline{\cF}(G)$ for the collection
of symmetric finite subsets of $G$ containing $e_G$.
For a nonempty finite set $V$, the algebra of all subsets of $V$ is denoted by $\Pb_V$,
the group of all permutations of $V$ by $\Sym (V)$,
and the uniform probability measure on $V$ by $\m$.

Given a property P, a group is said to be {\it virtually P}
if it has a subgroup of finite index with property P, and {\it locally P} if each of its
finitely generated subgroups has property P.

A {\it standard probability space} is a standard Borel space (i.e., a Polish space with its Borel $\sigma$-algebra)
equipped with a probability measure. Partitions of such a space are always understood to be Borel.
A {\it p.m.p.\ (probability-measure preserving) action} of $G$ is an action $G\curvearrowright (X,\mu )$
of $G$ on a standard probability space by measure-preserving transformations.
Such an action is {\it free} if the
set $X_0$ of all $x\in X$ such that $sx \neq x$ for all $s\in G\setminus \{ e_G \}$ has measure one.
Two p.m.p.\ actions $G\curvearrowright (X,\mu )$ and $G\curvearrowright (Y,\nu )$
are {\it measure conjugate} if there exist $G$-invariant conull sets
$X_0\subseteq X$ and $Y_0\subseteq Y$
and a $G$-equivariant measure isomorphism $X_0 \to Y_0$.

A continuous action $G\curvearrowright X$ on a compact metrizable space is
said to be {\it topologically free}
if the $G_\delta$ set of all $x\in X$ such that $sx \neq x$ for all $s\in G\setminus \{ e_G \}$ is dense.
It is {\it uniquely ergodic} if there is a unique $G$-invariant Borel probability measure on $X$.
By the Riesz representation theorem this is equivalent to the existence of a unique $G$-invariant state
(i.e., unital positive linear functional)
for the induced action of $G$ on the C$^*$-algebra $C(X)$ of continuous functions on $X$
given by $(gf)(x) = f(g^{-1} x)$ for all $g\in G$, $f\in C(X)$, and $x\in X$.

A p.m.p.\ action $G\curvearrowright (X,\mu )$ is {\it uniquely ergodic} if there is a unique state
(or {\it mean} as it is also called in this setting) on $L^\infty (X,\mu )$ which is invariant for the action of $G$ given by
$(gf)(x) = f(g^{-1} x)$ for all $g\in G$, $f\in L^\infty (X,\mu )$, and $x\in X$.
By Gelfand theory, this is equivalent to the unique ergodicity,
in the topological-dynamical sense above, of the induced action of $G$ on the spectrum of $L^\infty (X,\mu )$.

\subsection{Continuous orbit equivalence}

We say that two continuous actions $G\curvearrowright X$ and $H\curvearrowright Y$ on compact metrizable spaces
are {\it continuously orbit equivalent} if there exist a homeomorphism $\Phi : X\to Y$ and
continuous maps $\kappa : G\times X \to H$ and $\lambda : H\times Y \to G$ such that
\begin{align*}
\Phi (gx) &= \kappa (g,x) \Phi (x) ,\\
\Phi^{-1} (ty) &= \lambda (t,y) \Phi^{-1} (y)
\end{align*}
for all $g\in G$, $x\in X$, $t\in H$, and $y\in Y$.
Such a $\Phi$ is called a {\it continuous orbit equivalence}.

If the action $H\curvearrowright Y$ is topologically free
then the continuity of $\Phi$ implies
that the map $\kappa$ is uniquely determined by the first line of the above display and satisfies
the {\it cocycle identity}
\begin{align*}
\kappa (fg,x) = \kappa (f,gx) \kappa (g,x)
\end{align*}
for $f,g\in G$ and $x\in X$. In the case that both $G\curvearrowright X$ and $H\curvearrowright Y$
are topologically free we have
\begin{align*}
\lambda (\kappa (g,x),\Phi (x)) = g
\end{align*}
for all $g\in G$ and $x\in X$, and $\lambda$ is uniquely determined by this identity.

\subsection{Bounded orbit equivalence}\label{SS-bounded}

Two free p.m.p.\ actions $G\curvearrowright (X,\mu )$ and
$H\curvearrowright (Y,\nu )$
are {\it orbit equivalent} if there exist a $G$-invariant conull set $X_0 \subseteq X$,
an $H$-invariant conull set $Y_0 \subseteq Y$, and a measure isomorphism
$\Psi : X_0 \to Y_0$ such that $\Psi (Gx) = H\Psi (x)$ for all $x\in X_0$.
Such a $\Psi$ is called an {\it orbit equivalence}.
Associated to $\Psi$ are the cocycles
$\kappa : G\times X_0 \to H$ and $\lambda : H\times Y_0 \to G$ determined (up to null sets,
in accord with our definition of freeness) by
\begin{align*}
\Psi (gx) &= \kappa (g,x)\Psi (x) , \\
\Psi^{-1} (ty) &= \lambda (t,y) \Psi^{-1} (y)
\end{align*}
for all $g\in G$, $x\in X_0$, $t\in H$, and $y\in Y_0$.
We say that the cocycle $\kappa$ is {\it bounded} if $\kappa (g,X_0 )$ is finite
for every $g\in G$, and define boundedness for $\lambda$ likewise.
If $X_0$, $Y_0$, and $\Psi$ can be chosen so that $\kappa$ and $\lambda$ are both
bounded, then we say that the actions are {\it boundedly orbit equivalent}, and refer to $\Psi$ as a
{\it bounded orbit equivalence}.

\subsection{Sofic approximations}

Given a nonempty finite set $V$ we define on $V^V$ the normalized Hamming distance
\[
\rho_\Hamm (T,S) = \frac{1}{|V|} |\{ v\in V : Tv \neq Sv \} | .
\]
A {\it sofic approximation} for $G$ is a (not necessarily multiplicative) map
$\sigma : G\to\Sym (V)$ for some nonempty finite set $V$.
Given a finite set $F\subseteq G$ and a $\delta > 0$, we say that such a $\sigma$
is an {\it $(F,\delta )$-approximation} if
\begin{enumerate}
\item $\rho_\Hamm (\sigma_{st} , \sigma_s \sigma_t ) \leq \delta$ for all $s,t\in F$, and

\item $\rho_\Hamm (\sigma_s , \sigma_t ) \geq 1-\delta$ for all distinct $s,t\in F$.
\end{enumerate}
A {\it sofic approximation sequence} for $G$ is a sequence
$\Sigma = \{ \sigma_k : G\to\Sym (V_k ) \}_{k=1}^\infty$
of sofic approximations for $G$ such that
for every finite set $F\subseteq G$ and $\delta > 0$ there exists a $k_0 \in\Nb$ such that
$\sigma_k$ is an $(F,\delta )$-approximation for every $k\geq k_0$.
A sofic approximation $\sigma : G\to\Sym (V)$ is said to be {\it good enough} if
it is an $(F,\delta )$-approximation for some finite set $F\subseteq G$ and $\delta > 0$
and this condition is sufficient for the purpose at hand.

The group $G$ is {\it sofic} if it admits a sofic approximation sequence,
which is the case for instance if $G$ is amenable or residually finite.
It is not known whether nonsofic groups exist.

Given a sofic approximation $\sigma : G\to\Sym (V)$ and a set $A\subseteq G$, we define
an {\it $A$-path} to be a finite tuple $(v_0 , v_1 , \dots , v_n )$ of points in $V$ such that for every
$i=1,\dots , n$ there is a $g\in A$ for which $v_i = \sigma_g v_{i-1}$. The integer $n$
is the {\it length} of the path, the points $v_0 , \dots , v_n$ its {\it vertices},
and $v_0$ and $v_n$ its {\it endpoints}.
When $n=1$ we also speak of an {\it $A$-edge}.
For $r\in\Nb$, we say that a set $W\subseteq V$ is {\it $(A,r)$-separated}
if $\sigma_{A^r} v \cap \sigma_{A^r} w = \emptyset$ for all distinct $v,w\in W$.

\subsection{Sofic topological entropy}\label{SS-sofic topological entropy}

Let $G\curvearrowright X$ be a continuous action on a compact metrizable space.
Let $d$ be a compatible metric on $X$. Let $F$ be a finite subset of $G$ and $\delta > 0$.
Let $\sigma : G\to \Sym (V)$ be a sofic approximation for $G$.
On the set of maps $V \to X$ define the pseudometrics
\begin{align*}
d_2 (\varphi ,\psi ) &= \bigg( \frac{1}{|V|} \sum_{v\in V} d(\varphi (v),\psi (v))^2 \bigg)^{1/2} , \\
d_\infty (\varphi ,\psi ) &= \max_{v\in V} d(\varphi (v),\psi (v)) .
\end{align*}
Define $\Map_d (F,\delta ,\sigma )$ to be the set of all maps $\varphi : V\to X$
such that $d_2 (\varphi \sigma_g,g\varphi ) \leq\delta$ for all $g\in F$.
For a pseudometric space $(\Omega ,\rho )$ and $\eps > 0$ we write $N_\eps (\Omega ,\rho )$
for the maximum cardinality of a subset $\Omega_0$ of $\Omega$ which is $(\rho ,\eps )$-separated
in the sense that $\rho (\omega_1 ,\omega_2 ) \geq \eps$ for all distinct $\omega_1 ,\omega_2 \in \Omega_0$.

Let $\Sigma = \{ \sigma_k : G\to\Sym (V_k ) \}_{k=1}^\infty$ be a sofic
approximation sequence for $G$. For $\eps > 0$ we set
\begin{align*}
h_{\Sigma ,\infty}^\eps (G\curvearrowright X) &= \inf_F \inf_{\delta > 0}
\limsup_{k\to\infty} \frac{1}{|V_k|} \log N_\eps (\Map_d (F,\delta ,\sigma_k ) , d_\infty ) ,\\
h_{\Sigma ,2}^\eps (G\curvearrowright X) &= \inf_F \inf_{\delta > 0}
\limsup_{k\to\infty} \frac{1}{|V_k|} \log N_\eps (\Map_d (F,\delta ,\sigma_k ) , d_2 ) ,
\end{align*}
where the first infimum in each case is over all finite sets $F\subseteq G$.
The {\it sofic topological entropy} of the action $G\curvearrowright X$ with respect to $\Sigma$
is then defined by
\begin{align*}
h_\Sigma (G\curvearrowright X) = \sup_{\eps > 0} h_{\Sigma ,\infty}^\eps (G\curvearrowright X) .
\end{align*}
This quantity does not depend on the choice of compatible metric $d$, as is readily seen, and
by Proposition~10.23 of \cite{KerLi16} we can also compute it using separation with respect to $d_2$, i.e.,
\begin{align*}
h_\Sigma (G\curvearrowright X) = \sup_{\eps > 0} h_{\Sigma ,2}^\eps (G\curvearrowright X) .
\end{align*}

We define the {\it maximum} and {\it infimum sofic topological entropies} of $G\curvearrowright X$ by
\begin{align*}
\hmax (G\curvearrowright X) &= \max_\Sigma h_\Sigma (G\curvearrowright X) ,\\
\hinf (G\curvearrowright X) &= \inf_\Sigma h_\Sigma (G\curvearrowright X) ,
\end{align*}
where $\Sigma$ ranges in each case over all sofic approximation sequences for $G$
(when $G$ is nonsofic we interpret these quantities to be $-\infty$).
It is a straightforward
exercise to show that the maximum does indeed exist (we do not know however whether the infimum is always realized).
Note that $-\infty$ is a possible value for
$h_\Sigma (G\curvearrowright X)$, and so if it occurs for some $\Sigma$ then
$\hinf (G\curvearrowright X) = -\infty$, and if it occurs for all $\Sigma$ then $\hmax (G\curvearrowright X) = -\infty$.
The action $G\curvearrowright X$ is {\it entropy regular}
if its maximum and infimum sofic topological entropies are equal, i.e., the sofic topological entropy
does not depend on the choice of sofic approximation sequence.

\subsection{Sofic measure entropy}\label{SS-sofic measure entropy}

Let $G\curvearrowright (X,\mu )$ be a p.m.p.\ action.
Let $\sC$ be a finite Borel partition of $X$, $F$ a finite subset of $G$ containing $e_G$, and $\delta > 0$.
Write $\alg (\sC )$ for the algebra generated by $\sC$, which consists of all unions of members of $\sC$,
and write $\sC_F$ for the join $\bigvee_{s\in F} s\sC$.
Let $\sigma : G\to \Sym(V)$ be a sofic approximation for $G$. Write $\Hom_\mu (\sC ,F,\delta ,\sigma )$ for the
set of all homomorphisms $\varphi : \alg (\sC_F )\to \Pb_V$ satisfying
\begin{enumerate}
\item $\sum_{A\in\sC} \m (\sigma_g \varphi (A) \Delta \varphi (gA)) < \delta$ for all $g\in F$, and

\item $\sum_{A\in\sC_F} |\m (\varphi (A)) - \mu (A)| < \delta$.
\end{enumerate}
For a finite Borel partition $\sP\leq\sC$ we write $|\Hom_\mu (\sC ,F,\delta ,\sigma )|_\sP$ for the cardinality
of the set of restrictions of elements of $\Hom_\mu (\sC ,F,\delta ,\sigma )$ to $\sP$.

Given a sofic approximation sequence
$\Sigma = \{ \sigma_k : G\to\Sym (V_k ) \}_{k=1}^\infty$ for $G$, we define
the {\it sofic measure entropy} of the action $G\curvearrowright (X,\mu )$ with respect to $\Sigma$ by
\begin{align*}
h_{\Sigma ,\mu} (G\curvearrowright X) = \sup_\sP \inf_{\sC\geq\sP} \inf_F \inf_{\delta > 0}
\limsup_{k\to\infty} \frac{1}{|V_k|} \log |\Hom_\mu (\sC ,F,\delta ,\sigma_k ) |_\sP ,
\end{align*}
where the supremum is over all finite partitions $\sP$ of $X$,
the first infimum is over all finite partitions $\sC$ of $X$ refining $\sP$, and the second infimum
is over all finite sets $F\subseteq G$ containing $e_G$.

As in the topological case, one can check that there is a maximum among the
quantities $h_{\Sigma ,\mu} (G\curvearrowright X)$ over all sofic approximation sequences $\Sigma$ for $G$,
where $-\infty$ is included as a possible value.
The {\it maximum} and {\it infimum sofic measure entropies} of $G\curvearrowright (X,\mu )$ are then defined by
\begin{align*}
\hmax_\mu (G\curvearrowright X) &= \max_\Sigma h_{\Sigma ,\mu} (G\curvearrowright X) , \\
\hinf_\mu (G\curvearrowright X) &= \inf_\Sigma h_{\Sigma ,\mu} (G\curvearrowright X) ,
\end{align*}
where $\Sigma$ ranges in each case over all sofic approximation sequences for $G$.
When $G$ is nonsofic these quantities are interpreted to be $-\infty$.
The action is {\it entropy regular}
if its maximum and infimum sofic measure entropies are equal, i.e., the sofic measure entropy
does not depend on the choice of sofic approximation sequence.

\section{Properties $\sS$-SC and sofic SC} \label{S-sofic SC}

\subsection{Definitions of properties $\sS$-SC and sofic SC}\label{SS-sofic SC}

Write $\sS_G$ for the collection of all sofic approximations for $G$.

Let $\sS$ be any collection of sofic approximations for $G$.

\begin{definition}\label{D-CSC 1}
We say that the group $G$ has {\it property $\sS$-SC} (or {\it property sofic SC} if $\sS = \sS_G$)
if for any function $\Upsilon: \cF(G)\rightarrow [0, \infty)$
there exists an  $S\in \overline{\cF}(G)$ such that for any  $T\in \overline{\cF}(G)$ there are $C, n\in \Nb$,
and $S_1, \dots, S_n\in \overline{\cF}(G)$ such that for every good enough sofic approximation
$\pi: G\rightarrow \Sym(V)$ in $\sS$ there are subsets $W$ and $\cV_j$ of $V$ for $1\le j\le n$
satisfying the following conditions:
\begin{enumerate}
\item $\sum_{j=1}^n \Upsilon(S_j)\m(\cV_j)\le 1$,
\item  $\bigcup_{g\in S}\pi_gW=V$,
\item if $w_1, w_2\in W$ satisfy $\pi_gw_1=w_2$ for some $g\in T$ then $w_1$ and $w_2$ are connected by a path of length at most $C$ in which each edge is of the form $(v, \pi_hv)$ for some $1\le j\le n$, $h\in S_j$, and
$v\in \cV_j$ with $\pi_hv\in \cV_j$.
\end{enumerate}
\end{definition}

\begin{definition}\label{D-CSC 2}
We say that a continuous action $G\curvearrowright X$ on a compact metrizable space $X$ with compatible metric $d$ has
{\it property $\sS$-SC} (or {\it property sofic SC} if $\sS = \sS_G$) if for any function $\Upsilon: \cF(G)\rightarrow [0, \infty)$ there exists an  $S\in \overline{\cF}(G)$ such that for any $T\in \overline{\cF}(G)$ there are $C, n\in \Nb$,  $S_1, \dots, S_n\in \overline{\cF}(G)$, $F^\sharp\in \cF(G)$, and $\delta^\sharp>0$ such that for every good enough sofic approximation $\pi: G\rightarrow \Sym(V)$ in $\sS$ with $\Map_d(F^\sharp, \delta^\sharp, \pi)\neq \emptyset$ there are $W$ and $\cV_j$ for $1\le j\le n$ as in Definition~\ref{D-CSC 1}. By \cite[Lemma 10.24]{KerLi16} this does not depend on the choice of $d$.
\end{definition}

\begin{definition}\label{D-CSC 3}
We say that a p.m.p.\ action $G\curvearrowright (X,\mu )$ has {\it property $\sS$-SC}
(or {\it property sofic SC} if $\sS = \sS_G$)
if for any function $\Upsilon: \cF(G)\rightarrow [0, \infty)$ there exists an $S\in \overline{\cF}(G)$  such that for any  $T\in \overline{\cF}(G)$ there are $C, n\in \Nb$,  $S_1, \dots, S_n\in \overline{\cF}(G)$, a finite Borel partition $\sC^\sharp$ of $X$, an $F^\sharp\in \cF(G)$ containing $e_G$, and a $\delta^\sharp>0$ such that for every good enough sofic approximation $\pi: G\rightarrow \Sym(V)$ in $\sS$ with $\Hom_\mu(\sC^\sharp, F^\sharp, \delta^\sharp, \pi)\neq \emptyset$ there are $W$ and $\cV_j$ for $1\le j\le n$ as in Definition~\ref{D-CSC 1}.
\end{definition}

The following proposition shows that, when $G$ is finitely generated, in Definition~\ref{D-CSC 1}
we can fix $n=1$ and take $S_1$ to be any symmetric finite generating subset of $G$ containing $e_G$, but with the price that $\bigcup_{g\in S} \pi_g W$ is only most of $V$ instead of the whole of $V$.

\begin{proposition} \label{P-sofic SC for fg}
Suppose that $G$ is finitely generated. Let $A$ be a generating set for $G$ in $\overline{\cF}(G)$. Then $G$ has property
$\sS$-SC if and only if for any $\varepsilon>0$ there exists an  $S\in \overline{\cF}(G)$  such that for any $T\in \overline{\cF}(G)$ and $\delta>0$ there is  a $C\in \Nb$ such that for any good enough sofic approximation $\pi: G\rightarrow \Sym(V)$
in $\sS$ there are subsets $W$ and $\cV$ of $V$ satisfying the following conditions:
\begin{enumerate}
\item $\m(\cV)\le \varepsilon $,
\item $\m(\bigcup_{g\in S}\pi_gW)\ge 1-\delta$,
\item if $w_1, w_2\in W$ satisfy $\pi_gw_1=w_2$ for some $g\in T$ then $w_1$ and $w_2$ are connected by an $A$-path of length at most $C$ whose vertices all lie in $\cV$.
\end{enumerate}
\end{proposition}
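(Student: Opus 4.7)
The plan is to prove the two implications separately. The simpler condition differs from property $\sS$-SC in three respects: it fixes $n=1$ and $S_1=A$, replaces the weighted sum $\sum_j\Upsilon(S_j)\m(\cV_j)\le 1$ with the single mass bound $\m(\cV)\le\varepsilon$, and loosens $\bigcup_{g\in S}\pi_g W=V$ to $\m(\bigcup_{g\in S}\pi_g W)\geq 1-\delta$. The first and second differences balance against each other, while the third is what permits the conversion of general $h$-edges into $A$-paths.

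For the implication from the simpler condition to property $\sS$-SC, fix $\Upsilon\colon\cF(G)\to[0,\infty)$. Apply the simpler condition with $\varepsilon_0:=1/(2\Upsilon(A))$ (or $\varepsilon_0:=1$ if $\Upsilon(A)=0$) to obtain $S$. Given $T\in\overline{\cF}(G)$, apply it again with $\delta_0:=1/(2|T|\Upsilon(T))$ (with the obvious adjustment if $\Upsilon(T)=0$) to obtain $C_0$. For a good enough $\pi$ this yields $W_0$ and $\cV_0$; set $M:=V\setminus\bigcup_{g\in S}\pi_g W_0$, and declare $n:=2$, $S_1:=A$, $S_2:=T$, $W:=W_0\cup M$, $\cV_1:=\cV_0$, $\cV_2:=\pi_T M$, and $C:=\max(C_0,1)$. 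Since $\pi$ acts by permutations, $\m(\cV_2)\le|T|\delta_0$, so the weighted sum is at most $1/2+1/2=1$, and $\bigcup_{g\in S}\pi_g W=V$ since $M\subseteq W$. For the path condition, two points of $W_0$ are handled by the original $A$-path in $\cV_1$, while if one of $w_1,w_2$ lies in $M$ then symmetry of $T$ forces both into $\pi_T M=\cV_2$, and the single direct $T$-edge suffices.

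For the reverse implication, fix $\varepsilon>0$ and define $\Upsilon(Q):=(2/\varepsilon)|Q|(M(Q)+1)$ with $M(Q):=\max_{h\in Q}|h|_A$, then apply property $\sS$-SC to obtain $S$. Given $T$ and $\delta>0$, property $\sS$-SC supplies $C_0,n,S_1,\dots,S_n$. Fix for each $h\in\bigcup_j S_j$ a word $h=a_{h,\ell(h)}\cdots a_{h,1}$ with $\ell(h)=|h|_A\le M:=\max_j M(S_j)$, and put
\[
\tilde\cV:=\bigcup_{j=1}^{n}\bigcup_{h\in S_j}\bigcup_{k=0}^{\ell(h)}(\pi_{a_{h,k}}\cdots\pi_{a_{h,1}})(\cV_j),
\]
where the empty composition at $k=0$ is the identity. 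Each term is a permutation image of some $\cV_j$, and summing over at most $|S_j|(M(S_j)+1)$ terms per $j$ yields $\m(\tilde\cV)\le(\varepsilon/2)\sum_j\Upsilon(S_j)\m(\cV_j)\le\varepsilon/2$. For $\pi$ a good enough $(F,\delta')$-approximation, with $F$ containing all prefixes of the fixed words, the discrepancy set $B:=\bigcup_h\{v\in V:\pi_{a_{h,\ell(h)}}\cdots\pi_{a_{h,1}}v\ne\pi_h v\}$ has measure at most $(\sum_h\ell(h))\delta'$, which can be shrunk at will. Fix a sofic SC path $p(w,g)$ from $w$ to $\pi_g w$ for each $(w,g)\in W_0\times T$, and set $\tilde W:=W_0\setminus\{w:\exists g\in T,\;p(w,g)\cap B\ne\emptyset\}$. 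A crude walk count, using that each path has length $\le C_0$ with edges drawn from $\bigcup_j S_j$, bounds the number of excluded vertices by $|T|\cdot|B|\cdot(\sum_j|S_j|)^{C_0+1}$, so $\m(\bigcup_{g\in S}\pi_g\tilde W)\ge 1-\delta$ once $\delta'$ is chosen small enough. For $w_1,w_2\in\tilde W$ with $\pi_g w_1=w_2$, every edge $(u_{i-1},u_i)=(v,\pi_h v)$ of $p(w_1,g)$ has $v\notin B$, so the $A$-path $v_0,\dots,v_{\ell(h)}$ defined by $v_k:=\pi_{a_{h,k}}\cdots\pi_{a_{h,1}}v$ terminates at $u_i$ and lies in $\tilde\cV$; concatenation yields an $A$-path of length $\le C_0M=:C$.

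The main obstacle is the gap in the second direction between the sofic approximation and a genuine action: the composition $\pi_{a_{h,\ell(h)}}\cdots\pi_{a_{h,1}}v$ agrees with $\pi_h v$ only outside the small discrepancy set $B$, so one cannot blindly replace every $S_j$-edge by its word-rep $A$-path. This asymmetry is exactly what forces the simpler condition to tolerate a slight deficit in coverage, which we absorb through the walk-counting estimate together with the fact that $B$ can be made arbitrarily small by choosing $\pi$ good enough.
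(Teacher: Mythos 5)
Your high-level plan---fix $n=2$ with $S_1=A$ and $S_2=T$ for the ``if'' direction, and rewrite each $S_j$-edge as a word in $A$ for the ``only if'' direction, with the weight in $\Upsilon$ chosen to pay for the expansion of $\cV$---matches the paper's. The ``only if'' direction is sound: you track the failure of exact multiplicativity through an explicit discrepancy set $B$ and a walk count that prunes bad starting points, whereas the paper instead intersects $W$ with a multiplicativity set $V'$ for $A^{mC}$ at the outset; your choice $\Upsilon(Q)=(2/\varepsilon)|Q|(M(Q)+1)$ is sharper than the paper's $\varepsilon^{-1}|A|^{\max_{g}\ell_A(g)}$ because your $\tilde\cV$ is built only from the specific word-prefix images $\pi_{a_{h,k}}\cdots\pi_{a_{h,1}}\cV_j$ rather than from all of $\pi_{A^{m_j}}\cV_j$. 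Either bookkeeping works.

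The ``if'' direction, however, has two gaps, both coming from forgetting that $\pi$ is only an approximate homomorphism. First, taking $W:=W_0\cup M$ with $M:=V\setminus\bigcup_{g\in S}\pi_g W_0$ does not give $\bigcup_{g\in S}\pi_g W=V$: that union equals $(V\setminus M)\cup\bigcup_{g\in S}\pi_g M$, and nothing forces $M\subseteq\bigcup_{g\in S}\pi_g M$; in particular $\pi_{e_G}M$ need not equal $M$, since $\pi_{e_G}$ need not be the identity. The paper's remedy is to adjoin $\pi_{e_G}^{-1}M$ rather than $M$ to $W_0$, so that applying $\pi_{e_G}$ (which is legal since $e_G\in S$) recovers exactly $M$. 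Second, the claim that ``symmetry of $T$ forces both into $\pi_T M=\cV_2$'' is false: if $w_1\in W_0$ and $w_2\in M$ with $\pi_g w_1=w_2$, then $w_1=\pi_g^{-1}w_2$, and $\pi_g^{-1}$ is not $\pi_{g^{-1}}$ for a general sofic approximation, so $w_1$ need not lie in $\pi_T M$; nor does $w_1\in M$ force $w_1\in\pi_T M$ without $\pi_{e_G}M\supseteq M$. The paper instead takes $\cV_2=\bigcup_{g\in T}\bigl((W'\setminus W)\cup\pi_g(W'\setminus W)\cup\pi_g^{-1}(W'\setminus W)\bigr)$, which includes the set-theoretic inverse images $\pi_g^{-1}$ explicitly and gives $\m(\cV_2)\le(2|T|+1)\m(W'\setminus W)$---this also explains why the paper uses the threshold $1/(6|T|\Upsilon(T))$ where you used $1/(2|T|\Upsilon(T))$. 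Both gaps are local and repairable with the same devices the paper employs, but as written those two steps do not go through.
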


\begin{proof}
Denote by $\ell_A$ the word length function on $G$ associated to $A$.

Suppose first that $G$ has property $\sS$-SC. Let $\varepsilon>0$. Define $\Upsilon: \cF(G)\rightarrow [0, \infty)$ by $\Upsilon(F)=\varepsilon^{-1}|A|^{\max_{g\in F}\ell_A(g)}$. Then there is an $S\in \overline{\cF}(G)$ witnessing property $\sS$-SC. Let $T\in \overline{\cF}(G)$ and $\delta>0$. Then we have $C, n, S_1, \dots, S_n$ as given by Definition~\ref{D-CSC 1}. Set $m=\max_{1\le j\le n}\max_{g\in S_j}\ell_A(g)$. Let $\pi: G\rightarrow \Sym(V)$ be a good enough sofic approximation for $G$ in $\sS$. Then we have $W$ and $\cV_1, \dots, \cV_n$ satisfying conditions (i)-(iii) in Definition~\ref{D-CSC 1}. Denote by $V'$ the set of all $v\in V$ satisfying $\pi_{gh}v=\pi_g\pi_h v$ for all $g, h\in A^{mC}$.
When $\pi$ is a good enough sofic approximation, we have $\m(V\setminus V')\le \delta/|S|$. Set $W'=W\cap V'$.
For each $1\le j\le n$, set $m_j=\max_{g\in S_j}\ell_A(g)$ and $\cV_j^\dag=\bigcup_{g\in A^{m_j}}\pi_g\cV_j$. Set $\cV=\bigcup_{j=1}^n\cV_j^\dag$. Then
\[
\m(\cV)\le \sum_{j=1}^n\m(\cV_j^\dag)\le \sum_{j=1}^n|A^{m_j}|\cdot \m(\cV_j)\le \varepsilon\sum_{j=1}^n\Upsilon(S_j)\m(\cV_j)\le \varepsilon,
\]
verifying condition (i) in the proposition statement.
Note also that
\[\m\bigg(\bigcup_{g\in S}\pi_gW'\bigg)\ge \m\bigg(\bigcup_{g\in S}\pi_g W\bigg)-|S|\cdot \m(V\setminus V')\ge 1-\delta,\]
which verifies condition (ii) in the proposition statement.
Let $g\in T$ and $w_1, w_2\in W'$ be such that $\pi_g w_1=w_2$. Then $w_1$ and $w_2$ are connected by a path of length at most $C$ in which each edge is an $S_j$-edge with both endpoints in $\cV_j$ for some $1\le j\le n$. It is easily checked that the
endpoints of such an edge are connected by an $A$-path of length at most $m_j$ with all vertices in $\cV_j^\dag$. Thus $w_1$ and $w_2$ are connected by an $A$-path of length at most $Cm$ with all vertices  in $\cV$, verifying condition (iii)
in the proposition statement. This proves the ``only if'' part.

To prove the ``if'' part, suppose that $G$ satisfies the condition in the statement of the proposition. Let $\Upsilon$ be a function $\cF(G)\rightarrow [0, \infty)$.
Take $0<\varepsilon<1/(2\Upsilon(A))$.
Then we have an $S$ as in the statement of the proposition.   Let $T\in \overline{\cF}(G)$. Take $0< \delta<1/(6|T|\Upsilon(T))$. Then we have a $C$ as in the statement of the proposition. Set $n=2$, $S_1=A$, and $S_2=T$. Let $\pi: G\rightarrow \Sym(V)$ be a good enough sofic approximation for $G$ in $\sS$. Then we have $W$ and $\cV$ as in the statement of the proposition.
Set $W'=W\cup \pi_{e_G}^{-1}(V\setminus \bigcup_{g\in S}\pi_g W)$. Then $\bigcup_{g\in S}\pi_gW'=V$, verifying condition (ii) in Definition~\ref{D-CSC 1}.
Set $\cV_1=\cV$ and $\cV_2=\bigcup_{g\in T}((W'\setminus W)\cup \pi_g(W'\setminus W)\cup \pi_g^{-1}(W'\setminus W))$.
Then
\[
\m(\cV_2)\le (2|T|+1)\m(W'\setminus W)\le 3|T|\delta,
\]
and hence
\[
\Upsilon(S_1)\m(\cV_1)+\Upsilon(S_2)\m(\cV_2)\le \Upsilon(A)\varepsilon+3\Upsilon(T)|T|\delta< \frac12 + \frac12 = 1,
\]
which verifies condition (i) in Definition~\ref{D-CSC 1}.
Let $g\in T$ and $w_1, w_2\in W'$ be such that $\pi_gw_1=w_2$. If $w_1\not\in W$ or $w_2\not\in W$, then $(w_1, w_2)$ is an $S_2$-edge with both endpoints in $\cV_2$. If $w_1, w_2\in W$, then $w_1$ and $w_2$ are connected by  an $S_1$-path of length at most $C$ such that all vertices of this path lie in $\cV=\cV_1$, yielding condition (iii) in Definition~\ref{D-CSC 1}.
\end{proof}

\subsection{Groups without property sofic SC}\label{SS-without}

Let $\sS$ be a collection of sofic approximations for $G$ which contains
arbitrarily good sofic approximations (or, equivalently, which contains a sofic approximation sequence).
In Propositions~\ref{P-locally finite group} and \ref{P-virtually free group}
we identify two classes of groups which fail to have property $\sS$-SC, and
in particular fail to have property sofic SC.

\begin{lemma} \label{L-finite group}
Suppose that $G$ is finite. Then
$G$ does not have property $\sS$-SC.
\end{lemma}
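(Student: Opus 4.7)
I would prove this by contradiction, exhibiting a specific obstruction function $\Upsilon$ and then finding, for any witness $S$ and any choices $T, C, n, S_1, \ldots, S_n$ from the definition, a sufficiently good sofic approximation $\pi \in \sS$ admitting no $W, \cV_1, \ldots, \cV_n$ satisfying (i)--(iii). Since $|G|$ is finite, I would take $\Upsilon \equiv 2|G|$ as a constant function on $\cF(G)$. Assuming $G$ had property $\sS$-SC with this $\Upsilon$, any witness $S \in \overline{\cF}(G)$ satisfies $|S| \le |G|$; I set $T = \{e_G\}$ and extract the constants $C, n$ and sets $S_1, \ldots, S_n$ from the definition.

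Using that $\sS$ contains arbitrarily good sofic approximations, I would pick $\pi : G \to \Sym(V)$ in $\sS$ that is an $(F,\delta)$-approximation with $\{e_G\} \subseteq F$ and $\delta$ so small that the fixed set $V_0 := \{v \in V : \pi_{e_G} v = v\}$ satisfies $\m(V_0) > 1 - \tfrac{1}{2|G|}$. This is possible because $\rho_\Hamm(\pi_{e_G}, \pi_{e_G}^2) \le \delta$ together with bijectivity of $\pi_{e_G}$ forces $\pi_{e_G}$ to fix at least a $(1-\delta)$-fraction of $V$ (from $\pi_{e_G}(v) = \pi_{e_G}^2(v)$ and injectivity one deduces $v = \pi_{e_G}(v)$). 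Suppose now that $W, \cV_1, \ldots, \cV_n$ satisfy (i)--(iii). Condition (ii) forces $\m(W) \ge 1/|S| \ge 1/|G|$, and (i) together with $\Upsilon \equiv 2|G|$ gives $\m(\bigcup_j \cV_j) \le \sum_j \m(\cV_j) \le 1/(2|G|)$.

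The heart of the argument is to use (iii) to force the inclusion $W \cap V_0 \subseteq \bigcup_j \cV_j$. For each $w \in W \cap V_0$, the pair $(w_1, w_2) = (w, w)$ satisfies $\pi_{e_G} w_1 = w_2$ with $e_G \in T$, so (iii) produces a path from $w$ to itself of length at most $C$ whose every edge is of the form $(v, \pi_h v)$ with both endpoints in some $\cV_j$. Since $w$ appears as an endpoint of the first edge of such a path, one concludes $w \in \cV_j$ for some $j$. Combining gives
\[
\m\bigg(\bigcup_{j=1}^n \cV_j\bigg) \ge \m(W \cap V_0) \ge \m(W) - \m(V \setminus V_0) > \frac{1}{|G|} - \frac{1}{2|G|} = \frac{1}{2|G|},
\]
contradicting the upper bound established above.

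The main obstacle is the step extracting $w \in \bigcup_j \cV_j$ from (iii) in the $w_1 = w_2 = w$ case. This forcing rests on reading ``connected by a path of length at most $C$'' in (iii) as demanding a walk with at least one edge---so that its first edge, which has both endpoints in some $\cV_j$, must have $w$ as one of these endpoints. Under the alternative reading allowing a length-$0$ path, a transversal of the (approximately regular) $\pi$-orbits together with empty $\cV_j$ would satisfy (i)--(iii) trivially and the lemma would fail; so pinning down the intended convention is the delicate point that requires care in writing out the details.
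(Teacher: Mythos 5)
Your argument is correct and is essentially identical to the paper's own proof, differing only in the constant ($\Upsilon \equiv 2|G|$ versus the paper's $4|G|$) and in how the final contradiction is packaged: the paper bounds $\m(W)$ from above and contradicts $1 = \m(\bigcup_{g\in S}\pi_g W) \le |G|\m(W)$, while you equivalently bound $\m(W)$ from below and contradict the upper bound on $\m(\bigcup_j \cV_j)$. The key step in both is the inclusion $W \cap V_0 \subseteq \bigcup_j \cV_j$ (written as $W \subseteq U \cup \bigcup_j \cV_j$ in the paper, with $U$ the complement of your $V_0$), obtained by applying condition~(iii) to the pair $(w,w)$ with $g = e_G$.

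The ``delicate point'' you flag --- whether a length-$0$ path is permitted, which would make condition~(iii) vacuous when $w_1 = w_2$ and the step above would fail --- is a genuine ambiguity, but it applies equally to the paper's own proof, which silently adopts the convention that paths in condition~(iii) have at least one edge. This convention is consistent with the paper's usage elsewhere (e.g.\ the proof of Proposition~\ref{P-SC to sofic SC}, where the extracted path length $l$ satisfies $1 \le l \le C$), so your reading is the intended one. You were right to highlight it rather than pass over it silently.
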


\begin{proof}
Suppose to the contrary that $G$ has property $\sS$-SC.
Define $\Upsilon: \cF(G)\rightarrow [0, \infty)$ by
$\Upsilon(F)=4|G|$ for all $F\in \cF(G)$.
Then there is some  $S\in \overline{\cF}(G)$  satisfying the conditions in Definition~\ref{D-CSC 1}. Put  $T=\{e_G\}$. Then there are  $C, n\in \Nb$ and $S_1, \dots, S_n\in \overline{\cF}(G)$  satisfying the conditions in Definition~\ref{D-CSC 1}.

Let $\pi: G\rightarrow \Sym(V)$ be a good enough sofic approximation in $\sS$ so that
there are subsets $W$ and $\cV_1, \dots, \cV_n$ of $V$ satisfying conditions (i)-(iii) in Definition~\ref{D-CSC 1} and
also so that $\m(U)\le 1/(4|G|)$ where $U$ consists of all
$v\in V$ satisfying $\pi_{e_G}v\neq v$. Seeing that $W\subseteq U\cup \bigcup_{j=1}^n\cV_j$, we have
\begin{align*}
\m(W)\le \m(U)+\m\bigg(\bigcup_{j=1}^n\cV_j\bigg)\le \frac{1}{4|G|}+\sum_{j=1}^n\m(\cV_j)=\frac{1}{4|G|}+\frac{1}{4|G|}\sum_{j=1}^n\Upsilon(S_j)\m(\cV_j)\le \frac{1}{2|G|}.
\end{align*}
Thus
\[ 1= \m\bigg(\bigcup_{g\in S}\pi_gW\bigg)\le |S|\m(W)\le |G|\m(W)\le \frac12 ,\]
a contradiction.
\end{proof}

\begin{proposition} \label{P-locally finite group}
Suppose that $G$ is locally finite. Then $G$ does not have property $\sS$-SC.
\end{proposition}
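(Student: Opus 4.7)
I would prove this by contradiction, reducing essentially to Lemma~\ref{L-finite group} via a careful choice of the function $\Upsilon$ that exploits local finiteness. If $G$ is finite, we are done by Lemma~\ref{L-finite group}, so assume $G$ is countably infinite and locally finite, and that $G$ has property $\sS$-SC.

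The strategy is to mimic the proof of Lemma~\ref{L-finite group}, but with the finite subgroup $G_0 := \langle S \rangle$ (of order $m_0 := |G_0|$) playing the role that the ambient group $G$ played there. Since $G$ is locally finite, for every $F \in \cF(G)$ the subgroup $\langle F \rangle$ is finite, so we may define, for instance, $\Upsilon(F) := 4|\langle F \rangle|$. Applying property $\sS$-SC produces $S \in \overline{\cF}(G)$; then taking $T := G_0$ yields $C, n \in \Nb$ and $S_1,\dots,S_n \in \overline{\cF}(G)$. For a sufficiently good sofic approximation $\pi : G \to \Sym(V)$ in $\sS$, extract $W$ and $\cV_1, \dots, \cV_n$ satisfying (i)--(iii), and arrange further that the exceptional set $U := \{v \in V : \pi_{e_G} v \neq v\}$ has $\m(U) \le 1/(4 m_0)$.

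The key containment $W \subseteq U \cup \bigcup_j \cV_j$ would be proved exactly as in Lemma~\ref{L-finite group}: for any $w \in W \setminus U$, condition (iii) applied with $w_1 = w_2 = w$ and $g = e_G \in T$ provides a path of length at most $C$ from $w$ to itself, whose first edge forces $w \in \cV_j$ for some $j$. Then, using (i) to bound $\sum_j \m(\cV_j)$ (with $\Upsilon(S_j) \ge 4$), together with (ii) and the a priori bound $|S| \le m_0$ to bound $\m(W)$ from below by $1/m_0$, I would derive a contradiction paralleling the arithmetic in the finite-group case.

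The main obstacle is that the effective size $m_0 = |\langle S \rangle|$ is only determined after $\Upsilon$ has been chosen, whereas in Lemma~\ref{L-finite group} one simply sets $\Upsilon \equiv 4|G|$. The precise form of $\Upsilon$ must therefore be crafted so that the adversary cannot defeat the bound by either driving $m_0$ upward or by selecting $S_j$'s with $|\langle S_j \rangle|$ small; this calibration, which is what local finiteness ultimately buys us, is the technical heart of the argument, and it may require iterating the application of property $\sS$-SC or leveraging the crucial fact that the same $S$ must work for every $T \in \overline{\cF}(G)$ (in particular for arbitrarily large finite subgroups $T \supseteq G_0$, which exist since $G$ is infinite and locally finite).
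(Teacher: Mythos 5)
Your proposal correctly identifies the starting point (bootstrap from Lemma~\ref{L-finite group} using local finiteness) and correctly flags the central difficulty (calibrating $\Upsilon$), but the mechanism you outline does not close, and the honest caveat in your last paragraph is indeed a real gap rather than a technicality.

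Here is why the arithmetic you sketch cannot work as stated. With $\Upsilon(F) = 4|\langle F\rangle|$ and $T = G_0 = \langle S\rangle$, the inclusion $W \subseteq U \cup \bigcup_j \cV_j$ does go through, but condition (i) only gives $\sum_j \m(\cV_j) \le 1/4$ (since $\Upsilon(S_j) \ge 4$ is the only universal lower bound), so $\m(W) \le 1/(4m_0) + 1/4$. Condition (ii) only gives $\m(W) \ge 1/|S| \ge 1/m_0$. These two estimates are perfectly compatible as soon as $m_0 \ge 3$, so no contradiction arises: the prover of property $\sS$-SC can simply choose $S$ generating a subgroup of order at least $3$, and your chosen $\Upsilon$ gives them no further trouble. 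The trouble is structural: bounding $\m(W)$ is the wrong target, because condition (ii) only pins it down to within a factor of $|S|$, and $|S|$ is chosen by the adversary \emph{after} $\Upsilon$.

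The paper's proof (which you did not arrive at) changes the target and two other key choices. First, $\Upsilon$ is defined relative to a \emph{fixed exhaustion} $G_1 \subsetneq G_2 \subsetneq \cdots$ of $G$ by finite subgroups, via $\Upsilon(F) = 3|G_{\Phi(F)}|$ where $\Phi(F)$ is the least $k$ with $F \subseteq G_k$; this makes $\Upsilon(S_j)$ proportional to the size of the exhaustion term containing $S_j$, not just $|\langle S_j\rangle|$. Second, $T$ is taken to be $G_{m+1}$, a finite subgroup \emph{strictly larger} than $G_m \supseteq \langle S\rangle$ — here infiniteness of $G$ is used to guarantee the exhaustion does not terminate — rather than $T = \langle S\rangle$ as you propose. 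Third, and most importantly, one does not try to show $W$ is small. Instead one shows that the thickened set $\cV := \bigcup_j \pi_{G_{\Phi(S_j)}}(\cV_j \cap V_1)$ covers $V_1$ (a set of measure $\ge 1/2$ on which $G_M$ acts genuinely), while condition (i) forces $\m(\cV) \le 1/3$. The covering is established by an orbit-counting argument: since $\bigcup_{g\in G_m}\pi_g W = V$ and $|G_m| < |G_{m+1}|$, each $G_{m+1}$-orbit in $V_1$ must meet $W$ in points from at least two distinct $G_m$-orbits, so some $t \in G_{m+1}\setminus G_m \subseteq T$ moves a point of $W$ to another; following the resulting path to its \emph{first} edge whose group element leaves $G_m$ shows that the entire $G_m$-orbit of the starting point lies inside $\pi_{G_{\Phi(S_{j_i})}}\cV_{j_i} \subseteq \cV$. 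None of this machinery — the fixed exhaustion, the choice of $T$ strictly above $\langle S\rangle$, the thickening, the first-departure-from-$G_m$ argument — appears in your outline, and it is precisely what "iterating the application of property $\sS$-SC" and "leveraging that the same $S$ works for every $T$" would have to become to make your sketch into a proof.
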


\begin{proof}
Suppose to the contrary that $G$ has property $\sS$-SC.
Then $G$ must be infinite by Lemma~\ref{L-finite group}.
Take a strictly increasing sequence $\{G_k\}$ of finite subgroups of $G$ such that $G=\bigcup_{k\in \Nb}G_k$.  For each $F\in \cF(G)$, denote by $\Phi(F)$ the smallest $k\in \Nb$ satisfying $F\subseteq G_k$. Define $\Upsilon: \cF(G)\rightarrow [0, \infty)$ by
$\Upsilon(F)=3|G_{\Phi(F)}|$.
Then there is some  $S\in \overline{\cF}(G)$  satisfying the conditions in Definition~\ref{D-CSC 1}. Put $m=\Phi(S)$ and $T=G_{m+1}\in \overline{\cF}(G)$. Then there are  $C, n\in \Nb$ and $S_1, \dots, S_n\in \overline{\cF}(G)$ satisfying the conditions in Definition~\ref{D-CSC 1}.
Put $M=\max\{\max_{1\le j\le n}\Phi(S_j), m+1\}$.

Let $\pi: G\rightarrow \Sym(V)$ be a good enough sofic approximation in $\sS$ so that there is a set $V_1\subseteq V$ satisfying the following conditions:
\begin{enumerate}
\item $\pi_g\pi_hv=\pi_{gh}v$ for all $g, h\in G_M$ and $v\in V_1$,
\item $\pi_gv\neq \pi_hv$ for all $v\in V_1$ and distinct $g, h\in G_M$,
\item $\pi_gV_1=V_1$ for all $g\in G_M$,
\item $\m(V_1)\ge 1/2$.
\end{enumerate}
Then $G_M$ acts on $V_1$ via $\pi$.
Denote by $\sP$ the  partition  of $V_1$ into $G_{m+1}$-orbits.

By assumption, when $\pi$ is a good enough sofic approximation we can find subsets $W$ and $\cV_1, \dots, \cV_n$ of $V$ satisfying conditions (i)-(iii) in Definition~\ref{D-CSC 1}. Note that $V=\bigcup_{g\in S}\pi_gW=\bigcup_{g\in G_m}\pi_gW$,
which implies that for every member $P$ of $\sP$ the intersection $P\cap W$ is not contained in a single $G_m$-orbit.

Set $\cV=\bigcup_{j=1}^n\pi_{G_{\Phi(S_j)}}(\cV_j\cap V_1)$. Then
\begin{gather}\label{E-third}
\m(\cV)\le \sum_{1\le j\le n}|G_{\Phi(S_j)}|\m(\cV_j\cap V_1)\le \sum_{1\le j\le n}|G_{\Phi(S_j)}|\m(\cV_j)= \frac{1}{3}\sum_{1\le j\le n}\Upsilon(S_j)\m(\cV_j)\le \frac{1}{3}.
\end{gather}

Now let $P\in \sP$ and $w_1\in P\cap W$. Then we can find some $w_2\in P\cap W$ such that $w_1$ and $w_2$ are in different $G_m$-orbits. We have $w_2=\pi_tw_1$ for some $t\in G_{m+1}\setminus G_m=T\setminus G_m$.
Thus we can find some $1\le l\le C$, $1\le j_1, \dots, j_l\le n$, $v_k\in \cV_{j_k}$ for $1\le k\le l$, and $g_k\in S_{j_k}$ for $1\le k\le l$  such that, setting $v_0=w_1$, we have $\pi_{g_k}v_{k-1}=v_k$ for all $1\le k\le l$ and $v_l=w_2$.
Then
\[
w_2=v_l=\pi_{g_l}\dots \pi_{g_1}v_0=\pi_{g_l\dots g_1}w_1,
\]
and hence $g_l\cdots g_1=t$. It follows that the elements $g_1, \dots, g_l$ cannot all lie  in $G_m$. Denote by $i$ the smallest $k$ satisfying $g_k\not\in G_m$. Then $v_i\in \pi_{G_{\Phi(S_{j_i})}}w_1$, and hence $w_1\in \pi_{G_{\Phi(S_{j_i})}}v_i$. Consequently,
\[
\pi_{G_m}w_1\subseteq \pi_{G_m}\pi_{G_{\Phi(S_{j_i})}}v_i=\pi_{G_{\Phi(S_{j_i})}}v_i\subseteq \pi_{G_{\Phi(S_{j_i})}}(\cV_{j_i}\cap V_1)\subseteq \cV.
\]
Therefore $V_1=\bigcup \sP=\pi_{G_m}(W\cap (\bigcup \sP))\subseteq \cV$, whence
$ \m(\cV)\ge \m(V_1)\ge 1/2$, contradicting (\ref{E-third}).
\end{proof}

\begin{proposition} \label{P-virtually free group}
Suppose that $G$ is finitely generated and virtually free.
Then $G$ does not have property $\sS$-SC.
\end{proposition}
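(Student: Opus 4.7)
The plan is to apply the characterization in Proposition~\ref{P-sofic SC for fg}, exhibiting $\varepsilon>0$ such that for every $S\in\overline{\cF}(G)$ there are $T\in\overline{\cF}(G)$ and $\delta>0$ with the property that, for every $C\in\Nb$, some good enough sofic approximation $\pi\in\sS$ admits no $W,\cV$ satisfying conditions (i)--(iii) of that proposition. After replacing $G$ by a finite-index subgroup, I may assume $G$ contains a normal free subgroup $F$ of finite index; fix symmetric finite generating sets $A_F\subseteq A$ for $F$ and $G$ containing $e_G$. The Cayley graph of $F$ with respect to $A_F$ is a tree, and for any radius $R$ a sufficiently good sofic approximation $\pi\colon G\to\Sym(V)$ has the property, on a subset $V_0\subseteq V$ of measure arbitrarily close to $1$, that the ball of Schreier-radius $R$ around each $v\in V_0$ under $\pi|_F$ is isomorphic to the corresponding ball in the Cayley tree of $F$. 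Consequently any $A$-path of length $\le R$ between two vertices of such a ball must visit every vertex of the unique $A_F$-geodesic between them, since removing any intermediate geodesic vertex disconnects the endpoints in a tree.

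Given $S$, I would take $T$ to contain many elements of $F$ of large $A_F$-length together with their $(S,S)$-translates $S^{-1}tS$, and $\delta>0$ small. For any $C$, I would choose $\pi$ good enough that $\pi|_F$ has girth much larger than $\max_{t\in T}\ell_{A_F}(t)+C$, so that the tree-like property holds up to the relevant radius on almost all of $V$. Now suppose $W,\cV$ satisfy the proposition's conditions. Then for every $t\in T$ of large $A_F$-length and every $w_1,w_2\in W\cap V_0$ with $\pi_tw_1=w_2$, the unique $A_F$-geodesic from $w_1$ to $w_2$ lies entirely in $\cV$. The covering condition $\m(\bigcup_{g\in S}\pi_gW)\ge 1-\delta$, applied via the substitution $v=\pi_gw$, $\pi_tv=\pi_{g'}w'$ with $g,g'\in S$ and $w,w'\in W$, produces for each base element $t$ a lower bound of the form $\sum_{\bar t\in S^{-1}tS}\m(W\cap\pi_{\bar t}^{-1}(W))\ge 1-2\delta$. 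A double-counting argument comparing the total length of the geodesics forced into $\cV$ with the size of $\cV$ (each vertex of $\cV$ lying on a controlled number of the relevant geodesics by virtue of the tree structure) gives a matching upper bound on the same sum, and together with the lower bound and a suitable choice of $T$ forces $\m(\cV)>\varepsilon$, contradicting condition (i).

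The principal obstacle is that the adversary can make $|S|$ arbitrarily large, so $|W|$ may be as small as $|V|/|S|$; one must choose $T$ correspondingly rich to compensate while keeping the double-counting constants under control. In the virtually nonabelian-free case this is managed using the exponential growth of $F$, which provides an abundance of distinct $(S,S)$-double cosets whose contributions to the sum accumulate linearly in the number of cosets used, so that iterating amplifies the lower bound on $\m(\cV)$ past $\varepsilon$. The virtually cyclic case $F=\Zb$ admits a cleaner direct argument in the spirit of Proposition~\ref{P-locally finite group}: working within each $\pi_1$-cycle of the Schreier graph, the fact that any connected subgraph of an interval is again an interval forces $\cV$ restricted to the cycle to contain a long subinterval around $W$, and this combined with the covering condition forces $\m(\cV)$ to be close to $1$, violating (i).
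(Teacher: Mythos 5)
Your high-level intuition is sound and matches the paper's: exploit the tree geometry of the free subgroup to force $\cV$ to catch most of $V$. But there are two genuine gaps in the execution.

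First, the claim that ``any $A$-path of length $\le R$ between two vertices of such a ball must visit every vertex of the unique $A_F$-geodesic between them'' is false. An $A$-path can use generators in $A\setminus A_F$ to leave the $F$-coset of the endpoints, travel ``parallel'' to the geodesic in another coset, and re-enter; it then bypasses the geodesic vertices entirely. What is true is only that the path visits points whose $F$-coordinate (with respect to a chosen transversal) stays within a bounded distance of the geodesic, and that this coordinate track must sweep past the midpoint region of the tree. Making this precise is exactly the nontrivial content of the proposition: the paper's proof tracks the decomposition $t_j=b_jd_j$ with $b_j$ a coset representative and $d_j\in G_1$, shows $\ell(d_jd_{j-1}^{-1})\le R$, and then uses the tree structure of $G_1$ (not of the Schreier graph) to conclude that the concatenated geodesic passes through $e_{G_1}$, so some $t_i$ lies in $HB_R$. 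That is the step your sketch elides.

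Second, the double-counting argument is not carried out, and as sketched it is not clear it can be. Your upper bound hinges on ``each vertex of $\cV$ lying on a controlled number of the relevant geodesics by virtue of the tree structure,'' but you never quantify this, and without the (incorrect) forced-geodesic claim it is unclear what geometric object you would be counting incidences against. The paper sidesteps double counting entirely: having shown that for every $v$ in a large set $V'$ some $\pi_{t_i}v$ lands in $\cV$ with $t_i\in HB_R$, it deduces directly $V'\subseteq\bigcup_{g\in(HB_R)^{-1}}\pi_g\cV$, which immediately bounds $\m(\cV)$ from below against the choice of $\Upsilon$. This is cleaner, works uniformly (no case split into virtually cyclic vs.\ not), and crucially uses only the rank-one observation that the path from $h_1^{-1}a_1^{N}$ to $h_2^{-1}a_1^{-N}$ must pass near the identity --- nowhere is exponential growth of $F$ needed, contrary to what your sketch suggests. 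Taking $T=S\{a_1^{2N},e_G,a_1^{-2N}\}S$ with a single well-chosen power of a single generator suffices; a ``rich'' $T$ and an amplification argument are unnecessary.

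So: right starting point, but the proof is not there yet, and the planned path to the contradiction (forced full geodesics, then double counting) would need significant repair to become correct.
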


\begin{proof}
By Lemma~\ref{L-finite group} we may assume that $G$ is infinite. Take a free subgroup $G_1$ of $G$ with finite index. Then $G_1$ is nontrivial and, by Schreier's lemma, finitely generated.
Take free generators $a_1, \dots, a_r$ for $G_1$. Set $A=\{a_1, \dots, a_r, a_1^{-1}, \dots, a_r^{-1}, e_G\}$.
Denote by $\ell$ the word length function on $G_1$ associated to $a_1, \dots, a_r, a_1^{-1}, \dots, a_r^{-1}$. For each $n\in \Nb$ denote by $B_n$ the set of elements $g$ in $G_1$ satisfying $\ell(g)\le n$. Take a subset $H$ of $G$ containing $e_G$ such that $G$ is the disjoint union of the sets $hG_1$ for $h\in H$. Set $D=H\cup A$.
For each $F\in \cF(G)$, denote by $\Psi(F)$ the smallest $n\in \Nb$ satisfying $F\subseteq HB_n$, and set $F'=HB_{\Psi(F)}$.

For any $g\in G$ and $h\in H$ we can write $gh$ uniquely as $bd$ with $b\in H$ and $d\in G_1$, and using
this factorization we set $R$ to be the maximum value of $\ell(d)$ over all $g\in D$ and  $h\in H$.
Define $\Upsilon: \cF(G)\rightarrow [0, \infty)$ by $\Upsilon(F)=3|HB_{R}|\cdot |F'|$.

Suppose that $G$ has property $\sS$-SC. Then there is some $S\in \overline{\cF}(G)$ satisfying the conditions in Definition~\ref{D-CSC 1}. Put $m=\Psi(S)$, $N=m+1$, and $T=S\{a_1^{2N}, e_G, a_1^{-2N}\}S\in \overline{\cF}(G)$. Then there are  $C, n\in \Nb$ and  $S_1, \dots, S_n\in \overline{\cF}(G)$  satisfying the conditions in Definition~\ref{D-CSC 1}.  Put $C'=C\max_{1\le j\le n}(1+\Psi(S_j))\in \Nb$ and  $U=((HA)^{C'}HB_{2N})\cup ((HA)^{C'}HB_{2N})^{-1}\in \overline{\cF}(G)$.

Let $\pi: G\rightarrow \Sym(V)$ be a good enough sofic approximation in $\sS$ so that there are subsets $W$ and $\cV_1, \dots, \cV_n$ of $V$ satisfying conditions (i)-(iii) in Definition~\ref{D-CSC 1} and a set $V'\subseteq V$ satisfying the following conditions:
\begin{enumerate}
\item $\pi_g\pi_hv=\pi_{gh}v$ for all $g, h\in U^{10}$ and $v\in V'$,
\item $\pi_gv\neq \pi_hv$ for all $v\in V'$ and distinct $g, h\in U$,
\item $\m(V')\ge 1/2$.
\end{enumerate}
For each $1\le j\le n$, set
$\cV_j'=\bigcup_{g\in S_j'}\pi_g\cV_j.$
Set $\cV=\bigcup_{j=1}^n\cV_j'$.

Let $v\in V'$. We have $\pi_{a_1^{N}}v=\pi_{h_1}w_1$ and $\pi_{a_1^{-N}}v=\pi_{h_2}w_2$ for some $h_1, h_2\in S$ and $w_1, w_2\in W$. Then
\[
\pi_{h_2^{-1}a_1^{-2N}h_1}w_1=\pi_{h_2}^{-1}\pi_{a_1^{-N}}\pi_{a_1^{N}}^{-1}\pi_{h_1}w_1=\pi_{h_2}^{-1}\pi_{a_1^{-N}}v=w_2.
\]
By assumption we can find a path from $w_1$ to $w_2$ of length at most $C$ in which each edge is an $S_j$-edge with both endpoints in $\cV_j$ for some $1\le j\le n$. Replacing each such edge by a $D$-path of length at most $1+\Psi(S_j)$ and with all vertices in $\cV_j'$, we  find a $D$-path from $w_1$ to $w_2$ of length at most $C'$ such that all vertices are in $\cV$.
Thus we get some $1\le l\le C'$,  $v_k\in \cV$ for $1\le k\le l$, and $g_k\in D$ for $1\le k\le l$  such that, setting $v_0=w_1$, we have $\pi_{g_k}v_{k-1}=v_k$ for all $1\le k\le l$ and $v_l=w_2$.
Then
\[
\pi_{h_2^{-1}a_1^{-N}}v=w_2=\pi_{g_l}\dots \pi_{g_1}w_1=\pi_{g_l\dots g_1h_1^{-1}a_1^{N}}v.
\]
Since $h_2^{-1}a_1^{-N}$ and $g_l\dots g_1h_1^{-1}a_1^{N}$ belong to $U$, we conclude that $h_2^{-1}a_1^{-N}=g_l\dots g_1h_1^{-1}a_1^{N}$.
 Set $t_j=g_j\dots g_1h_1^{-1}a_1^{N}\in U$ for $0\le j\le l$.
 We can write each $t_j$ uniquely as $b_jd_j$ for some $b_j\in H$ and $d_j\in G_1$. Then we have
\[
\ell(d_jd_{j-1}^{-1})\le R
\]
for all $1\le j\le l$.
Consider the path $p$ in $G_1$ from $d_0$ to $d_l$ defined by concatenating the geodesic from $d_{j-1}$ to $d_j$ for all $1\le j\le l$, where we endow $G_1$ with the right invariant metric induced from $\ell$. Note that as reduced words $d_0$ and $d_l$ end with $a_1$ and $a_1^{-1}$ respectively. Thus $p$ passes through $e_G$. It follows that there is some $1\le i\le l$ with $\ell(d_{i})\le R$. Then $t_i\in HB_{R}$, whence
\[
v=\pi_{t_i}^{-1}v_i=\pi_{t_i^{-1}}v_i\in \bigcup_{g\in (HB_{R})^{-1}}\pi_g \cV.
\]
Therefore $V'\subseteq \bigcup_{g\in (HB_{R})^{-1}}\pi_g \cV$.

Now we get
\begin{gather*}
\frac12 \le \m(V')
\le \m \bigg(\bigcup_{g\in (HB_{R})^{-1}}\pi_g \cV \bigg)
\le |HB_{R}|\m(\cV) \hspace*{40mm} \\
\hspace*{40mm} \ \le |HB_{R}|\sum_{j=1}^n|S_j'|\m(\cV_j)
=\frac{1}{3}\sum_{j=1}^n\Upsilon(S_j)\m(\cV_j)\le \frac13 ,
\end{gather*}
a contradiction.
\end{proof}

\subsection{Groups with property sofic SC} \label{SS-SC}

In Theorems~\ref{T-compare SC} and \ref{T-w-normal amenable} below we will identify classes of groups
that have property sofic SC. This will rely on results from \cite{KerLi19} that we can access
via the connection to property SC established in Proposition~\ref{P-SC to sofic SC}.

\begin{definition} \label{D-SC}
Let $\fC$ be a class of free p.m.p.\ actions of a fixed infinite $G$.
We say that $\fC$ has {\it property SC} if for any function $\Upsilon: \cF(G)\rightarrow [0, \infty)$
there exists an $S\in \overline{\cF}(G)$ such that for any $T\in \overline{\cF}(G)$
there are $C, n\in \Nb$, and  $S_1, \dots, S_n\in \overline{\cF}(G)$  so that for any $G\curvearrowright (X, \mu)$ in $\fC$ there are Borel subsets $W$ and $\cV_j$ of $X$ for $1\le j\le n$  satisfying the following conditions:
\begin{enumerate}
\item $\sum_{j=1}^n \Upsilon(S_j)\mu(\cV_j)\le 1$,
\item $SW=X$,
\item if $w_1, w_2\in W$ satisfy $gw_1=w_2$ for some $g\in T$ then $w_1$ and $w_2$ are connected by a path of length at most $C$ in which each edge is an $S_j$-edge with both endpoints in $\cV_j$ for some $1\le j\le n$.
\end{enumerate}
We say that a p.m.p.\ action $G\curvearrowright (X, \mu)$ has {\it property SC} if the singleton class
containing it has property SC.
We say that $G$ itself has {\it property SC} if the class of all free p.m.p\ actions $G\curvearrowright (X, \mu)$
has property SC (note that freeness implies atomlessness of the measure since $G$ is infinite).
\end{definition}

\begin{remark} \label{R-SC no bound}
When $\fC$ consists of either a single free p.m.p.\ action or all free p.m.p.\ actions of a fixed $G$,
the existence of the bound $C$ is automatic, as explained in the paragraph following Proposition~3.5 in \cite{KerLi19}.
\end{remark}

\begin{proposition} \label{P-SC to sofic SC}
Suppose that $G$ is infinite and sofic. Let $G\curvearrowright (X, \mu)$ be a free p.m.p.\ action with property SC.
Then the action has property sofic SC.
\end{proposition}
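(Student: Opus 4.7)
The strategy is to realize the Borel sets $W$ and $\cV_j$ furnished by property SC on each sofic approximation $\pi:G\to\Sym(V)$ via a partition homomorphism $\varphi\in\Hom_\mu(\sC^\sharp,F^\sharp,\delta^\sharp,\pi)$, choosing $\sC^\sharp$ fine enough to encode not only the sets themselves but also the concrete paths witnessing condition (iii) of Definition~\ref{D-SC}. Condition (ii) of Definition~\ref{D-CSC 3}, which demands exact covering of $V$, will be achieved by enlarging the transferred $W$ by a small error set and absorbing that set into a single extra $\cV_{n+1}^\ast$.

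In detail, given $\Upsilon:\cF(G)\to[0,\infty)$, first apply property SC of the action to $2\Upsilon$ in place of $\Upsilon$, leaving a factor of $1/2$ of slack in the measure bound. This yields $S\in\overline{\cF}(G)$, which may be arranged to contain $e_G$. For a given $T\in\overline{\cF}(G)$, property SC then supplies $C,n,S_1,\dots,S_n$ and Borel $W,\cV_1,\dots,\cV_n\subseteq X$. For each $g\in T$ and each word datum $\omega=(g_1,\dots,g_\ell)$ with $\ell\le C$, $g_k\in S_{j_k}$, and $g_\ell\cdots g_1=g$, let $A_{g,\omega}\subseteq W\cap g^{-1}W$ be the set of $x$ such that $g_k\cdots g_1\,x\in\cV_{j_k}\cap\cV_{j_{k+1}}$ for every relevant $k$; by condition (iii) only finitely many $\omega$ arise and $\{A_{g,\omega}\}_\omega$ covers $W\cap g^{-1}W$ for each $g\in T$. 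Take $\sC^\sharp$ to be any finite Borel partition refining $W$, every $\cV_j$, and every $A_{g,\omega}$; take $F^\sharp\in\cF(G)$ to contain $e_G$, $S$, $T$, all $S_j$, and every partial product $g_k\cdots g_1$ appearing in the word data; and pick $\delta^\sharp>0$ much smaller than $1/(|T|\cdot\Upsilon(T\cup\{e_G\}))$ and than the inverse of $\sum_j\Upsilon(S_j)$.

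For a good enough sofic approximation $\pi:G\to\Sym(V)$ admitting some $\varphi\in\Hom_\mu(\sC^\sharp,F^\sharp,\delta^\sharp,\pi)$, set $W_0=\varphi(W)$, $\cV_{j,0}=\varphi(\cV_j)$, and let $E\subseteq V$ be the union of $V\setminus\bigcup_{g\in S}\pi_g W_0$ with the set of $w\in W_0$ for which some intermediate vertex $\pi_{g_k\cdots g_1}w$ escapes $\cV_{j_k,0}$ for one of the finitely many relevant data $(g,\omega,k)$. The $F^\sharp$-near-equivariance of $\varphi$ together with the covering property of $\{A_{g,\omega}\}_\omega$ yields $\m(E)=O(\delta^\sharp)$. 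Put $W^\ast=W_0\cup E$, $\cV_j^\ast=\cV_{j,0}$ for $j\le n$, $\cV_{n+1}^\ast=\bigcup_{g\in T\cup\{e_G\}}\pi_g E$, and $S_{n+1}=T\cup\{e_G\}$. Condition (ii) is immediate from $e_G\in S$; condition (i) follows by splitting the weighted sum into the first $n$ terms, which are bounded by $1/2$ via the $2\Upsilon$-slack combined with $|\m(\cV_{j,0})-\mu(\cV_j)|<\delta^\sharp$, and the last term, which is $O(|T|\Upsilon(T\cup\{e_G\})\delta^\sharp)<1/2$; condition (iii) holds because any $T$-edge within $W^\ast$ touching $E$ is an $S_{n+1}$-edge inside $\cV_{n+1}^\ast$, while for a $T$-edge $(w_1,\pi_g w_1)$ with both endpoints in $W_0\setminus E$ one has $w_1\in\varphi(A_{g,\omega})$ for some $\omega$, and then $v_k=\pi_{g_k\cdots g_1}w_1$ produces a path of length at most $C$ whose $k$-th edge is an $S_{j_k}$-edge with endpoints in $\cV_{j_k,0}$ by the very definition of $E$.

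The main technical obstacle is the estimate $\m(E)=O(\delta^\sharp)$: one iterates the $\Hom_\mu(\sC^\sharp,F^\sharp,\delta^\sharp,\pi)$ defining inequality along each of the finitely many group elements $g_k\cdots g_1\in F^\sharp$ and sums the resulting $\delta^\sharp$-errors over the finitely many atoms of $\sC^\sharp$. Because every count involved is determined by $T,C,n,S_1,\dots,S_n$ and is independent of $\pi$, fixing $\delta^\sharp$ sufficiently small at the outset absorbs all the errors into the slack manufactured by applying property SC with $2\Upsilon$ rather than $\Upsilon$.
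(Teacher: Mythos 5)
Your high-level plan coincides with the one the paper actually uses: apply property SC with $2\Upsilon$ to manufacture slack, transfer the Borel data through a partition homomorphism $\varphi\in\Hom_\mu(\sC^\sharp,F^\sharp,\delta^\sharp,\pi)$, delete a small error set, and absorb the error into one extra $\cV_{n+1}^\ast$ with $S_{n+1}=T\cup\{e_G\}$. The partition of $W\cap g^{-1}W$ into cells $A_{g,\omega}$ on which a single word datum works plays exactly the same role as the paper's $\sC_g$, and your $\sC^\sharp$ is essentially the paper's. There are, however, two places where the details do not close.

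First, a small technical point: in this paper a sofic approximation is merely a map $\pi:G\to\Sym(V)$, so $\pi_{e_G}$ is a permutation but need not be the identity. Enlarging $W_0$ by $V\setminus\bigcup_{g\in S}\pi_g W_0$ and invoking $e_G\in S$ therefore does \emph{not} give $\bigcup_{g\in S}\pi_g W^\ast=V$, since $\pi_{e_G}(V\setminus\bigcup_{g\in S}\pi_g W_0)$ need not contain $V\setminus\bigcup_{g\in S}\pi_g W_0$. One must enlarge by $\pi_{e_G}^{-1}(V\setminus\bigcup_{g\in S}\pi_g W_0)$ instead, as the paper does.

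Second, and more substantively, your verification of condition (iii) rests on the unproved claim that a $T$-edge $(w_1,\pi_g w_1)$ with both endpoints in $W_0\setminus E$ has $w_1\in\varphi(A_{g,\omega})$ for some $\omega$. Since the $A_{g,\omega}$ live inside $W\cap g^{-1}W$, this requires $w_1\in\varphi(W)\cap\varphi(g^{-1}W)$. But knowing $w_1\in\varphi(W)$ and $\pi_g w_1\in\varphi(W)$ does not give $w_1\in\varphi(g^{-1}W)$: the defining inequality for $\Hom_\mu$ only says $\pi_g\varphi(g^{-1}W)$ and $\varphi(W)$ agree up to measure $O(\delta^\sharp)$, so $w_1$ may sit in $\pi_g^{-1}\varphi(W)\setminus\varphi(g^{-1}W)$. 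Your $E$ is built only from the ``escape of an intermediate vertex from $\cV_{j,0}$'' failure and from the covering defect, and it catches nothing that forces $w_1$ to lie in the $\varphi$-image of any cell $A_{g,\omega}$ in the first place. The paper removes precisely this obstruction by also discarding the set $W'=\bigcup_{g\in T}\big(\pi_g^{-1}(\varphi(W)\cap V_{F^\sharp})\setminus\varphi(g^{-1}W)\big)$, whose size is controlled by $\m(W')\le|T|\delta^\sharp$. Your $E$ must be augmented by such a set; once it is, the measure estimate you sketch still holds and the argument becomes the paper's.
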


\begin{proof}
We may assume, by passing to a suitable $G$-invariant conull subset of $X$, that the action of $G$ is genuinely free.
Let $\Upsilon$ be a function $\cF(G)\rightarrow [0, \infty)$.
Since $G\curvearrowright (X, \mu)$ has property SC, using the function $2\Upsilon$ we find an  $S\in \overline{\cF}(G)$  such that for any $T\in \overline{\cF}(G)$ there are $C, n\in \Nb$, $S_1, \dots, S_n\in \overline{\cF}(G)$, and Borel subsets $W$ and $\cV_k$ of $X$ for $1\le k\le n$ satisfying the following conditions:
\begin{enumerate}
\item $2\sum_{k=1}^n \Upsilon(S_k)\mu(\cV_k)\le 1$,
\item $SW=X$,
\item if $w_1, w_2\in W$ satisfy $gw_1=w_2$ for some $g\in T$ then $w_1$ and $w_2$ are connected by a path of length at most $C$ in which each edge is an $S_k$-edge with both endpoints in $\cV_k$ for some $1\le k\le n$.
\end{enumerate}

Let $T\in \overline{\cF}(G)$. Then we have $C, n$, $S_k$ for $1\le k\le n$, and $W$ and $\cV_k$ for $1\le k\le n$ as above.
We now verify conditions (i)-(iii) in Definition~\ref{D-CSC 1} as referenced in Definition~\ref{D-CSC 3}.

Let $g\in T$. For each $x\in W\cap g^{-1}W$, we can find $g_1, \dots, g_l\in G$ for some $1\le l\le C$ such that $g=g_lg_{l-1}\cdots g_1$ and for each $1\le j\le l$ one has $g_j\in S_{k_j}$ and $g_{j-1}\cdots g_1x, g_jg_{j-1}\cdots g_1x\in \cV_{k_j}$ for some $1\le k_j\le n$.
Then we can find a finite Borel partition $\sC_g$ of $W\cap g^{-1}W$ such that
\[
|\sC_g|\le Cn^C\Big(\max_{1\le k\le n}|S_k|\Big)^C
\]
and for each $A\in \sC_g$ we can choose the same $l, g_1, \dots, g_l, k_1, \dots, k_l$ for all $x\in A$.
Then for all $1\le j\le l$ the sets $g_{j-1}\cdots g_1A$ and $g_jg_{j-1}\cdots g_1A$ are contained in $\cV_{k_j}$.

Denote by $\sC^\sharp$ the finite partition of $X$ generated by $W, \cV_1, \dots, \cV_n$ and $\sC_g$ for $g\in T$.
Set $F^\sharp=(T\cup S\cup \bigcup_{k=1}^nS_k)^{100C}\in \cF(G)$. Set $D=|T|C^2n^C(\max_{1\le k\le n}|S_k|)^C>0$, and take $\delta>0$ with $3\delta |T|\Upsilon(T)\le 1/4$.
Take
\[
0<\delta^\sharp\le \min \bigg\{ \bigg(4\sum_{k=1}^n\Upsilon(S_k)\bigg)^{-1} , \delta/(|S|(|T|+D+2)) \bigg\} .
\]

Let $\pi: G\rightarrow \Sym(V)$ be a sofic approximation for $G$ with
$\Hom_\mu(\sC^\sharp, F^\sharp, \delta^\sharp, \pi)\neq\emptyset$ which is good enough so that
$\m(V_{F^\sharp})>1-\delta^\sharp$, where $V_{F^\sharp}$ denotes the set of all $v\in V$ satisfying $\pi_{gh}v=\pi_g\pi_hv$ for all $g, h\in F^\sharp$ and $\pi_gv\neq \pi_h v$ for all distinct $g, h\in F^\sharp$. Take $\varphi\in \Hom_\mu(\sC^\sharp, F^\sharp, \delta^\sharp, \pi)$. Then $\varphi$ is an algebra homomorphism $\alg(\sC^\sharp_{F^\sharp})\rightarrow \Pb_V$ satisfying
\begin{enumerate}
\item[(i)] $\sum_{A\in \sC^\sharp}\m(\pi_g\varphi(A)\Delta \varphi(gA))\le \delta^\sharp$ for all $g\in F^\sharp$, and
\item[(ii)] $\sum_{A\in \sC^\sharp_{F^\sharp}}|\m(\varphi(A))-\mu(A)|\le \delta^\sharp.$
\end{enumerate}

Let $g\in T$ and $A\in \sC_g$. Then we have $l, g_1, \dots, g_l, k_1, \dots, k_l$ as above. Denote by $W_{g, A}''$ the set $\bigcup_{1\le j\le l}(\pi_{g_jg_{j-1}\cdots g_1}^{-1}(\varphi(g_jg_{j-1}\cdots g_1A))\Delta \varphi(A))$. Then
\begin{align*}
\m(W_{g, A}'')&\le \sum_{j=1}^l\m(\pi_{g_jg_{j-1}\cdots g_1}^{-1}(\varphi(g_jg_{j-1}\cdots g_1A))\Delta \varphi(A))\\
&=\sum_{j=1}^l\m(\varphi(g_jg_{j-1}\cdots g_1A)\Delta \pi_{g_jg_{j-1}\cdots g_1}\varphi(A))\le C\delta^\sharp.
\end{align*}
Also set $W_g''=\bigcup_{A\in \sC_g}W_{g, A}''$ and $W''=\bigcup_{g\in T}W_g''$. Then
\[
\m(W_g'')\le |\sC_g|C\delta^\sharp\le C^2n^C\Big(\max_{1\le k\le n}|S_k|\Big)^C\delta^\sharp
\]
and
\[
\m(W'')\le |T|C^2n^C\Big(\max_{1\le k\le n}|S_k|\Big)^C\delta^\sharp=D\delta^\sharp.
\]

Set $W'=\bigcup_{g\in T}(\pi_g^{-1}(\varphi(W)\cap V_{F^\sharp})\setminus \varphi(g^{-1}W))$. Note that
\[
W'=\bigcup_{g\in T}(\pi_{g^{-1}}(\varphi(W)\cap V_{F^\sharp})\setminus \varphi(g^{-1}W))\subseteq \bigcup_{g\in T}(\pi_{g^{-1}}\varphi(W)\setminus \varphi(g^{-1}W)),
\]
and hence
\[
\m(W')\le \sum_{g\in T}\m(\pi_{g^{-1}}\varphi(W)\Delta \varphi(g^{-1}W))\le |T|\delta^\sharp.
\]

Set $W^*=(\varphi(W)\cap V_{F^\sharp})\setminus (W'\cup W'')$, and $W^\dag=W^*\cup \pi_{e_G}^{-1}(V\setminus \bigcup_{g\in S}\pi_gW^*)$.
Then $\bigcup_{g\in S}\pi_gW^\dag=V$, verifying condition (ii) in Definition~\ref{D-CSC 1}.

We have
\begin{align*}
\m\bigg(\bigcup_{g\in S}\pi_gW^*\bigg)&\ge \m\bigg(\bigcup_{g\in S}\pi_g\varphi(W)\bigg)-|S|\m(W'\cup W''\cup(V\setminus V_{F^\sharp}))\\
&\ge \m\bigg(\varphi\bigg(\bigcup_{g\in S}gW\bigg)\bigg)-\m\bigg(\bigg(\bigcup_{g\in S}\pi_g\varphi(W)\bigg)\Delta \varphi\bigg(\bigcup_{g\in S}gW\bigg)\bigg) \\
&\hspace*{25mm} \ -|S|(|T|\delta^\sharp+D\delta^\sharp+\delta^\sharp)\\
&= 1-\m\bigg(\bigg(\bigcup_{g\in S}\pi_g\varphi(W)\bigg)\Delta \bigcup_{g\in S}\varphi(gW)\bigg)-|S|(|T|+D+1)\delta^\sharp\\
&\ge 1-\sum_{g\in S}\m(\pi_g\varphi(W)\Delta \varphi(gW))-|S|(|T|+D+1)\delta^\sharp\\
&\ge 1-|S|\delta^\sharp-|S|(|T|+D+1)\delta^\sharp\ge 1-\delta,
\end{align*}
and hence
\[
\m(W^\dag\setminus W^*)\le \m\bigg(V\setminus \bigcup_{g\in S}\pi_gW^*\bigg)\le \delta.
\]
Put $\cV_k^\dag=\varphi(\cV_k)$ for $1\le k\le n$, $S_{n+1}=T\in \overline{\cF}(G)$, and
\[
\cV_{n+1}^\dag=\bigcup_{g\in T}((W^\dag\setminus W^*)\cup \pi_g(W^\dag\setminus W^*)\cup \pi_g^{-1}(W^\dag\setminus W^*)).
\]
Then
\[
\m(\cV_{n+1}^\dag)\le (2|T|+1)\m(W^\dag\setminus W^*)\le 3\delta |T|,
\]
and hence
\begin{align*}
\sum_{k=1}^{n+1}\Upsilon(S_k)\m(\cV_k^\dag)\le 3\delta |T|\Upsilon(T)+\sum_{k=1}^n\Upsilon(S_k)(\mu(\cV_k)+\delta^\sharp)\le \frac14 + \frac12 +\delta^\sharp\sum_{k=1}^n\Upsilon(S_k)\le 1,
\end{align*}
verifying condition (i) in Definition~\ref{D-CSC 1}.

Let $g\in T$ and $w_1, w_2\in W^\dag$ with $\pi_gw_1=w_2$.
If $w_1\not\in W^*$ or $w_2\not\in W^*$, then $(w_1, w_2)$ is an $S_{n+1}$-edge with both endpoints in $\cV_{n+1}^\dag$.
Thus we may assume that $w_1, w_2\in W^*$.
Then $w_1=\pi_g^{-1}w_2\in \pi_g^{-1}(\varphi(W)\cap V_{F^\sharp})$. Since $w_1\notin W'$, we get $w_1\in \varphi(g^{-1}W)$. Thus
\[
w_1\in \varphi(W)\cap \varphi(g^{-1}W)=\varphi(W\cap g^{-1}W)=\varphi\bigg(\bigcup_{A\in \sC_g}A\bigg)=\bigcup_{A\in \sC_g}\varphi(A).
\]
We have $w_1\in \varphi(A)$ for some $A\in \sC_g$. Let $l, g_1, \dots, g_l, k_1, \dots, k_l$ be as above for this $A$.
Then for all $1\le j\le l$ the sets $g_{j-1}\dots g_1A$ and $g_jg_{j-1}\dots g_1A$ are contained in $\cV_{k_j}$.
Since $w_1\notin W_{g, A}''$, we have $\pi_{g_jg_{j-1}\dots g_1}w_1\in \varphi(g_jg_{j-1}\dots g_1A)$ for all $1\le j\le l$.
Thus $\pi_{g_{j-1}\dots g_1}w_1, \pi_{g_jg_{j-1}\dots g_1}w_1\in \varphi(\cV_{k_j})=\cV_{k_j}^\dag$ for all $1\le j\le l$.
Therefore $w_1$ and $w_2$ are connected by a path of length $l$ in which each edge is an $S_k$-edge with both endpoints in $\cV_k^\dag$ for some $1\le k\le n$, verifying condition (iii) in Definition~\ref{D-CSC 1}.
\end{proof}

\begin{theorem} \label{T-compare SC}
Consider the following conditions for an infinite $G$:
\begin{enumerate}
\item $G$ has property SC,
\item every free p.m.p.\ action $G\curvearrowright (X, \mu)$ has property SC,
\item there exists a nontrivial Bernoulli action of $G$ with property SC,
\item there exists a nontrivial Bernoulli action of $G$ with property sofic SC,
\item $G$ has property sofic SC,
\item $G$ is neither locally finite nor finitely generated and virtually free.
\end{enumerate}
We have (i)$\Leftrightarrow$(ii)$\Leftrightarrow$(iii)$\Rightarrow$(iv)$\Leftrightarrow$(v)$\Rightarrow$(vi).
Moreover, when $G$ is amenable all of these conditions are equivalent.
\end{theorem}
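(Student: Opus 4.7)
The plan is to prove the cycle $\mathrm{(i)}\Rightarrow\mathrm{(ii)}\Rightarrow\mathrm{(iii)}\Rightarrow\mathrm{(i)}$ together with the chain $\mathrm{(iii)}\Rightarrow\mathrm{(iv)}\Rightarrow\mathrm{(v)}\Rightarrow\mathrm{(vi)}$, and to close the circle in the amenable case by $\mathrm{(vi)}\Rightarrow\mathrm{(i)}$. The implications $\mathrm{(i)}\Rightarrow\mathrm{(ii)}\Rightarrow\mathrm{(iii)}$ are immediate: taking $\fC$ in Definition~\ref{D-SC} to be the singleton containing any given free p.m.p.\ action gives (ii), and every nontrivial Bernoulli action of an infinite group $G$ is free, yielding (iii). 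For the return $\mathrm{(iii)}\Rightarrow\mathrm{(i)}$ I would invoke a transfer result from \cite{KerLi19} showing that property SC of a single nontrivial Bernoulli action already supplies uniform witnesses $S, C, n, S_1, \dots, S_n$ across the entire class of free p.m.p.\ actions of $G$ (the underlying geometric input for SC depends only on the group).

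For $\mathrm{(iii)}\Rightarrow\mathrm{(iv)}$ I apply Proposition~\ref{P-SC to sofic SC} directly to the Bernoulli action furnished by (iii), assuming $G$ is sofic (otherwise (iv) and (v) are vacuous). The central step is $\mathrm{(iv)}\Rightarrow\mathrm{(v)}$, where I would exploit the universality of Bernoulli shifts for sofic modeling. For a nontrivial Bernoulli action $G\curvearrowright (K^G, \kappa^G)$ with finite base $K$ and $\mu = \kappa^G$, given any finite Borel partition $\sC^\sharp$, any finite $F^\sharp\subseteq G$ containing $e_G$, and any $\delta^\sharp>0$, every sufficiently good sofic approximation $\pi:G\to\Sym(V)$ satisfies $\Hom_\mu(\sC^\sharp, F^\sharp, \delta^\sharp, \pi)\neq\emptyset$, since random i.i.d.\ $\kappa$-distributed labelings $V\to K$ produce such microstates by the law of large numbers (cf.\ Section~10.6 of \cite{KerLi16}; any finite Borel partition may be approximated in measure by one measurable with respect to $\bigvee_{s\in F}s\alpha$ for $\alpha$ the coordinate partition and $F$ sufficiently large). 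Consequently the clause in Definition~\ref{D-CSC 3} restricting attention to $\pi$ with nonempty $\Hom$-set is automatically met by every good enough $\pi$, which upgrades property sofic SC of the Bernoulli action to property sofic SC of the group $G$ in the sense of Definition~\ref{D-CSC 1}.

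The implication $\mathrm{(v)}\Rightarrow\mathrm{(vi)}$ is the contrapositive of Propositions~\ref{P-locally finite group} and \ref{P-virtually free group}: locally finite $G$ is ruled out by the former, and infinite virtually cyclic $G$, being finitely generated virtually free, is ruled out by the latter. (If $G$ is not sofic then (v) is vacuous, but then $G$ satisfies (vi) automatically since locally finite and virtually cyclic groups are always sofic.) In the amenable case, one closes the cycle by $\mathrm{(vi)}\Rightarrow\mathrm{(i)}$; here I would invoke the construction in \cite{KerLi19} of sparse yet coarsely dense connected subgraphs inside suitable connected F{\o}lner subsets of a countable amenable group that is neither locally finite nor virtually cyclic, which is precisely the geometric input required for property SC. The substantive obstacles are this amenable verification and the Bernoulli-to-group transfer in $\mathrm{(iii)}\Rightarrow\mathrm{(i)}$, both of which rest on the machinery of \cite{KerLi19}; the remaining implications are either formal or proceed cleanly via Proposition~\ref{P-SC to sofic SC} and the obstructions of Section~\ref{SS-without}.
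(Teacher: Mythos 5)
Your proposal is correct and takes essentially the same route as the paper's proof, which cites Proposition 3.5 of \cite{KerLi19} for (i)$\Leftrightarrow$(ii)$\Leftrightarrow$(iii), Proposition~\ref{P-SC to sofic SC} for (iii)$\Rightarrow$(iv), the positivity of Bernoulli sofic entropy with respect to every sofic approximation sequence for (iv)$\Rightarrow$(v) (the same fact you derive from the i.i.d.\ microstate construction), Propositions~\ref{P-locally finite group} and \ref{P-virtually free group} for (v)$\Rightarrow$(vi), and Proposition 3.28 of \cite{KerLi19} for the amenable converse. The only slip is your restriction to finite base $K$ in (iv)$\Rightarrow$(v): the Bernoulli action furnished by (iv) need not have finite base, but the microstate nonemptiness holds for an arbitrary standard base, so this is cosmetic.
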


\begin{proof}
The equivalence of (i), (ii), and (iii) is the content of Proposition~3.5 of \cite{KerLi19}.
For (iii)$\Rightarrow$(iv) apply Proposition~\ref{P-SC to sofic SC}.
The implication (iv)$\Rightarrow$(v) follows from the fact that nontrivial Bernoulli actions have positive sofic entropy with respect to every sofic approximation sequence \cite{Bow10,KerLi11,Ker13},
while (v)$\Rightarrow$(iv) follows from the definitions 
and (v)$\Rightarrow$(vi) from Propositions~\ref{P-locally finite group}
and \ref{P-virtually free group}.

In the case that $G$ is amenable, Proposition~3.28 of \cite{KerLi19} asserts that (vi)$\Leftrightarrow$(i),
which gives us the equivalence of all of the conditions.
\end{proof}

A subgroup $G_0$ of $G$ is said to be {\it w-normal} in $G$ if there are a countable ordinal
$\gamma$ and a subgroup $G_\lambda$ of $G$ for each ordinal $0\le \lambda\le \gamma$
such that
\begin{enumerate}
\item for any $\lambda<\lambda'\le \gamma$ one has $G_\lambda\subseteq G_{\lambda'}$,
\item $G=G_\gamma$,
\item for each $\lambda<\gamma$ the group $G_\lambda$ is normal in $G_{\lambda+1}$,
\item for each limit ordinal $\lambda'\le \gamma$ one has $G_{\lambda'}=\bigcup_{\lambda<\lambda'}G_\lambda$.
\end{enumerate}
In conjunction with Theorem~\ref{T-compare SC} above, Theorem~3.29 of \cite{KerLi19} yields the following.

\begin{theorem}\label{T-w-normal amenable}
Suppose that $G$ has a w-normal subgroup $G_0$ which is amenable but neither locally finite nor virtually cyclic.
Then $G$ has property sofic SC.
\end{theorem}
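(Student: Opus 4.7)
The plan is to derive this by combining an external result about property SC for groups with a w-normal amenable subgroup with the chain of implications already assembled in Theorem~\ref{T-compare SC}. Specifically, Theorem~3.29 of \cite{KerLi19} is tailored to produce property SC (in the sense of Definition~\ref{D-SC}) for an ambient group $G$ from the existence of a w-normal amenable subgroup that is neither locally finite nor virtually cyclic, and Theorem~\ref{T-compare SC} tells us that property SC implies property sofic SC for any infinite group. So the entire proof is a two-step invocation.

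First I would feed the hypothesis of Theorem~\ref{T-w-normal amenable} directly into Theorem~3.29 of \cite{KerLi19}: since $G_0$ is a w-normal subgroup of $G$ which is amenable and neither locally finite nor virtually cyclic, this theorem yields that $G$ itself satisfies property SC, i.e.\ condition~(i) in the statement of Theorem~\ref{T-compare SC}. (Note that $G$ is automatically infinite because $G_0$ is already infinite, being neither locally finite nor virtually cyclic with the locally finite case ruled out, so Definition~\ref{D-SC} applies.) The substantial content is entirely concentrated in this citation; the constructive part of the argument — building the sets $W$ and $\cV_j$ inside the action itself — is done in \cite{KerLi19}.

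Second I would walk along the chain (i)$\Rightarrow$(ii)$\Rightarrow$(iii)$\Rightarrow$(iv)$\Rightarrow$(v) in Theorem~\ref{T-compare SC} to deduce property sofic SC for $G$. The key step here is the passage (iii)$\Rightarrow$(iv), which is exactly Proposition~\ref{P-SC to sofic SC}: property SC for a free p.m.p.\ action (in particular, for a nontrivial Bernoulli action of $G$, whose existence follows from (i)) implies property sofic SC for that action. The final step (iv)$\Rightarrow$(v) uses that a nontrivial Bernoulli action has positive sofic entropy with respect to every sofic approximation sequence, so the existence of such an action with property sofic SC forces $G$ itself to have property sofic SC.

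The potential obstacle is purely one of bookkeeping: one needs to check that the formulation of Theorem~3.29 of \cite{KerLi19} matches Definition~\ref{D-SC} here (with the sets $W$, $\cV_j$, the bound $C$, and the weight function $\Upsilon$), and that the appeal to Proposition~\ref{P-SC to sofic SC} is to a free p.m.p.\ action of $G$ (the Bernoulli action $G\curvearrowright (X^G,\mu^G)$ with a nontrivial standard base) rather than of the subgroup $G_0$. Once those alignments are confirmed, the proof collapses to a single line: property SC for $G$ (via \cite[Theorem~3.29]{KerLi19}) plus Theorem~\ref{T-compare SC} implies property sofic SC for $G$.
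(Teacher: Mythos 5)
Your proof matches the paper's argument exactly: the paper states Theorem~\ref{T-w-normal amenable} as a direct consequence of Theorem~3.29 of \cite{KerLi19} (which gives property SC for $G$) together with the implication chain in Theorem~\ref{T-compare SC} (which upgrades property SC to property sofic SC). The bookkeeping you flag — matching the formulation of the cited theorem to Definition~\ref{D-SC} and applying Proposition~\ref{P-SC to sofic SC} to a Bernoulli action of $G$ itself — is correct and is precisely how the chain in Theorem~\ref{T-compare SC} is used.
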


\subsection{W-normal subgroups and property sofic SC}\label{SS-normal}

The proof of the following lemma applies some of the ideas from Section~8.1 of \cite{Aus16}
to the sofic framework.

\begin{lemma}\label{L-tree sofic}
Suppose that $G$ is finitely generated and not virtually cyclic, and
let $A$ be a generating set for $G$ in $\overline{\cF}(G)$.
Then there is a constant $b>0$ such that
given any
\begin{enumerate}
\item group $H$ containing $G$ as a subgroup,

\item finite subset $F$ of $H$, and

\item $r, M\in\Nb$ and $\delta>0$
\end{enumerate}
one can find, for any
good enough sofic approximation $\pi: H\rightarrow \Sym(V)$ for $H$,
sets $Z\subseteq\cV\subseteq V_F$, where $V_F$ denotes the set of all $v\in V$ satisfying $\pi_{gh}=\pi_g\pi_hv$ for all $g, h\in F$ and $\pi_gv\neq \pi_hv$ for all distinct $g, h\in F$, such that $\big|\bigcup_{g\in A^{2r}}\pi_g\cV\big|/|V|\ge 1-\delta$, $|\cV|\le b|V|/r$, $|Z|\le |V|/M$, and every point of $\cV$ is connected to some point of $Z$ by an $A$-path of length at most $2M$ with all vertices in $\cV$.
\end{lemma}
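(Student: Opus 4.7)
The plan is to reduce the construction to a combinatorial problem inside the group $G$ itself, and then transfer the solution to $V$ via the sofic approximation. I would first seek subsets $\Omega_1 \subseteq \Omega_0 \subseteq A^{2L}$ of $G$, for an appropriately chosen scale $L \leq M$, satisfying:
\begin{enumerate}
\item $A^{2r}\Omega_0 \supseteq A^{2L}$,
\item $|\Omega_0|/|A^L| \leq b/r$ for a constant $b$ depending only on $G$ and $A$,
\item $|\Omega_1| \leq |A^L|/M$, and
\item every $g \in \Omega_0$ is joined to some element of $\Omega_1$ by an $A$-path of length at most $2M$ with all vertices in $\Omega_0$.
\end{enumerate}
Given such a template, the construction in $V$ is then a pullback: pick a maximal $(A,L)$-separated subset $Z^* \subseteq V_{F'}$ for an enlarged $F' \supseteq F\cdot A^{2M}$ (absorbed into the ``good enough sofic approximation'' clause), and set $Z := \bigcup_{z \in Z^*} \pi_{\Omega_1}(z)$ and $\cV := \bigcup_{z \in Z^*} \pi_{\Omega_0}(z)$. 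The estimate $|Z^*| \leq |V|/|A^L|$ combined with (ii) and (iii) gives the bounds on $|Z|$ and $|\cV|$; the maximality of $Z^*$ yields $V_{F'} \subseteq \bigcup_z \pi_{A^{2L}}(z)$, which with (i) gives the $(1-\delta)$-coverage; and the approximate multiplicativity of $\pi$ on $A^{2M}$ ensures that (iv) transfers to an $A$-path in $\cV$ of the same length.

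The main work lies in constructing the template, and here the hypothesis that $G$ is not virtually cyclic is decisive. By the Wolf--Gromov classification, $|A^n|$ grows at least quadratically in $n$. For $G$ of polynomial growth of degree $d \geq 2$, I would use a ``grid plus coordinate-axes'' construction in a ball $A^{2L}$, obtaining $\Omega_0$ with $|\Omega_0| \sim L^d/r^{d-1}$ and depth at most $dL$; tuning $L$ of order $M^{1/d}$ or $M$ then yields the four conditions with $b$ depending only on $G$ and $A$. For $G$ of super-polynomial growth, $|A^n|$ dominates any polynomial, giving much more slack: in particular, whenever $|A^r| \geq M$, the trivial template $\Omega_0 = \Omega_1 = \{e_G\}$ with $L = r$ already suffices, giving $\cV = Z = Z^*$ with the depth condition vacuous.

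The principal difficulty is producing a single constant $b$ that works uniformly across all pairs $(r, M)$, particularly in the remaining regime $|A^r| < M$ for groups of slow super-polynomial growth. Here the naive grid-plus-axes idea does not apply directly, and one must patch together templates by a case analysis: for very small $r$ the bound $b/r$ is loose enough that a thicker $\Omega_0$ works, while for moderate $r$ one tunes $L$ within the window permitted by the quadratic lower bound from Wolf--Gromov. Verifying that the transfer step respects all four conditions reduces to choosing $F^\sharp = F\cdot A^{cM}$ for an explicit constant $c$, which is accommodated by the ``good enough sofic approximation'' hypothesis.
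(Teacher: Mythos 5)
Your pullback-from-a-group-template strategy cannot work once $G$ has super-polynomial growth, which the lemma must cover (e.g.\ free groups, or the lamplighter group $\Zb_2\wr\Zb$). Any $\Omega_0\subseteq A^{2L}$ satisfying your condition (i), namely $A^{2r}\Omega_0\supseteq A^{2L}$, must have $|\Omega_0|\geq |A^{2L}|/|A^{2r}|$; combined with (ii), $|\Omega_0|\leq(b/r)|A^L|$, this forces $|A^{2L}|/|A^L|\leq b\,|A^{2r}|/r$, where the right-hand side is a fixed constant once $r$ and $b$ are chosen. At the same time your condition (iv) requires $\Omega_1\neq\emptyset$, which together with (iii) forces $|A^L|\geq M$ and hence $L\to\infty$ as $M\to\infty$. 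But the doubling ratio $|A^{2L}|/|A^L|$ stays bounded in $L$ only for polynomial growth; for exponential or intermediate growth it diverges, so no constant $b$ can survive $M\to\infty$ at fixed $r$. Your case analysis also misreads where the difficulty lies: faster growth makes the doubling ratio larger and the template harder, not easier, and the obstruction is not confined to the regime $|A^r|<M$ for ``slow'' super-polynomial growth.

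The paper's proof avoids all of this by constructing $\cV$ directly inside $V$ rather than pulling back a template from $G$. It takes a suitable $(A,r)$-separated set $T\subseteq V$ of size at most $|V|/|A^r|\leq|V|/(cr^2)$, considers the graph on $T$ whose edges are pairs joinable by $A$-paths of length at most $4r+1$, passes to a spanning forest, and defines $\cV$ as the union of chosen short $A$-paths realizing the forest's edges. A forest on $T$ has fewer than $|T|$ edges and each path contributes at most $4r$ new vertices, so $|\cV|\leq(4r+1)|T|\leq 5|V|/(cr)=b|V|/r$ with $b=5/c$, with no dependence whatsoever on the growth type of $G$. The $A$-connected components of $\cV$ contain whole components of the graph on $T$, which an auxiliary scale parameter $k$ arranges to have at least $|A|^{3M}$ vertices, and inside each component a maximal $(\rho,M)$-separated set supplies $Z$ with $|Z|\leq|V|/M$. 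The quadratic lower bound $|A^n|\geq cn^2$, the only place the non-virtually-cyclic hypothesis enters, is used solely in $|T|\leq|V|/(cr^2)$; no upper bound on growth is needed. The structural point your approach misses is that a separated set in $V$ has size controlled by $|V|/|A^r|$ because the sofic approximation ``folds'' $G$ into the finite set $V$, whereas a template confined to a single ball $A^{2L}$ in $G$ is charged the full boundary-to-interior ratio of that ball.
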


\begin{proof}
Since $G$ is not virtually cyclic, there exists a $c>0$ such that $|A^n| \geq cn^2$ for all $n\in\Nb$
(Corollary~3.5 of \cite{Man12}). Set $b=5/c$.
Let $H, F, r, M, \delta$, and $\pi$ be as in the lemma statement. Set $N=|A|^{3M}$. Take $k\in \Nb$ such that $|A^{kr}|\ge N|A^{2r}|$.

Denote by $V'$ the set of all $v\in V$ satisfying $\pi_{gh}v=\pi_g\pi_hv$ for all $g, h\in (F\cup A)^{100(M+r)}$ and $\pi_gv\neq \pi_hv$ for all distinct $g, h\in (F\cup A)^{100(M+r)}$.
Denote by $V''$ the set of all $v\in V$ satisfying $\pi_{gh}v=\pi_g\pi_hv$ for all $g, h\in (F\cup A)^{(200+k)(M+r)}$ and $\pi_gv\neq \pi_hv$ for all distinct $g, h\in (F\cup A)^{(200+k)(M+r)}$.
Then $\pi_gV''\subseteq V'$ for every $g\in A^{(k+6)r}$.
Assuming that $\pi$ is a good enough sofic approximation, we have $|V''|/|V|\ge 1-\delta$.
Take a maximal $(A,r)$-separated subset $W$ of $V''$, and also take a maximal $(A, r)$-separated subset $W'$ of $V'$ containing $W$. Then we have $\bigcup_{g\in A^{2r}}\pi_gW\supseteq V''$, and
hence
\[
\frac{1}{|V|} \bigg|\bigcup_{g\in A^{2r}}\pi_gW\bigg| \ge \frac{|V''|}{|V|} \ge 1-\delta.
\]

Let $w\in W$.  Set $T_w=W'\cap \pi_{A^{(k+2)r}}w$. Note that $\pi_{A^{kr}}w\subseteq V'\subseteq \pi_{A^{2r}}W'$. For each $g\in A^{kr}$ we have $\pi_gw\in \pi_{A^{2r}}z$  for some $z\in W'$. Then $z\in \pi_{A^{2r}}\pi_gw\subseteq \pi_{A^{(k+2)r}}w$, and hence $z\in T_w$. Thus 
\[
\pi_{A^{kr}}w\subseteq \pi_{A^{2r}}T_w.
\]
Therefore
\[
|T_w|\ge \frac{|A^{kr}|}{|A^{2r}|}\ge N.
\]

Set $T=\bigcup_{w\in W}T_w\subseteq W'$.
We have
\begin{align*}
|A^r| |W'|
= \bigg| \bigsqcup_{w\in W'} \pi_{A^r} w \bigg|
\leq |V|
\end{align*}
whence
\begin{align}\label{E-E2}
|T|\le |W'| \leq \frac{|V|}{|A^r|} \leq \frac{|V|}{cr^2} .
\end{align}

Let $w\in W$. Let $(T_w, E_w)$ be the graph whose edges are
those pairs of vertices which can be joined by an $A$-path of length at most $4r+1$, and let us
show that it is connected.
It is enough to demonstrate that a given $v\in T_w$ is connected to $w$ by a path in $(T_w, E_w)$.
Choose a shortest $A$-path from $w$ to $v$. For each vertex $z$ in this path contained in $\pi_{A^{kr}}w$, the fact that $z\in \pi_{A^{kr}}w\subseteq \pi_{A^{2r}}T_w$ means that
we can connect $z$ to some $u_z\in T_w$ by an $A$-path $p_z$ of length at most $2r$.
By inserting $p_z$ and its reverse at $z$, we construct an $A$-path
from $w$ to $v$ in which points of $T_w$ appear in every interval of length $4r+1$.
Therefore $v$ is connected to $w$ by some path in $(T_w, E_w)$, showing that $(T_w, E_w)$ is connected.

Consider the graph $(T , E )$ whose edges are
those pairs of vertices which can be joined by an $A$-path of length at most $4r+1$.
From the above, every connected component of this graph has at least $N$ points.
Starting with $(T , E )$,
we recursively build a sequence of graphs with vertex set $T$
by removing one edge at each stage so as to destroy some cycle at that stage,
until there are no more cycles left and we arrive at a subgraph $(T, E')$ such that $(T, E)$ and $(T, E')$
have the same connected components and each connected component of $(T, E')$ is a tree.

For each pair $(v,w)$ in $E'$, we choose an $A$-path in $\pi_{A^{4r}}T$ joining $v$ to $w$
of length at most $4r+1$.
Denote by $\cV$ the collection
of all vertices which appear in one of these paths.
Then $\cV\subseteq \pi_{A^{4r}}T\subseteq V'\subseteq V_F$.
Note that each $A$-connected component of $\cV$ has at least $N$ points, and $W\subseteq T\subseteq \cV$. Thus
\[
\frac{1}{|V|} \bigg|\bigcup_{g\in A^{2r}}\pi_g\cV\bigg|
\ge \frac{1}{|V|} \bigg|\bigcup_{g\in A^{2r}}\pi_gW\bigg|
\ge 1-\delta.
\]
Moreover, using \eqref{E-E2} we have
\begin{align*}
|\cV| \leq |T|+4r|E'| \leq (4r+1)|T|
\leq 5r\cdot \frac{|V|}{cr^2} =\frac{b|V|}{r} .
\end{align*}

Let $\cC$ be an $A$-connected component of $\cV$. Denote by $(\cC, E_{\cC})$ the graph whose edges are the pairs
$(w, v)\in \cC^2$ such that $\pi_gw=v$ for some $g\in A$. Then $(\cC, E_{\cC})$ is connected.
Endow $\cC$ with the geodesic distance $\rho$ induced from $E_{\cC}$.
Take a maximal subset $Z_{\cC}$ of $\cC$ which is $(\rho, M)$-separated
in the sense that the $M$-balls $\{ v\in \cC : \rho (v,z) \leq M \}$ for $z\in Z_{\cC}$
are pairwise disjoint.
Then $Z_{\cC}$ is $(\rho, 2M)$-spanning in $\cC$, i.e., every point of $\cC$ is connected to some point of $Z_\cC$ by an $A$-path of length at most $2M$ with all vertices in $\cC$. Since $|\cC|\ge N=|A|^{3M}>|A|^{2M}$,  we have $|Z_{\cC}|\ge 2$. Then $|\cC\cap \pi_{A^{M}}z|\ge M$ for every $z\in Z_{\cC}$.
Since the sets $\cC\cap \pi_{A^M}z$ for $z\in Z_{\cC}$ are pairwise disjoint, we get
\[
|Z_{\cC}|M\le \sum_{z\in Z_{\cC}}|\cC\cap \pi_{A^M}z|\le |\cC|.
\]

Denote by $Z$ the union of the sets $Z_{\cC}$ where $\cC$ runs over all $A$-connected components of $\cV$. Then every point of $\cV$ is connected to some point of $Z$ by an $A$-path of length at most $2M$ with all vertices in $\cV$, and $|Z|/|V|\le 1/M$.
\end{proof}

For the definition of w-normality, see the paragraph before Theorem~\ref{T-w-normal amenable}.

\begin{proposition} \label{P-normal sofic SC}
Suppose that $G$ has a w-normal subgroup $G^\flat$ with property sofic SC.
Then $G$ has property sofic SC.
\end{proposition}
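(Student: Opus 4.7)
The plan is to argue by transfinite induction along the w-normal chain $G^\flat = G_0 \subseteq G_1 \subseteq \cdots \subseteq G_\gamma = G$, establishing property sofic SC for each $G_\lambda$ with the \emph{same} choice of $S \in \overline{\cF}(G^\flat)$ propagated unchanged up the chain. The base case $\lambda = 0$ is the hypothesis, which fixes $S = S^\flat$ for the function $\Upsilon|_{\cF(G^\flat)}$. A limit ordinal $\lambda'$ is handled by cofinality: given $T \in \overline{\cF}(G_{\lambda'})$, pick $\mu < \lambda'$ with $S \cup T \subseteq G_\mu$ and apply the inductive hypothesis for $G_\mu$, noting that a good enough sofic approximation of $G_{\lambda'}$ restricts to one of $G_\mu$.

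The heart of the argument is the successor step: if $N := G_\lambda$ is normal in $H := G_{\lambda+1}$ with property sofic SC at $S^N = S$, prove the same for $H$. Since $N$ has property sofic SC, Propositions \ref{P-locally finite group} and \ref{P-virtually free group} force $N$ to be neither locally finite nor virtually cyclic, so it contains a finitely generated subgroup $G_1$ that is not virtually cyclic; fix a symmetric generating set $A \in \overline{\cF}(G_1)$. Given $\Upsilon : \cF(H) \to [0,\infty)$ and $T \in \overline{\cF}(H)$, write each $g \in T$ as $g = \sigma_g h_g$ with $\sigma_g$ from a fixed finite set $R \ni e_H$ of coset representatives of $N$ in $H$ and $h_g \in N$. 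Feed the symmetric set $\{h_g, h_g^{-1} : g \in T\} \cup (T \cap N) \in \overline{\cF}(N)$ into property sofic SC of $N$ (applied to $\Upsilon|_{\cF(N)}$ scaled by $2$) to obtain constants $C^N, n^N$ and sets $S_1^N, \dots, S_{n^N}^N \in \overline{\cF}(N)$.

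For any good enough sofic approximation $\pi : H \to \Sym(V)$, I would build $W, \cV_j$ as follows. Apply property sofic SC of $N$ to $\pi|_N$ to extract $W^N$ and $\cV_1^N, \dots, \cV_{n^N}^N$, and set $W = W^N$. Independently invoke Lemma \ref{L-tree sofic} with $(G_1, H)$ and parameters $r, M$ chosen large (depending on $\Upsilon(A)$, $|R|$, and $\max_{\sigma \in R} \Upsilon(\{\sigma\})$), obtaining a sparse tree skeleton $\cV^{\rm tr}$ with root set $Z$ satisfying $|\cV^{\rm tr}| \le b|V|/r$ and $|Z| \le |V|/M$. A $T$-edge $(w_1, w_2) = (w_1, \pi_{\sigma_g h_g} w_1)$ is routed through: an $N$-path inside some $\cV_j^N$ from $w_1$ to a point approximating $\pi_{h_g} w_1$; an $A^{2r}$-step into $\cV^{\rm tr}$; a tree path of length at most $2M$ to a root $z \in Z$; a single $\sigma_g$-edge at $z$; and symmetric mirror steps ending at $w_2$. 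Normality of $N$ in $H$ is used to carry the $N$-path construction across the coset jump. The additional $(S_j, \cV_j)$ pairs are $(A, \cV^{\rm tr})$ for the tree steps and $(\{\sigma\}, Z_\sigma)$ for each $\sigma \in R$, where $Z_\sigma$ is a small neighborhood of $Z$ of mass $O(1/M)$.

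The main obstacle is verifying the budget condition (i) of Definition \ref{D-CSC 1}. The within-$N$ contributions $\sum_j \Upsilon(S_j^N) \m(\cV_j^N)$ are absorbed by the factor-$2$ pre-scaling and total at most $1/2$. The tree contribution $\Upsilon(A) \m(\cV^{\rm tr}) = O(\Upsilon(A)/r)$ is made at most $1/4$ by choosing $r$ large, and the coset-jump contribution $\sum_{\sigma \in R} \Upsilon(\{\sigma\}) \m(Z_\sigma) = O(|R| \max_\sigma \Upsilon(\{\sigma\})/M)$ is made at most $1/4$ by choosing $M$ large. The uniform path length $C = C^N + 4M + 4r + O(1)$ is independent of the sofic approximation. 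Condition (ii) is inherited from property sofic SC of $N$ through $S = S^N$, while condition (iii) is built into the routing above.
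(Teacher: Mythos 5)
Your high-level plan is close in spirit to the paper's argument (normality, tree skeleton from Lemma~\ref{L-tree sofic}, routing $T$-edges through a sparse root set, transfinite induction with a fixed $S$), but there are two genuine gaps, one of which is fatal to the successor step as stated.

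First, your inference that $N$ ``contains a finitely generated subgroup $G_1$ that is not virtually cyclic'' does not follow from $N$ being neither locally finite nor virtually cyclic. The group $\mathbf{Z}[1/2]$ is neither locally finite nor virtually cyclic, has property sofic SC by Theorem~\ref{T-compare SC} (it is amenable), yet every finitely generated subgroup is infinite cyclic; so no such $G_1$ exists and Lemma~\ref{L-tree sofic} cannot be applied. This is exactly why the paper splits into cases, disposing of the possibility that $G^\flat$ is locally virtually cyclic at the outset by routing through Theorem~\ref{T-w-normal amenable}, and only invoking the tree lemma when $G^\flat$ has a finitely generated non-virtually-cyclic subgroup. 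Your proposal needs this case split; without it the successor step cannot even begin.

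Second, and more centrally, the routing in your successor step does not close. You want to connect $w_1$ to $w_2 = \pi_{\sigma_g h_g}w_1$ by first following an $N$-path from $w_1$ to ``a point approximating $\pi_{h_g}w_1$.'' But $N$'s property sofic SC only supplies short paths between pairs of points \emph{both} in $W^N$ that are related by a $T^N$-move; there is no guarantee that $\pi_{h_g}w_1$ lies in $W^N$. More importantly, after the single $\sigma_g$-edge at $z\in Z$, the ``symmetric mirror steps ending at $w_2$'' must run through $\pi_{\sigma_g}$-translates of the tree and of the $N$-path; these edges are $\sigma_g A\sigma_g^{-1}$-edges and $\sigma_g T^N\sigma_g^{-1}$-edges whose vertices lie in $\pi_{\sigma_g}(\mathcal{V}^{\rm tr})$ and $\pi_{\sigma_g}(\mathcal{V}_j^N)$, none of which are in your declared $\mathcal{V}$'s. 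You would have to add all these conjugated sets to the budget and account for $\Upsilon(\sigma A\sigma^{-1})$, $\Upsilon(\sigma T^N\sigma^{-1})$, etc.; crucially, the conjugated translates of $\mathcal{V}_j^N$ are controlled only by $\mu(\mathcal{V}_j^N)$, which you cannot shrink by choice of $r,M$, so the budget breaks. The paper avoids this altogether by routing the coset jump through two $S_2$-edges anchored at the root set $Z$ (so the expensive edges are charged to $\bigcup_{g\in S_2}\pi_g Z$, whose mass is $O(1/M)$), and then closing the loop \emph{inside} $W^\flat$ by observing that $a_2^{-1}t_2^{-1}g\,t_1\,g^{-1}a_1$ lies in $G^\flat$ by normality and belongs to a preselected $T^\flat$ of the form $\bigcup_{g\in T}(S^\flat S_1^{2M}gS_1^{2M}g^{-1}S^\flat\cup\cdots)$. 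Your $T^N=\{h_g,h_g^{-1}:g\in T\}\cup(T\cap N)$ contains none of these conjugates, so $N$'s property sofic SC cannot bridge the two endpoints of your detour; the conjugation must be built into the set fed to the subgroup's SC property, as in the paper.
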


\begin{proof}
Suppose first that $G^\flat$ is locally virtually cyclic. Then $G^\flat$ is amenable and,
by Lemma~\ref{L-finite group} and Theorem~\ref{T-compare SC}, neither locally finite nor virtually cyclic.
It follows by Theorem~\ref{T-w-normal amenable} that $G$ has property sofic SC, as desired.

Suppose now that $G^\flat$ is not locally virtually cyclic. In this case we will first carry out the argument
under the assumption that $G^\flat$ is normal in $G$.
Take a finitely generated subgroup $G_0$ of $G^\flat$ such that $G_0$ is not virtually cyclic.
Take an $S_1\in \overline{\cF}(G_0)$ generating $G_0$.
Let $b > 0$ be as given by Lemma~\ref{L-tree sofic}
for the group $G_0$ and generating set $S_1$.

Let $\Upsilon$ be a function $\cF(G)\rightarrow [0, \infty)$.
Choose an $r\in\Nb$ large enough so that
\begin{align}\label{E-r}
3b\Upsilon(S_1)\leq r.
\end{align}
Set $S=S_1^{2r}\in \overline{\cF}(G_0)$.

Consider the restriction of $3\Upsilon$ to $\cF(G^\flat)$. Since $G^\flat$ has property sofic SC,
there exists an $S^\flat\in \overline{\cF}(G^\flat)$ such that for any $T^\flat\in \overline{\cF}(G^\flat)$ there are $C^\flat , n^\flat\in \Nb$ and $S^\flat_1, \dots, S^\flat_{n^\flat}\in \overline{\cF}(G^\flat)$ such that for any good enough sofic approximation $\pi: G\rightarrow \Sym(V)$ for $G$ there are subsets $W^\flat$ and $\cV^\flat_k$ of $V$ for $1\le k\le n^\flat$   satisfying the following conditions:
\begin{enumerate}
\item $\sum_{k=1}^{n^\flat} 3\Upsilon(S^\flat_{k})\m(\cV^\flat_{k})\le 1$,
\item $\bigcup_{g\in S^\flat}\pi_gW^\flat=V$,
\item if $w_1, w_2\in W^\flat$ satisfy $\pi_gw_1=w_2$ for some $g\in T^\flat$ then $w_1$ and $w_2$ are connected by a path of length at most $C^\flat$ in which each edge is an $S^\flat_k$-edge with both endpoints in $\cV^\flat_k$ for some $1\le k\le n^\flat$.
\end{enumerate}

Let $T\in \overline{\cF}(G)$.
Set
\[
S_2=S^\flat T S^\flat \in \overline{\cF}(G).
\]
Take an $M\in \Nb$ large enough so that
\begin{align}\label{E-M large1}
M\ge 12\Upsilon(S_2)|S_2|.
\end{align}

Set $T^\flat =\bigcup_{g\in T}(S^\flat S_1^{2M}gS_1^{2M}g^{-1}S^\flat\cup S^\flat gS_1^{2M}g^{-1}S_1^{2M}S^\flat)\in \overline{\cF}(G^\flat)$. Then we have $C^\flat$, $n^\flat$, and $S^\flat_{k}$ for $1\le k\le n^\flat$ as above. Set
\[
C=4M+2+C^\flat\in \Nb,
\]
and $F=(S_1\cup T\cup S^\flat\cup \bigcup_{k=1}^{n^\flat}S_k^\flat)^{100MCr}\in \cF(G)$.

Now let $\pi: G\rightarrow \Sym(V)$ be a good enough sofic approximation for $G$.
By Lemma~\ref{L-tree sofic} we can find sets $Z\subseteq\cV_1\subseteq V_F$, where $V_F$ denotes the set of $v\in V$ satisfying $\pi_{gh}=\pi_g\pi_hv$ for all $g, h\in F$ and $\pi_gv\neq \pi_hv$ for all distinct $g, h\in F$, such that $\m\big(\bigcup_{g\in S}\pi_g\cV_1\big)\ge 1-1/M$,  $\m(\cV_1)\le b/r$, $\m(Z)\le 1/M$, and every point of $\cV_1$ is connected to some point of $Z$ by an $S_1$-path of length at most $2M$ with all vertices in $\cV_1$.
Note that
\[
\Upsilon(S_1)\m(\cV_1)\le \Upsilon(S_1)\frac{b}{r}\overset{\eqref{E-r}}{\le} \frac{1}{3}.
\]
Set $W=\cV_1\cup \pi_{e_G}^{-1}(V\setminus \bigcup_{g\in S}\pi_g \cV_1)\subseteq V$. Then $\bigcup_{g\in S}\pi_g W=V$, which verifies condition (ii) in Definition~\ref{D-CSC 1}.

Set $\cV_2=\big(\bigcup_{g\in T}((W\setminus \cV_1)\cup \pi_g (W\setminus \cV_1)\cup \pi_g^{-1}(W\setminus \cV_1))\big)\cup \bigcup_{g\in S_2}\pi_g Z\subseteq V$.
Then
\begin{align*}
\m(\cV_2)\le 3|T|\m(W\setminus \cV_1)+|S_2|\m(Z)\le 3|S_2|\m(V\setminus \bigcup_{g\in S}\pi_g \cV_1)+|S_2|/M\le 4|S_2|/M,
\end{align*}
and hence
\[
\Upsilon(S_2)\m(\cV_2)\le 4\Upsilon(S_2) |S_2|/M\overset{\eqref{E-M large1}}{\le}
\frac{1}{3}.
\]
Assuming that $\pi$ is a good enough sofic approximation for $G$, we have
$W^\flat$ and $\cV^\flat_{k}$ for $1\le k\le n^\flat$ as above, in which case
\[
\sum_{k=1}^{n^\flat}\Upsilon(S^\flat_{k})\m(\cV^\flat_{k})\le \frac{1}{3}.
\]
Putting the above estimates together we get
\[
\Upsilon(S_1)\m(\cV_1)+\Upsilon(S_2)\m(\cV_2)+\sum_{k=1}^{n^\flat}\Upsilon(S^\flat_{k})\m(\cV^\flat_{k})\le 1,
\]
which verifies condition (i) in Definition~\ref{D-CSC 1}.

Let $g\in T$ and $w_1, w_2\in W$ be such that $\pi_gw_1=w_2$. If either $w_1\in W\setminus \cV_1$ or $w_2\in W\setminus \cV_1$, then $(w_1, w_2)$ is an $S_2$-edge with both endpoints in $\cV_2$. Thus we may assume that $w_1, w_2\in \cV_1$.
For $i=1, 2$, we can connect
$w_i$  to some $z_i\in Z$  by an $S_1$-path of length at most $2M$ with all vertices in $\cV_1$.
Then $w_i=\pi_{t_i}z_i$ for some $t_i\in S_1^{2M}$.
We have $\pi_gz_1=\pi_{a_1} u_1$ for some $u_1\in W^\flat$ and $a_1\in S^\flat$, and $z_2= \pi_{a_2} u_2$ for some $u_2\in W^\flat$ and $a_2\in S^\flat$.
Note that $a_1^{-1}g$ and $a_2^{-1}$ are both in $S_2$.
Since $\pi_{a_1^{-1}g}z_1=u_1$, the pair $(z_1, u_1)$ is an $S_2$-edge with both endpoints in $\cV_2$. Also, since $\pi_{a_2^{-1}}z_2=u_2$ the pair $(w_2, u_2)$ is an $S_2$-edge with both endpoints in  $\cV_2$.
Note that
\begin{align*}
\pi_{a_2^{-1}t_2^{-1}gt_1g^{-1}a_1} u_1&=\pi_{a_2^{-1}}\pi_{t_2^{-1}}\pi_{g}\pi_{t_1}\pi_{g^{-1}}\pi_{a_1} u_1\\
&=\pi_{a_2^{-1}}\pi_{t_2^{-1}}\pi_g\pi_{t_1}z_1\\
&=\pi_{a_2^{-1}}\pi_{t_2^{-1}}\pi_g w_1\\
&=\pi_{a_2^{-1}}\pi_{t_2^{-1}}w_2\\
&=\pi_{a_2^{-1}}z_2\\
&=u_2.
\end{align*}
Since $a_2^{-1}t_2^{-1}gt_1g^{-1}a_1\in S^\flat S_1^{2M}gS_1^{2M}g^{-1}S^\flat\subseteq T^\flat$, this means that $u_2\in \pi_{T^\flat}u_1$. Then $u_1$ and $u_2$ are connected by a path of length at most $C^\flat$ in which each edge is an $S^\flat_k$-edge with both endpoints in $\cV^\flat_k$ for some $1\le k\le n^\flat$.
Therefore $w_1$ and $w_2$ are connected by a path of length at most $4M+2+C^\flat=C$ in which each edge is either an $S_j$-edge with both endpoints in $\cV_j$ for some $1\le j\le 2$ or an $S^\flat_k$-edge with both endpoints in $\cV^\flat_k$ for some $1\le k\le n^\flat$,
verifying condition (iii) in Definition~\ref{D-CSC 1}.

Notice that the set $S$ used in the above verification of property sofic SC for $G$
is contained in $G_0$, which can be any non-virtually-cyclic finitely generated subgroup of $G^\flat$,
and only depends on the restriction of $\Upsilon$ to $\overline{\cF}(G_0)$.
This has the consequence that if $G_1 , G_2 , \dots$ is a sequence of countable groups
such that $G_n$ is a normal subgroup of $G_{n+1}$ for each $n$
and $G_1$ is not locally virtually cyclic and has property sofic SC
then the group $\bigcup_{n=1}^\infty G_n$ has property sofic SC.
Indeed we can fix a finitely generated subgroup $G_1'$ of $G_1$ which is not virtually cyclic
and apply the above argument recursively taking $G_0 = G_1'$, $G^\flat = G_n$, and $G=G_{n+1}$ at the $n$th stage
to deduce that $G_{n+1}$ has property sofic SC, and if the function $\Upsilon$ is taken at each stage
to be the restriction of a prescribed function
$\cF (\bigcup_{n=1}^\infty G_n ) \to [0,\infty )$
then we can use the same set $S$ for all $n$, showing that $\bigcup_{n=1}^\infty G_n$
has property sofic SC.
It follows by ordinal well-ordering that if $G^\flat$ is merely assumed to be w-normal in $G$ then
we can still conclude that $G$ has property sofic SC.
\end{proof}

\subsection{Product groups}\label{SS-products}

Let $G$ and $H$ be countable groups. Let $\pi : G\to\Sym (V)$ and $\sigma : H\to\Sym (W)$
be sofic approximations. The {\it product sofic approximation} $\pi\times\sigma : G\times H \to \Sym (V\times W)$
is defined by
\[
(\pi\times\sigma )_{(g,h)} (v,w) = (\pi_g (v) , \sigma_h (w))
\]
for all $g\in G$, $h\in H$, $v\in V$, and $w\in W$.
Note that if $\{ \pi_k \}$ and $\{ \sigma_k \}$ are sofic approximation sequences for $G$ and $H$,
respectively, then $\{ \pi_k\times\sigma_k \}$ is a sofic approximation sequence for $G\times H$.

\begin{proposition}\label{P-product}
Let $G$ and $H$ be countably infinite groups.
Let $\sS$ be the collection of product sofic approximations for $G\times H$.
Then $G\times H$ has property $\sS$-SC if and only if at least one of $G$ and $H$ is not locally finite.
\end{proposition}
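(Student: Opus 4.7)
\emph{``Only if'' direction.} If both $G$ and $H$ are locally finite, then so is $G\times H$. Since products of arbitrarily good sofic approximations for $G$ and $H$ are arbitrarily good sofic approximations for $G\times H$, the collection $\sS$ contains arbitrarily good sofic approximations, and Proposition~\ref{P-locally finite group} gives the failure of property $\sS$-SC.

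\emph{``If'' direction.} Suppose without loss of generality that $G$ is not locally finite; then $G\times H$ is neither locally finite nor virtually cyclic. The plan is to dichotomize on whether $G\times H$ is locally virtually cyclic. If it is, then $G\times H$ is amenable (a countable group all of whose finitely generated subgroups are virtually cyclic, and hence amenable, is itself amenable), and Theorem~\ref{T-compare SC} delivers property sofic SC, which in particular gives $\sS$-SC. Otherwise, $G\times H$ has a finitely generated non-virtually-cyclic subgroup, and a short case analysis lets us take this to be $K=G_0\times H_0$ with $G_0\le G$ finitely generated and infinite, $H_0\le H$ finitely generated, and either $G_0$ non-virtually-cyclic (with $H_0$ trivial) or $H_0$ infinite; note that the second alternative forces $H$ to be not locally finite, which is automatic in the non-loc-v.c.\ case whenever $G$ itself happens to be locally virtually cyclic. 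For a given product sofic approximation $\pi\times\sigma$ on $V\times W$, I would apply Lemma~\ref{L-tree sofic} to $K$ viewed inside $G\times H$, with a symmetric generating set $A$ of $K$, obtaining thin subsets $Z\subseteq\cV\subseteq V\times W$ satisfying the sparse-connectivity conclusion of that lemma. The set $W$ and the sets $\cV_j$ of Definition~\ref{D-CSC 1} would then be assembled from $\cV$ together with thin ``bridge'' sets exploiting the product structure: because $\pi\times\sigma$ acts on the two coordinates independently, moves in the $H$-coordinate are free inside any product-shaped $\cV$, and combining them with the $K$-paths from Lemma~\ref{L-tree sofic} yields bounded-length paths connecting $T$-translated pairs in $W$.

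\emph{Main obstacle.} The delicate point will be arranging $\sum_j\Upsilon(S_j)\m(\cV_j)\le 1$ uniformly in $T$ while $S$ must be fixed in advance of $T$. Following the template of Proposition~\ref{P-normal sofic SC}'s proof, I expect a multi-layer construction: one layer of density $\lesssim b/r$ (from Lemma~\ref{L-tree sofic}, with $r$ chosen large enough to absorb $\Upsilon$ evaluated at the generating set $A$ of $K$) carries the $K$-path structure, and further thin bridge layers of density scaling inversely with $\Upsilon$ applied to the relevant $T$-dependent subsets handle $T$-translates lying outside $K$. Here the product structure of the sofic approximation plays the role that w-normality plays in Proposition~\ref{P-normal sofic SC}: it allows $G$- and $H$-direction moves to be handled independently within product-shaped $\cV_j$'s, giving the required bounded-length path decomposition and thereby establishing property $\sS$-SC.
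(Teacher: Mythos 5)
Your ``only if'' direction is correct and matches the paper. For the ``if'' direction, the dichotomy you set up (locally virtually cyclic versus not) is sound, and reducing the first branch to Theorem~\ref{T-compare SC} is correct; your case analysis producing a finitely generated non-virtually-cyclic subgroup $K=G_0\times H_0$ of $G\times H$ is also fine. The gap is in the execution of the second branch. You invoke Lemma~\ref{L-tree sofic} applied to $K$ and claim that the product structure of the sofic approximation ``plays the role that w-normality plays in Proposition~\ref{P-normal sofic SC}.'' This analogy does not hold. The step in Proposition~\ref{P-normal sofic SC} that turns on normality is the observation that $a_2^{-1}t_2^{-1}gt_1g^{-1}a_1\in G^\flat$, which is what lets one import the sofic SC structure of $G^\flat$ to finish the path. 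Here $K=G_0\times H_0$ is not normal in $G\times H$, and the fact that $\pi\times\sigma$ acts coordinate-wise on $V\times W$ supplies no group-level conjugation-invariance to replace it. Your further assertion that ``moves in the $H$-coordinate are free inside any product-shaped $\cV$'' is also false on its face: an $(e_G,h)$-edge keeps you inside a product set $\cV_G\times\cV_H$ only if $\sigma_h$ maps $\cV_H$ back into $\cV_H$, which for a thin $\cV_H$ essentially never happens. So the ``main obstacle'' you flag --- bounding $\sum_j\Upsilon(S_j)\m(\cV_j)\le 1$ while decomposing arbitrary $T$-edges into short paths in thin sets --- is not merely delicate but genuinely unresolved, and the proposal is incomplete.

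The paper's actual route bypasses these difficulties entirely. It takes nontrivial Bernoulli actions $G\curvearrowright(X,\mu)$ and $H\curvearrowright(Y,\nu)$, applies Proposition~3.32 of \cite{KerLi19} to conclude that the product p.m.p.\ action $G\times H\curvearrowright(X\times Y,\mu\times\nu)$ has property SC, and then uses Proposition~\ref{P-SC to sofic SC} to upgrade this to property sofic SC for that p.m.p.\ action. Definition~\ref{D-CSC 3} then supplies the required $W,\cV_j$ for every good enough sofic approximation $\pi$ of $G\times H$ satisfying $\Hom_{\mu\times\nu}(\sC^\sharp,F^\sharp,\delta^\sharp,\pi)\neq\emptyset$. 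The only place the product structure of $\pi\times\sigma$ is used is to verify this nonemptiness: since Bernoulli actions admit microstates along every good enough sofic approximation (by \cite{Bow10,Ker13}), a $\varphi\in\Hom_\mu(\sC,F,\delta,\pi)$ and a $\psi\in\Hom_\nu(\sD,L,\delta,\sigma)$ combine into a $\zeta\in\Hom_{\mu\times\nu}(\sC\times\sD,F\times L,2\delta,\pi\times\sigma)$, and a density-of-partitions argument extends this to arbitrary finite partitions of $X\times Y$. The nonemptiness hypothesis in Definition~\ref{D-CSC 3} is thus automatic for product sofic approximations, which is precisely what property $\sS$-SC for the group asks for. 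You should study this reduction, as it is much lighter than trying to run a tree-sofic construction on $V\times W$ directly.
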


\begin{proof}
If $G$ and $H$ are both locally finite then $G\times H$ is locally finite and hence does not have
property $\sS$-SC by Proposition~\ref{P-locally finite group}.
Suppose then that at least one of $G$ and $H$ is not locally finite.
Take two nontrivial Bernoulli actions $G\curvearrowright (X,\mu )$ and $H\curvearrowright (Y,\nu )$.
By Proposition~3.32 of \cite{KerLi19} the p.m.p.\ action
$G\times H\curvearrowright (X \times Y ,\mu \times\nu)$
given by $(g,h)(x,y) = (gx,hy)$ for all $g\in G$, $h\in H$, $x\in X$, and $y\in Y$
has property SC, and hence has property sofic SC by Proposition~\ref{P-SC to sofic SC}.

By \cite{Bow10,Ker13}, for every finite partition $\sC$ of $X$, $F\in\cF (G)$ containing $e_G$,
and $\delta > 0$ one has $\Hom_\mu (\sC , F,\delta ,\pi ) \neq \emptyset$
for every sufficiently good sofic approximation $\pi$ for $G$,
and for every finite partition $\sD$ of $Y$, $L\in\cF (H)$ containing $e_H$, and $\delta > 0$ one has $\Hom_\nu (\sD , L,\delta ,\sigma ) \neq \emptyset$
for every sufficiently good sofic approximation $\sigma$ for $H$.
Given such sofic approximations $\pi : G\to\Sym (V)$ and $\sigma : H\to\Sym (W)$
and $\varphi\in\Hom_\mu (\sC , F,\delta ,\pi )$ and $\psi\in\Hom_\nu (\sD , L,\delta ,\sigma )$
we have a homomorphism $\zeta : \alg (\sC_F \times \sD_L ) = \alg ((\sC\times\sD )_{F\times L} )\to \Pb_{V\times W}$
determined by $\zeta (C\times D) = \varphi (C)\times \psi (D)$ for $C\in\sC_F$ and $D\in\sD_L$,
and one can readily verify that $\zeta$ belongs to
$\Hom_{\mu\times\nu} (\sC\times\sD , F\times L,2\delta ,\pi\times\sigma )$,
showing that this set of homomorphisms is nonempty. Since the algebra of subsets of $X\times Y$ generated
by products of finite partitions is dense in the $\sigma$-algebra with respect to the pseudometric
$d(A,B) = (\mu\times\nu )(A\Delta B)$, it follows by a simple approximation argument that for every finite partition
$\sE$ of $X\times Y$, finite set $e_{G\times H} \in K\subseteq G\times H$, and $\delta > 0$
one has $\Hom_{\mu\times\nu} (\sE , K,\delta ,\pi\times\sigma )\neq\emptyset$
for all good enough sofic approximations $\pi : G\to\Sym (V)$ and $\sigma : H\to\Sym (W)$.
Since the action $G\times H\curvearrowright (X \times Y ,\mu \times\nu)$ has property
sofic SC, it follows that $G\times H$ has property $\sS$-SC.
\end{proof}

\subsection{Property sofic SC under continuous orbit equivalence}\label{SS-invariant cts}

\begin{proposition} \label{P-sofic SC under COE}
Let $G\curvearrowright X$ and $H\curvearrowright Y$ be topologically free continuous actions on compact metrizable spaces which are continuously orbit equivalent. Suppose that $G\curvearrowright X$ has property sofic SC. Then $H\curvearrowright Y$ has property sofic SC.
\end{proposition}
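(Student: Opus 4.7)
The strategy is to transfer property sofic SC from $G\curvearrowright X$ to $H\curvearrowright Y$ by converting sofic approximations and microstates for $H\curvearrowright Y$ into sofic approximations and microstates for $G\curvearrowright X$ through the continuous cocycles $\kappa : G\times X \to H$ and $\lambda : H\times Y \to G$ associated with the continuous orbit equivalence $\Phi : X\to Y$, then translating the resulting combinatorial data back across $\Phi$.

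I would first record the basic translation between $G$-side and $H$-side finite sets. Since $H$ is discrete and $X$ compact, the continuous map $x\mapsto\kappa(g,x)$ is locally constant with finite range $\kappa(g,X)$, which gives a finite clopen partition $\{X_{g,h}\}_{h\in\kappa(g,X)}$ of $X$, and symmetrically for $\lambda$. For $F\in\overline{\cF}(G)$ and $E\in\overline{\cF}(H)$ this provides $\kappa(F,X)\cup\{e_H\}\in\overline{\cF}(H)$ and $\lambda(E,Y)\cup\{e_G\}\in\overline{\cF}(G)$. Given $\Upsilon' : \cF(H)\to [0,\infty)$ for which property sofic SC of $H\curvearrowright Y$ is to be checked, I would set $\Upsilon(F)=\Upsilon'(\kappa(F,X)\cup\{e_H\})$, apply property sofic SC for $G\curvearrowright X$ to obtain $S\in\overline{\cF}(G)$, and set $S'=\kappa(S,X)\cup\{e_H\}\in\overline{\cF}(H)$. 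Given $T'\in\overline{\cF}(H)$, I set $T=\lambda(T',Y)\cup\{e_G\}\in\overline{\cF}(G)$, invoke property sofic SC to obtain $C,n,S_1,\ldots,S_n,F^\sharp,\delta^\sharp$, and take $S_j'=\kappa(S_j,X)\cup\{e_H\}$.

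The core construction is as follows: given a sofic approximation $\sigma : H\to\Sym(V)$ and a microstate $\psi : V\to Y$ for $H\curvearrowright Y$, put $\varphi =\Phi^{-1}\circ\psi : V\to X$ and define $\pi : G\to\Sym(V)$ by $\pi_g(v)=\sigma_h(v)$ when $\varphi(v)\in X_{g,h}$. Using uniform continuity of $\Phi^{-1}$ together with local constancy of $\kappa$ on $F^\sharp\times X$, one can choose $(F^\sharp)'\in\cF(H)$ and $(\delta^\sharp)'>0$ so that $\psi\in\Map_{d'}((F^\sharp)',(\delta^\sharp)',\sigma)$ forces $\varphi\in\Map_d(F^\sharp,\delta^\sharp,\pi)$. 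The cocycle identity $\kappa(g^{-1},gx)=\kappa(g,x)^{-1}$, approximate equivariance of $\varphi$, and approximate multiplicativity of $\sigma$ together imply that $\pi_g$ is injective off an exceptional set $B\subseteq V$ whose $\m$-measure is arbitrarily small for good enough $\sigma$, so $\pi_g$ can be corrected on $B$ to a genuine permutation while $\pi$ remains a good enough sofic approximation for $G$.

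Applying property sofic SC for $G\curvearrowright X$ to $(\pi,\varphi)$ produces sets $W$ and $\cV_1,\ldots,\cV_n$ in $V$, which I would transport essentially unchanged to the $H$-side: condition~(i) of Definition~\ref{D-CSC 1} is immediate since $\Upsilon'(S_j')=\Upsilon(S_j)$; in (ii), each $G$-edge $(v,\pi_g v)$ with $g\in S$ equals, off $B$, the $H$-edge $(v,\sigma_{\kappa(g,\varphi(v))}v)$ with $\kappa(g,\varphi(v))\in S'$; in (iii), a relation $\sigma_h w_1=w_2$ with $h\in T'$ translates, off $B$, into $\pi_g w_1=w_2$ with $g=\lambda(h,\psi(w_1))\in T$, and each $S_j$-edge of the resulting $G$-path of length at most $C$ becomes an $S_j'$-edge with the same endpoints. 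The main obstacle is the bookkeeping required to absorb the bad set $B$ within the budget~(i): following the template of Proposition~\ref{P-sofic SC for fg}, I would append one extra index $n+1$ with $S_{n+1}'=T'$ and $\cV_{n+1}\subseteq V$ covering the $T'$-neighbourhood of $B$ together with the ``uncovered'' portion $V\setminus\bigcup_{h\in S'}\sigma_hW$, demanding that $\sigma$ be good enough that $\m(B)$ is small compared with $(\Upsilon'(T')|T'|)^{-1}$.
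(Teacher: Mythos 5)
Your proposal matches the paper's argument in all essentials: given a sofic approximation $\sigma$ for $H$ and a microstate landing in a suitable dense $G_\delta$, build a sofic approximation $\pi$ for $G$ via the cocycle $\pi_g v = \sigma_{\kappa(g,\varphi(v))}v$ (correcting on a small bad set to obtain a genuine permutation and showing the same microstate is good for $\pi$), apply property sofic SC for $G\curvearrowright X$ to obtain $W$ and $\cV_1,\dots,\cV_n$, and transport them back to the $H$-side by appending one extra index with $S_{n+1}'=T'$ to absorb the bad set and the uncovered portion. The only bookkeeping adjustment you will need when fleshing this out is to define $\Upsilon(F)=2\Upsilon'(\kappa(F,X))$ rather than $\Upsilon(F)=\Upsilon'(\kappa(F,X)\cup\{e_H\})$, so that the original $n$ terms of budget (i) contribute at most $1/2$ and there is room left for the appended $(n+1)$-st term---exactly the factor of $2$ the paper uses.
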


To prove this proposition
we may assume that $X=Y$ and that the identity map of $X$ provides a continuous orbit equivalence between the actions $G\curvearrowright X$ and $H\curvearrowright X$. Let $\kappa : G\times X\rightarrow H$ and
$\lambda : H\times X \rightarrow G$ be the associated cocycles.

The actions of $G$ and $H$ generate an action $G*H \curvearrowright X$ of their free product
via the canonical embeddings of $G$ and $H$ into $G*H$. Since the actions of $G$ and $H$ are topologically
free, we can find a $G$-invariant dense $G_\delta$ set $W_1 \subseteq X$ on which $G$ acts freely
and an $H$-invariant dense $G_\delta$ set $W_2 \subseteq X$ on which $H$ acts freely.
Set $X_0 = \bigcap_{s\in G*H} s(W_1 \cap W_2 )$. Then $X_0$ is a $G*H$-invariant
dense $G_\delta$ subset of $X$ on which both $G$ and $H$ act freely.

Fix a compatible metric $d$ on $X$ which gives $X$ diameter no bigger than $1$.
For each $g\in G$ there is an $\eta_g>0$ such that for any $x, y\in X$ with $d(x, y)\le \eta_g$ one has
$\kappa(g, x)=\kappa(g, y)$, and likewise
for each $s\in H$ there is an $\eta_s>0$ such that for any $x, y\in X$ with $d(x, y)\le \eta_s$
one has $\lambda(s, x)=\lambda(s, y)$. We put $\eta_F=\min_{g\in F}\eta_g>0$ for a nonempty finite set $F\subseteq G$,
and $\eta_L=\min_{s\in L}\eta_s>0$ for a nonempty finite set $L\subseteq H$.

We will need the following lemma, which will also be of use in the proof of Theorem~\ref{T-continuous OE}.

\begin{lemma} \label{L-approximation for group}
Let $L\in \overline{\cF}(H)$, and $0<\tau<1$. Set $F=\lambda(L^2, X)\in \overline{\cF}(G)$ and
\[
\tau'=\min\big\{\eta_{L^2}\tau^{1/2}/(8|F|)^{1/2}, \tau/(22|F|^2)\big\}>0 .
\]
Let $\pi: G\rightarrow \Sym(V)$ be an $(F, \tau')$-approximation for $G$. Let $\varphi\in \Map_d(F, \tau', \pi)$ be such that $\varphi(V)\subseteq X_0$. Define $\sigma': H\rightarrow V^V$ by
\[
\sigma'_tv=\pi_{\lambda(t, \varphi(v))}v
\]
for $t\in H$ and $v\in V$. Then there is an $(L, \tau)$-approximation $\sigma: H\rightarrow \Sym(V)$ for $H$ such that
$\rho_{\Hamm}(\sigma_t, \sigma'_t)\le \tau$ for all $t\in L^2$.
\end{lemma}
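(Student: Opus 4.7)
The plan is to show that the candidate map $\sigma'$ is already approximately multiplicative and approximately separating on $L$, and then to correct each $\sigma'_t$ with $t\in L^2$ on a small set to produce a genuine permutation $\sigma_t$ of $V$ satisfying all the stated estimates.

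For the approximate multiplicativity, fix $s,t\in L$ and unpack
\[
\sigma'_t\sigma'_s v \;=\; \pi_{\lambda(t,\varphi(\pi_{\lambda(s,\varphi(v))}v))}\,\pi_{\lambda(s,\varphi(v))}\,v.
\]
Because $\varphi\in\Map_d(F,\tau',\pi)$ and $\lambda(s,\varphi(v))\in F$, for all but a $\m$-small set of $v\in V$ the point $\varphi(\pi_{\lambda(s,\varphi(v))}v)$ lies within $d$-distance $\eta_{L^2}$ of $\lambda(s,\varphi(v))\varphi(v)=s\varphi(v)$; the continuity property of $\lambda$ on $L^2$ then forces the inner cocycle value to equal $\lambda(t,s\varphi(v))$. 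Applying the cocycle identity $\lambda(t,s\varphi(v))\lambda(s,\varphi(v))=\lambda(ts,\varphi(v))$ together with the approximate multiplicativity of $\pi$ on $F$ collapses the right-hand side to $\pi_{\lambda(ts,\varphi(v))}v=\sigma'_{ts}v$ outside another small set. Taking $s=t^{-1}$ shows that $\sigma'_{t^{-1}}\sigma'_t$ agrees with $\id_V$ on most of $V$, hence each $\sigma'_t$ is close to being a bijection.

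For separation, for distinct $s,t\in L$ and any $v\in V$ the points $s\varphi(v)\ne t\varphi(v)$ in $X_0\supseteq\varphi(V)$ force $\lambda(s,\varphi(v))\ne\lambda(t,\varphi(v))$ in $F$, because $G$ acts freely on $X_0$. Partitioning $V$ according to the at most $|F|^2$ possible values of the pair $(\lambda(s,\varphi(v)),\lambda(t,\varphi(v)))$ and invoking the separation clause $\rho_\Hamm(\pi_a,\pi_b)\ge 1-\tau'$ for distinct $a,b\in F$ on each piece yields $|\{v:\sigma'_s v=\sigma'_t v\}|\le|F|^2\tau'|V|$, which is safely small by the choice of $\tau'$.

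To finish, for each $t\in L^2$ take a maximal subset of $V$ on which $\sigma'_t$ is injective---its complement has $\m$-measure at most a constant multiple of $\tau'$ by the near-bijectivity established in the first step---let $\sigma_t$ agree with $\sigma'_t$ there, and extend it arbitrarily to a bijection between the remaining points of $V$ and the complement of the image. The resulting $\sigma_t$ is a permutation with $\rho_\Hamm(\sigma_t,\sigma'_t)$ controlled on the same $\tau'$-scale, and combining this with the two estimates above via the triangle inequality gives the $(L,\tau)$-multiplicativity and $(L,\tau)$-separation clauses simultaneously. The principal obstacle is the bookkeeping: three independent sources of error (the nonequivariance of $\varphi$ measured in $d_2$, the nonmultiplicativity of $\pi$ on $F$, and the injectivity surgery) must all be absorbed into the single parameter $\tau$, and the explicit choice $\tau'=\min\{\eta_{L^2}\tau^{1/2}/(8|F|)^{1/2},\ \tau/(22|F|^2)\}$ in the hypothesis is designed precisely so that a Chebyshev-type conversion of the $d_2$-error into a Hamming-type error (the first term) and the separation estimate above (the second term) both yield quantities small compared to $\tau$.
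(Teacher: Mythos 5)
Your overall strategy coincides with the paper's: show $\sigma'$ is approximately multiplicative and approximately separating, then surgically correct each $\sigma'_t$ to a genuine permutation $\sigma_t$ and assemble the estimates by the triangle inequality. The separation step (partitioning $V$ by the pair $(\lambda(s,\varphi(v)),\lambda(t,\varphi(v)))$ and invoking the separation clause of $\pi$ on each piece) is a mild repackaging of the paper's argument via the set $V_F$, and it works.

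Two imprecisions are worth fixing, one of which is substantive. First, you carry out the approximate multiplicativity computation for $s,t\in L$, but you must do it for $s,t\in L^2$. The reason is that you need $\rho_{\Hamm}(\sigma_t,\sigma'_t)\le\tau$ for all $t\in L^2$ (this is part of the conclusion, and you also need it for $st\in L^2$ when you verify the multiplicativity clause for $\sigma$ on $L$), and the only route to that bound is via the near-bijectivity $\sigma'_{t^{-1}}\sigma'_t\approx\id$, which requires the multiplicativity identity for the pair $(t^{-1},t)$ with $t$ ranging over $L^2$. This is exactly why the lemma's hypotheses involve $F=\lambda(L^2,X)$ and $\eta_{L^2}$; your computation as written would only yield the correction bound on $L$ and hence would not close. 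The fix is cosmetic (replace $L$ by $L^2$ throughout), since the same estimates go through. Second, your claim that the injectivity surgery removes a set of $\m$-measure ``a constant multiple of $\tau'$'' is not right: the dominant contribution to $\rho_{\Hamm}(\sigma'_{t^{-1}}\sigma'_t,\id)$ is the multiplicativity error of $\sigma'$, which is of order $\tau$ (concretely about $19\tau/88$), with only a small $O(\tau')$ correction coming from $\rho_\Hamm(\pi_{e_G},\id)$. The final conclusion still holds, but only because the constants in $\tau'$ were tuned so that $3\cdot(\text{correction error})+(\text{multiplicativity error})\le\tau$; this is a genuinely tight budget ($3\cdot 23/88+19/88=1$), so you should track the sizes accurately rather than treating the surgery cost as negligible.
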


\begin{proof}
Denote by $V_F$ the set of all $v\in V$ satisfying $\pi_g\pi_hv=\pi_{gh}v$ for all $g, h\in F$ and $\pi_gv\neq \pi_hv$ for all distinct $g, h\in F$.
Then
\[
\m (V\setminus V_F)\le 2|F|^2\tau'\le \frac{\tau}{11}.
\]
Denote by $V_\varphi$ the set of all $v\in V$ satisfying
$d(\varphi(\pi_gv), g\varphi(v))\le \eta_{L^2}$ for all $g\in F$.
Then
\[
\m (V\setminus V_\varphi )\le |F|\bigg(\frac{\tau'}{\eta_{L^2}}\bigg)^2\le \frac{\tau}{8}.
\]

For all $s, t\in L^2$ and $v\in V_F\cap V_\varphi$,
since $\lambda(t, \varphi(v))\in F$ we have
\[
d(\varphi(\pi_{\lambda(t, \varphi(v))}v), \lambda(t, \varphi(v))\varphi(v))\le \eta_{L^2}
\]
and hence
\[
\lambda(s, \varphi(\pi_{\lambda(t, \varphi(v))}v))=\lambda(s, \lambda(t, \varphi(v))\varphi(v))=\lambda(s, t\varphi(v)),
\]
which yields
\begin{align*}
\sigma'_s\sigma'_tv=\pi_{\lambda(s, \varphi(\sigma'_tv))}\sigma'_tv
&=\pi_{\lambda(s, \varphi(\pi_{\lambda(t, \varphi(v))}v))}\pi_{\lambda(t, \varphi(v))}v\\
&=\pi_{\lambda(s, t\varphi(v))}\pi_{\lambda(t, \varphi(v))}v\\
&=\pi_{\lambda(s, t\varphi(v))\lambda(t, \varphi(v))}v\\
&=\pi_{\lambda(st, \varphi(v))}v\\
&=\sigma'_{st}v ,
\end{align*}
so that
\begin{align} \label{E-group}
\rho_{\Hamm}(\sigma'_s\sigma'_t, \sigma'_{st})\le
\m (V\setminus (V_F\cap V_\varphi))\le \frac{\tau}{11}+\frac{\tau}{8}=\frac{19\tau}{88}.
\end{align}

Note that $\sigma'_{e_H}=\pi_{e_G}$.
For each $t\in H$ choose a $\sigma_t\in \Sym(V)$ such that $\sigma_tv=\sigma'_tv$ for all $v\in V$ satisfying $\sigma'_{t^{-1}}\sigma'_tv=v$.
For each $t\in L^2$, taking $s=t^{-1}$ in \eqref{E-group} we conclude that
\begin{align*}
\rho_{\Hamm}(\sigma_t, \sigma'_t)&\le \rho_{\Hamm}(\sigma'_{t^{-1}}\sigma'_t, \id)\\
&\le \rho_{\Hamm}(\sigma'_{t^{-1}}\sigma'_t, \sigma'_{e_H})+\rho_{\Hamm}(\sigma'_{e_H}, \id)\\
&\le \frac{19\tau}{88}+\rho_{\Hamm}(\pi_{e_G}, \id)\\
&\le \frac{19\tau}{88}+\tau'\\
&\le \frac{19\tau}{88}+\frac{\tau}{22}=\frac{23\tau}{88},
\end{align*}
which in particular shows that
$\rho_{\Hamm}(\sigma_t, \sigma'_t)<\tau$.
For all $s,t \in L$ we then have
\begin{align*}
\rho_{\Hamm}(\sigma_s\sigma_t, \sigma_{st})&\le \rho_{\Hamm}(\sigma_s, \sigma'_s)+\rho_{\Hamm}(\sigma_t, \sigma'_t)+\rho_{\Hamm}(\sigma'_s\sigma'_t, \sigma'_{st})+\rho_{\Hamm}(\sigma_{st}, \sigma'_{st})\\
&\le \frac{23\tau}{88} + \frac{23\tau}{88}+\frac{19\tau}{88}+\frac{23\tau}{88}=\tau.
\end{align*}
For all distinct $s,t\in L^2$, since $\varphi(V)\subseteq X_0$ we have $\sigma'_sv\neq \sigma'_tv$ for all $v\in V_F$ and hence
\begin{align*}
\rho_{\Hamm}(\sigma_s, \sigma_t)&\ge \rho_{\Hamm}(\sigma'_s, \sigma'_t)-\rho_{\Hamm}(\sigma_s, \sigma'_s)-\rho_{\Hamm}(\sigma_t, \sigma'_t)\\
&\ge \m (V_F)-\frac{23\tau}{88}-\frac{23\tau}{88}\\
&\ge 1-\frac{\tau}{11}-\frac{23\tau}{44}> 1-\tau. \qedhere
\end{align*}
\end{proof}

\begin{proof}[Proof of Proposition~\ref{P-sofic SC under COE}]
Let $\Upsilon_H$ be a function $\cF(H)\rightarrow [0, \infty)$.
Define the function $\Upsilon_G: \cF(G)\rightarrow [0, \infty)$ by $\Upsilon_G(F)=2\Upsilon_H(\kappa(F, X))$.

Since the action $G\curvearrowright X$ has property sofic SC, there exists an $S_G\in \overline{\cF}(G)$  such that for any $T_G\in \overline{\cF}(G)$ there are $C_G, n_G\in \Nb$,  $S_{G, 1}, \dots, S_{G, n_G}\in \overline{\cF}(G)$, $L_G\in \overline{\cF}(G)$, and $0<\tau_G<1$  such that, for any $(L_G, \tau_G)$-approximation $\pi: G\rightarrow \Sym(V)$ for $G$ with $\Map_d(L_G, \tau_G, \pi)\neq \emptyset$, there are subsets $W_G$ and $\cV_{G, j}$ of $V$ for $1\le j\le n_G$  satisfying the following conditions:
\begin{enumerate}
\item $\sum_{j=1}^{n_G} \Upsilon_G(S_{G, j})\m(\cV_{G, j})\le 1$,
\item $\bigcup_{g\in S_G}\pi_gW_G=V$,
\item if $w_1, w_2\in W_G$ satisfy $\pi_gw_1=w_2$ for some $g\in T_G$ then $w_1$ and $w_2$ are connected by a path of length at most $C_G$ in which each edge is an $S_{G, j}$-edge with both endpoints in $\cV_{G, j}$ for some $1\le j\le n_G$.
\end{enumerate}

Set $S_H=\kappa(S_G, X)\in \overline{\cF}(H)$.

Let $T_H\in \overline{\cF}(H)$. Set $T_G=\lambda(T_H, X)\in \overline{\cF}(G)$.
Then we have $C_G$, $n_G$, $S_{G, j}$ for $1\le j\le n_G$, $L_G$, and $\tau_G$ as above. Set $C_H=C_G$, $n_H=n_G+1$, $S_{H, j}=\kappa(S_{G, j}, X)\in \overline{\cF}(H)$ for $1\le j\le n_G$, and $S_{H, n_H}=T_H\in \overline{\cF}(H)$. Take $0<\delta_H<1/(6\Upsilon_H(T_H)|T_H|)$. Also, set $A=L_G\cup S_G\cup T_G\cup \bigcup_{j=1}^{n_G}S_{G, j}\in \cF(G)$, $L_H=\kappa(A^{2(100+C_G)}, X)\in \cF(H)$, and
\begin{align*}
\tilde{\tau}_G &= \min\big\{(\tau_G/2)^2, \delta_H/(4|S_G|\cdot|A|^{2(100+C_G)})\big\}>0, \\
\tau_H &= \min\big\{\eta_{A^{2(100+C_G)}}\tilde{\tau}_G^{1/2}/(8|L_H|)^{1/2}, \tilde{\tau}_G/(22|L_H|^2), \tau_G/(2|L_H|^{1/2})\big\}>0.
\end{align*}

Let $\sigma: H\rightarrow \Sym(V)$ be an $(L_H, \tau_H)$-approximation for $H$ with $\Map_d(L_H, \tau_H/2, \sigma)\neq \emptyset$. Choose a $\varphi\in \Map_d(L_H, \tau_H/2, \sigma)$. Since $X_0$ is dense in $X$,
by perturbing $\varphi$ if necessary we may assume that $\varphi\in \Map_d(L_H, \tau_H, \sigma)$ and $\varphi(V)\subseteq X_0$. Define $\pi': G\rightarrow V^V$ by
\[
\pi'_gv=\sigma_{\kappa(g, \varphi(v))}v
\]
for all $v\in V$ and $g\in G$. By Lemma~\ref{L-approximation for group} there is an $(A^{100+C_G}, \tilde{\tau}_G)$-approximation $\pi: G\rightarrow \Sym(V)$ such that $\rho_{\Hamm}(\pi_g, \pi'_g)\le \tilde{\tau}_G$ for all $g\in A^{100+C_G}$. For each $g\in L_G\subseteq A^{100+C_G}$ we have
\begin{align*}
d_2(g\varphi, \varphi\pi_g)&\le d_2(g\varphi, \varphi\pi'_g)+d_2(\varphi\pi'_g, \varphi\pi_g)\\
&\le \bigg(\frac{1}{|V|}\sum_{v\in V}d(\kappa(g, \varphi(v))\varphi(v), \varphi(\sigma_{\kappa(g, \varphi(v))}v))^2\bigg)^{1/2}+\tilde{\tau}_G^{1/2}\\
&\le \bigg(\frac{1}{|V|}\sum_{v\in V}\sum_{t\in \kappa(g, X)}d(t\varphi(v), \varphi(\sigma_tv))^2\bigg)^{1/2}+\frac{\tau_G}{2}\\
&\le (\tau_H^2|L_H|)^{1/2}+\frac{\tau_G}{2} \\
&\le \tau_G.
\end{align*}
Thus $\varphi\in \Map_d(L_G, \tau_G, \pi)$. Then we have $W_G$ and $\cV_{G, j}$ for $1\le j\le n_G$ as above.

We now verify conditions (i)-(iii) in Definition~\ref{D-CSC 1} as referenced in Definition~\ref{D-CSC 2}.
Denote by $V_1$ the set of all $v\in V$ satisfying $\pi_{g_1g_2}v=\pi_{g_1}\pi_{g_2}v$ for all $g_1, g_2\in A^{100+C_G}$. Then $\m(V\setminus V_1)\le |A|^{2(100+C_G)}\tilde{\tau}_G$.
Also, denote by $V_2$ the set of $v\in V$ satisfying $\pi_g\pi_{g_1}v=\pi'_g\pi_{g_1}v$  for all $g\in A$ and $g_1\in A^{C_G}$. Then $\m(V\setminus V_2)\le \tilde{\tau}_G|A|^{1+C_G}$.
Set $W_H'=W_G\cap V_1\cap V_2$, and $W_H=W_H'\cup \sigma_{e_H}^{-1}(V\setminus \bigcup_{h\in S_H}\sigma_hW_H')$. Then $\bigcup_{h\in S_H}\sigma_hW_H=V$, verifying condition (ii) in Definition~\ref{D-CSC 1}.

Note that
\begin{align*}
\m(W_H\setminus W_H')&\le 1-\m\big(\bigcup_{h\in S_H}\sigma_hW_H'\big)\\
&\le 1-\m\big(\bigcup_{g\in S_G}\pi'_g W_H'\big)\\
&=1-\m\big(\bigcup_{g\in S_G}\pi_g W_H'\big)\\
&\le |S_G|(\m(V\setminus V_1)+\m(V\setminus V_2))\\
&\le |S_G|(\tilde{\tau}_G|A|^{2(100+C_G)}+ \tilde{\tau}_G|A|^{1+C_G})\\
&\le \delta_H.
\end{align*}
Put $\cV_{H, j}=\cV_{G, j}$ for all $1\le j\le n_G$, and
\[
\cV_{H, n_H}=\bigcup_{h\in T_H}((W_H\setminus W_H')\cup \sigma_h(W_H\setminus W_H')\cup \sigma_h^{-1}(W_H\setminus W_H')).
\]
Then
\[
\m (\cV_{H, n_H})\le (2|T_H|+1)\m (W_H\setminus W_H')\le 3|T_H|\delta_H,
\]
and hence
\begin{align*}
\sum_{j=1}^{n_H} \Upsilon_H(S_{H, j})\m(\cV_{H, j})&=\Upsilon_H(T_H)\m(\cV_{H, n_H})+\sum_{j=1}^{n_G} \Upsilon_H(\kappa(S_{G, j}, X))\m(\cV_{G, j})\\
&=\Upsilon_H(T_H)\m(\cV_{H, n_H})+\frac{1}{2}\sum_{j=1}^{n_G} \Upsilon_G(S_{G, j})\m(\cV_{G, j})\\
&\le 3\Upsilon_H(T_H)|T_H|\delta_H+\frac12 \le 1,
\end{align*}
verifying condition (i) in Definition~\ref{D-CSC 1}.
Let $h\in T_H$ and $w_1, w_2\in W_H$ with $\sigma_hw_1=w_2$.  If $w_1\not\in W_H'$ or $w_2\not\in W_H'$, then $(w_1, w_2)$ is an $S_{H, n_H}$-edge with both endpoints in $\cV_{H, n_H}$. We may thus assume that $w_1, w_2\in W_H'$. Then
\[
w_2=\sigma_hw_1=\pi'_{\lambda(h, \varphi(w_1))}w_1=\pi_{\lambda(h, \varphi(w_1))}w_1\in \pi_{T_G}w_1
\]
and so $w_1$ and $w_2$ are connected by a path of length at most $C_G$ in which each edge is an $S_{G, j}$-edge with both
endpoints in $\cV_{G, j}$ for some $1\le j\le n_G$.
It is easily checked that such an edge is also an $S_{H, j}$-edge. This verifies condition (iii) in Definition~\ref{D-CSC 1}.
\end{proof}

\subsection{Property sofic SC under bounded orbit equivalence}\label{SS-invariant bounded}

\begin{proposition} \label{P-sofic SC under BOE}
Let $G\curvearrowright (X, \mu)$ and $H\curvearrowright (Y, \nu)$ be free p.m.p.\ actions which are boundedly orbit equivalent. Suppose that $G\curvearrowright (X, \mu)$ has property sofic SC. Then so does $H\curvearrowright (Y, \nu)$.
\end{proposition}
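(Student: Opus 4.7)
The plan is to mirror the proof of Proposition~\ref{P-sofic SC under COE}, substituting the continuity of the cocycles with their finite range. After restricting to full-measure subsets we may assume $\Psi : X\to Y$ is a genuine measure isomorphism and that both cocycles $\kappa$ and $\lambda$ are bounded. Set $\Upsilon_G(F) = 2\Upsilon_H(\kappa(F,X))$, which is well-defined since $\kappa(F,X)$ is finite, and apply property sofic SC of $G\curvearrowright (X,\mu)$ to produce $S_G\in\overline{\cF}(G)$; take $S_H = \kappa(S_G, X)\in\overline{\cF}(H)$. Given $T_H\in\overline{\cF}(H)$, set $T_G = \lambda(T_H, Y)\in\overline{\cF}(G)$ and extract $C_G, n_G, S_{G,j}, \sC^\sharp, F^\sharp, \delta^\sharp$ from property sofic SC applied to $T_G$. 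The output parameters for $H$ will be $C_H = C_G$, $n_H = n_G + 1$, $S_{H, j} = \kappa(S_{G, j}, X)$ for $1\le j \le n_G$, and $S_{H, n_H} = T_H$. For the witness triple $(\sD^\sharp, L^\sharp, \delta^\sharp_H)$ in the definition of property sofic SC for $H\curvearrowright (Y,\nu)$, let $\sC^\star$ be a finite partition of $X$ refining $\sC^\sharp$ together with the cocycle partition $\{x : \kappa(g,x)=t\}_{t\in H}$ for each $g$ in a sufficiently large finite subset of $G$; set $\sD^\sharp = \Psi(\sC^\star)$, take $L^\sharp\subseteq H$ correspondingly large, and take $\delta^\sharp_H$ sufficiently small.

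Given any sofic approximation $\sigma : H\to\Sym(V)$ and $\varphi\in\Hom_\nu(\sD^\sharp, L^\sharp, \delta^\sharp_H, \sigma)$, transport $\varphi$ to a homomorphism $\varphi'$ on $\alg((\sC^\star)_{F^\sharp})$ via $\varphi'(C) = \varphi(\Psi(C))$ and $\varphi'(gC) = \varphi(\kappa(g, C)\Psi(C))$; this is well-defined because $\kappa(g,\cdot)$ is constant on each atom of $\sC^\star$. Define $\pi'_g : V\to V$ by $\pi'_g v = \sigma_{\kappa(g, C_v)} v$, where $C_v\in\sC^\star$ is the unique atom with $v\in\varphi'(C_v)$. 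The main technical step is a measure-theoretic analog of Lemma~\ref{L-approximation for group}: show that $\pi'$ is approximately multiplicative and approximately injective in Hamming distance on a prescribed finite subset of $G$, then perturb it to a genuine sofic approximation $\pi : G\to\Sym(V)$ with $\rho_\Hamm(\pi_g, \pi'_g)$ small for the relevant $g$. Multiplicativity will come from the cocycle identity $\kappa(gh,x) = \kappa(g,hx)\kappa(h,x)$ applied at the atom level of $\sC^\star$, combined with the $H$-equivariance of $\varphi$ up to small symmetric difference; approximate injectivity will come from freeness of both actions, since for distinct $g, h\in G$ one has $\kappa(g,x)\neq\kappa(h,x)$ almost everywhere and $\sigma$ approximately separates these cocycle values.

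With $\pi$ in hand, one verifies that $\varphi'\in\Hom_\mu(\sC^\sharp, F^\sharp, \delta^\sharp, \pi)$: the measure-matching condition is immediate from $\mu(C) = \nu(\Psi(C))$, and the equivariance condition follows from the chain $\pi_g\varphi'(C)\approx\sigma_{\kappa(g,C)}\varphi(\Psi(C))\approx\varphi(\kappa(g,C)\Psi(C))=\varphi'(gC)$. Then apply property sofic SC of $G\curvearrowright (X,\mu)$ to $\pi$ to obtain $W$ and $\cV_{G,j}$ satisfying (i)--(iii) of Definition~\ref{D-CSC 1}. Set $\cV_{H,j} = \cV_{G,j}$ for $1\le j\le n_G$, and define $W_H$ and $\cV_{H,n_H}$ to absorb both the discrepancy between $\pi$ and $\pi'$ and the usual correction $\sigma_{e_H}^{-1}(V\setminus\bigcup_{h\in S_H}\sigma_h W)$; verification of (i)--(iii) for the $H$-action proceeds just as in the continuous case, using that $\sigma_h w = \pi'_{\lambda(h, C_w)} w\approx\pi_{\lambda(h, C_w)} w$ on most vertices. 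The main obstacle will be the measure-theoretic analog of Lemma~\ref{L-approximation for group}: because $\varphi$ is an algebra homomorphism rather than a point map, cocycle identities must be verified at the level of a fine enough refining partition, and the multiplicativity defect must be balanced against the $L^1$-errors in the definition of $\Hom_\nu$. Boundedness of $\kappa$ and $\lambda$ is crucial, ensuring that only finitely many cocycle partitions need to be absorbed into $\sC^\star$.
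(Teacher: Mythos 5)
Your proposal follows essentially the same route as the paper's proof: after reducing to $(X,\mu)=(Y,\nu)$, the paper converts the given sofic approximation $\sigma$ for $H$ into one for $G$ via the cocycle $\kappa$ and the partitions $\sP_g$ (Lemma~\ref{L-Shannon approximation for group measure}, the measure-theoretic analogue of Lemma~\ref{L-approximation for group} that you correctly identify as the crux; the paper does not reprove it but cites it as a specialization of Lemma~4.2 of \cite{KerLi19}), then checks that the same $\varphi$ lies in $\Hom_\mu(\sC_G,L_G,\tau_G,\pi)$, applies property sofic SC for $\pi$, and absorbs the discrepancy sets into an extra $\cV_{H,n_H}$ exactly as you describe. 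The only cosmetic difference is that the paper avoids your explicit transport $\varphi'$ by taking $\sC_H$ to refine $(\sC_G)_{L_G}$ together with the cocycle partitions ${}_{U^{2(100+C_G)}}\sP\vee{}_{T_H}\sP$, so the given $\varphi$ restricts directly to the required algebra.
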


To prove this proposition we may assume that $(X, \mu)=(Y, \nu)$, the actions $G\curvearrowright X$ and $H\curvearrowright X$ are free, and the identity map of $X$ provides a bounded orbit equivalence between the actions $G\curvearrowright X$ and $H\curvearrowright X$. Let $\kappa : G\times X\to H$ and $\lambda : H\times X\to G$ be the associated cocyles.

For each $g\in G$ denote by $\sP_g$ the finite Borel partition of $X$ consisting of the sets
$X_{g,t} := \{ x\in X : gx = tx \}$ for $t\in H$, and likewise for $t\in H$ denote by
$\sP_t$ the finite Borel partition of $X$ consisting of the sets $X_{g,t}$ for $g\in G$.
For every $F$ in $\cF(G)$ or $\cF(H)$, write ${}_F\sP=\bigvee_{g\in F}\sP_g$.

The following is a specialization of Lemma~4.2 in \cite{KerLi19} to the case of
bounded orbit equivalence, which permits a simplification of the statement.

\begin{lemma} \label{L-Shannon approximation for group measure}
Let $F\in \overline{\cF}(G)$ and set $L = \kappa (F^2 ,X) \in \overline{\cF}(H)$. Let $0<\tau<1$
and $0<\tau'\le \tau/(60|L|^2)$.
Let $\sigma : H\rightarrow \Sym(V)$ be an $(L, \tau')$-approximation for $H$. Let $\varphi\in \Hom_\mu({}_{F^2}\sP, L, \tau', \sigma)$. Let $\pi': F^2\rightarrow V^V$ be such that
\[
\pi'_gv=\sigma_{\kappa(g, A)}v
\]
for all $g\in F^2$, $A\in {}_{F^2} \sP$ and $v\in \varphi(A)$. Then there is an $(F, \tau)$-approximation $\pi : G\rightarrow \Sym(V)$ for $G$ such that
$\rho_{\Hamm}(\pi_g, \pi'_g)\le \tau/5$ for all $g\in F^2$.
\end{lemma}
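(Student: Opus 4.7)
The plan is to follow the template of Lemma~\ref{L-approximation for group} but in the measure-theoretic setting, exploiting the bounded orbit equivalence hypothesis to ensure that $L$ is finite and that $\kappa(g,\cdot)$ is constant on each block of ${}_{F^2}\sP$ for every $g\in F^2$; denote this common value by $\kappa(g,A)$. Since $gx=\kappa(g,x)x$, the set $gA$ coincides with $\kappa(g,A)\cdot A$, and in particular lies in the algebra $\alg(({}_{F^2}\sP)_L)$ on which $\varphi$ is defined.

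First I would isolate a ``good'' subset $V_0\subseteq V$ consisting of those $v\in V$ for which: (a) $\sigma_s\sigma_t v=\sigma_{st}v$ for all $s,t\in L$; (b) $\sigma_s v\neq\sigma_t v$ for distinct $s,t\in L$; and (c) for every $A\in{}_{F^2}\sP$ and $g\in F^2$ with $v\in\varphi(A)$, one has $\sigma_{\kappa(g,A)}v\in\varphi(gA)$. Properties (a) and (b) fail on a set of $\m$-size at most $2|L|^2\tau'$ since $\sigma$ is an $(L,\tau')$-approximation, and since the $\Hom_\mu$ condition gives $\sum_A \m(\sigma_s\varphi(A)\Delta\varphi(sA))<\tau'$ for each $s\in L$ and $\sigma_s$ is a genuine permutation, property (c) fails on a set of $\m$-size at most $|L|\tau'$. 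Hence $\m(V\setminus V_0)\le 3|L|^2\tau'\le\tau/20$.

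On $V_0$ the cocycle identity $\kappa(gh,A)=\kappa(g,hA)\kappa(h,A)$ combined with (a) and (c) gives, for $g,h,gh\in F^2$ and $v\in V_0\cap\varphi(A)$,
\[
\pi'_g\pi'_h v=\sigma_{\kappa(g,hA)}\sigma_{\kappa(h,A)}v=\sigma_{\kappa(gh,A)}v=\pi'_{gh}v,
\]
so $\rho_{\Hamm}(\pi'_g\pi'_h,\pi'_{gh})\le\m(V\setminus V_0)$. For distinct $g_1,g_2\in F^2$, freeness of $G\curvearrowright X$ (on a conull invariant subset) forces $\kappa(g_1,A)\neq\kappa(g_2,A)$ on every block $A$, since the common value $t$ would give $g_1 x=tx=g_2 x$ for $x\in A$, contradicting $g_1\neq g_2$; property (b) then yields $\rho_{\Hamm}(\pi'_{g_1},\pi'_{g_2})\ge\m(V_0)$. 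Since $\kappa(e_G,\cdot)=e_H$, we have $\pi'_{e_G}=\sigma_{e_H}$, which is $\tau'$-close to $\id$, so taking $h=g^{-1}$ above gives $\rho_{\Hamm}(\pi'_{g^{-1}}\pi'_g,\id)\le\m(V\setminus V_0)+\tau'$ for every $g\in F^2$.

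Finally, for each $g\in F^2$ define $\pi_g\in\Sym(V)$ by agreeing with $\pi'_g$ on the set of $v$ satisfying $\pi'_{g^{-1}}\pi'_g v=v$ and extending arbitrarily to a permutation elsewhere; set $\pi_g=\id$ for $g\notin F^2$. The above bound then yields $\rho_{\Hamm}(\pi_g,\pi'_g)\le\tau/15\le\tau/5$, and standard triangle inequalities upgrade the approximate multiplicativity and separation of $\pi'$ on $F$ to the $(F,\tau)$-approximation property for $\pi$. The main obstacle will be the bookkeeping needed to keep each error within the tight factor $\tau/(60|L|^2)$; the $|L|^2$ in the denominator is precisely what is needed to absorb the sofic approximation error across products of pairs in $L$.
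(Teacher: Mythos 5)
The paper does not supply a proof of this lemma: it records it as a specialization of Lemma~4.2 of \cite{KerLi19} (bounded cocycles making $L$ finite and permitting fixed finite partitions), so there is no internal argument to compare against directly. Your proposal transposes the mechanism of Lemma~\ref{L-approximation for group} to the measure setting, and the overall structure --- carve out a large good set $V_0$, verify exact multiplicativity and injectivity of $\pi'$ there via the cocycle identity, and patch $\pi'$ to permutations $\pi_g$ --- is the right one and the constant budget $\tau/(60|L|^2)$ is comfortable.

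Three points deserve tightening. First, the step $\pi'_g\pi'_h v=\sigma_{\kappa(g,hA)}\sigma_{\kappa(h,A)}v$ quietly uses that $\kappa(g,\cdot)$ is constant on $hA$, which is not automatic (the blocks of ${}_{F^2}\sP$ only make $\kappa(g',\cdot)$ constant on $A$ for $g'\in F^2$); it holds here because $\kappa(g,hy)=\kappa(gh,y)\kappa(h,y)^{-1}$ and both $gh,h\in F^2$, but you should say so. Second, the separation argument as stated is too strong: freeness of $G\curvearrowright X$ is only a.e., so a block $A$ with $\kappa(g_1,A)=\kappa(g_2,A)$ need not be empty, only $\mu$-null; the $\Hom_\mu$ condition gives $\sum_{\mu(A)=0}\m(\varphi(A))<\tau'$, and this extra $\tau'$ must be deducted (or the non-null-block requirement absorbed into $V_0$). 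Third, the closing sentence about ``standard triangle inequalities'' hides a real issue: $\pi'_h$ is not a permutation, so you cannot cancel it from Hamming distances. The clean fix is to also require $\sigma_{e_H}v=v$ in the good set $V_1$ (one more $\tau'$); then for $v\in V_1$ and every $g\in F^2$ one has $\pi'_{g^{-1}}\pi'_gv=\pi'_{e_G}v=\sigma_{e_H}v=v$, so $\pi_g=\pi'_g$ on $V_1$ simultaneously, and moreover $\pi'_{g^{-1}}\pi'_g\pi'_hv=\pi'_{g^{-1}}\pi'_{gh}v=\pi'_hv$ shows $\pi'_hv$ lies in the patching set for $\pi_g$, so $\pi_g\pi_hv=\pi'_g\pi'_hv=\pi'_{gh}v=\pi_{gh}v$ pointwise on $V_1$. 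With $\m(V\setminus V_1)\le 5|L|^2\tau'\le\tau/12$ the $(F,\tau)$-approximation property and the bound $\rho_{\Hamm}(\pi_g,\pi'_g)\le\tau/5$ follow immediately.
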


\begin{proof}[Proof of Proposition~\ref{P-sofic SC under BOE}]
Let $\Upsilon_H$ be a function $\cF(H)\rightarrow [0, \infty)$.
Define a function $\Upsilon_G: \cF(G)\rightarrow [0, \infty)$ by $\Upsilon_G(F)=2\Upsilon_H(\kappa(F, X))$.

Since $G\curvearrowright (X, \mu)$ has property sofic SC, there exists an $S_G\in \overline{\cF}(G)$ such that for any $T_G\in \overline{\cF}(G)$ there are $C_G, n_G\in \Nb$, $S_{G, 1}, \dots, S_{G, n_G}\in \overline{\cF}(G)$,  a finite Borel partition $\sC_G$ of $X$, an $L_G\in \overline{\cF}(G)$, and $0<\tau_G<1$  such that, for any
$(L_G, \tau_G)$-approximation $\pi: G\rightarrow \Sym(V)$ for $G$ with $\Hom_\mu(\sC_G, L_G, \tau_G, \pi)\neq \emptyset$, there are subsets $W_G$ and $\cV_{G, j}$ of $V$ for $1\le j\le n_G$  satisfying the following conditions:
\begin{enumerate}
\item $\sum_{j=1}^{n_G} \Upsilon_G(S_{G, j})\m(\cV_{G, j})\le 1$,
\item $\bigcup_{g\in S_G}\pi_gW_G=V$,
\item if $w_1, w_2\in W_G$ satisfy $\pi_gw_1=w_2$ for some $g\in T_G$ then $w_1$ and $w_2$ are connected by a path of length at most $C_G$ in which each edge is an $S_{G, j}$-edge with both endpoints in $\cV_{G, j}$ for some $1\le j\le n_G$.
\end{enumerate}

Set $S_H=\kappa(S_G, X)\in \overline{\cF}(H)$.

Let $T_H\in \overline{\cF}(H)$. Set $T_G=\lambda(T_H, X)\in \overline{\cF}(G)$.
Then we have $C_G, n_G$, $S_{G, j}$ for $1\le j\le n_G$, $\sC_G$, $L_G$, and $\tau_G$ as above. Set $C_H=C_G$, $n_H=n_G+1$, $S_{H, j}=\kappa(S_{G, j}, X)\in \overline{\cF}(H)$ for $1\le j\le n_G$, and $S_{H, n_H}=T_H\in \overline{\cF}(H)$.
Take $0<\delta_H<1/(6\Upsilon_H(T_H)|T_H|)$.
Set $U=L_G\cup S_G\cup T_G\cup \bigcup_{j=1}^{n_G}S_{G, j}\in \cF(G)$,
$\sC_H=(\sC_G)_{L_G}\vee {}_{U^{2(100+C_G)}}\sP\vee {}_{T_H}\sP$, $L_H=\kappa(U^{2(100+C_G)}, X)\in \cF(H)$, and
\begin{align*}
\tilde{\tau}_G &= \min\big\{\tau_G/4, \delta_H/(2|S_G|\cdot|U|^{2(100+C_G)})\big\}>0,\\
\tau_H &= \min\big\{\tilde{\tau}_G/(60|L_H|^2), \tau_G/(2|\kappa(L_G, X)|)\big\}>0.
\end{align*}

Let $\sigma: H\rightarrow \Sym(V)$ be an $(L_H, \tau_H)$-approximation for $H$ with
$\Hom_\mu(\sC_H, L_H, \tau_H, \sigma)$ nonempty.
Take $\varphi\in \Hom_\mu(\sC_H, L_H, \tau_H, \sigma)$. Define $\pi': U^{2(100+C_G)}\rightarrow V^V$ by
\[
\pi'_gv=\sigma_{\kappa(g, A)}v
\]
for all $g\in U^{2(100+C_G)}$, $A\in \sP_g$, and $v\in \varphi(A)$. By Lemma~\ref{L-Shannon approximation for group measure}
there is a $(U^{100+C_G}, \tilde{\tau}_G)$-approximation $\pi: G\rightarrow \Sym(V)$ for $G$ such that
$\rho_{\Hamm}(\pi_g, \pi'_g)\le \tilde{\tau}_G$ for all $g\in U^{100+C_G}$.

Let $g\in L_G$. We have
\begin{align*}
\sum_{A\in \sC_G}\m(\pi_g\varphi(A)\Delta \varphi(gA))&\le \sum_{A\in \sC_G}\m(\pi_g\varphi(A)\Delta \pi_g'\varphi(A))+\sum_{A\in \sC_G}\m(\pi_g'\varphi(A)\Delta \varphi(gA))\\
&\le 2\rho_{\Hamm}(\pi_g, \pi_g')+\sum_{A\in \sC_G}\sum_{B\in \sP_g}\m(\pi_g'\varphi(A\cap B)\Delta \varphi(g(A\cap B))).
\end{align*}
For any $A\in \sC_G$ and $B\in \sP_g$, say $h=\kappa(g, B)\in L_H$, we have
$\pi_g'\varphi(A\cap B)=\sigma_h\varphi(A\cap B)$ and $\varphi(g(A\cap B))=\varphi(h(A\cap B))$, whence
\[
\sum_{A\in \sC_G}\m(\pi_g'\varphi(A\cap B)\Delta \varphi(g(A\cap B)))=\sum_{A\in \sC_G}\m(\sigma_h\varphi(A\cap B)\Delta \varphi(h(A\cap B)))\le \tau_H.
\]
Therefore
\begin{align*}
\sum_{A\in \sC_G}\m(\pi_g\varphi(A)\Delta \varphi(gA))&\le 2\rho_{\Hamm}(\pi_g, \pi_g')+\sum_{B\in \sP_g}\sum_{A\in \sC_G}\m(\pi_g'\varphi(A\cap B)\Delta \varphi(g(A\cap B)))\\
&\le 2\tilde{\tau}_G+\sum_{B\in \sP_g}\tau_H\\
&\le \frac{\tau_G}{2} + |\kappa(g, X)|\tau_H\le \tau_G.
\end{align*}
We also have
\[
\sum_{A\in (\sC_G)_{L_G}}|\m(\varphi(A))-\mu(A)|\le \sum_{B\in \sC_H}|\m(\varphi(B))-\mu(B)|\le \tau_H\le \tau_G.
\]
Therefore $\varphi\in \Hom_\mu(\sC_G, L_G, \tau_G, \pi)$. Then we have $W_G$ and $\cV_{G, j}$ for $1\le j\le n_G$ as above.

We now verify conditions (i)-(iii) in Definition~\ref{D-CSC 1} as referenced in Definition~\ref{D-CSC 3}.
Denote by $V_1$ the set of $v\in V$ satisfying $\pi_{g_1g_2}v=\pi_{g_1}\pi_{g_2}v$ for all $g_1, g_2\in U^{100+C_G}$. Then $\m(V\setminus V_1)\le \tilde{\tau}_G|U|^{2(100+C_G)}$.
Also denote by $V_2$ the set of $v\in V$ satisfying $\pi_g\pi_{g_1}v=\pi'_g\pi_{g_1}v$  for all $g\in U$ and $g_1\in U^{C_G}$. Then $\m(V\setminus V_2)\le \tilde{\tau}_G|U|^{1+C_G}$.
Set $W_H'=W_G\cap V_1\cap V_2$, and $W_H=W_H'\cup \sigma_{e_H}^{-1}(V\setminus \bigcup_{h\in S_H}\sigma_hW_H')$. Then $\bigcup_{h\in S_H}\sigma_hW_H=V$, verifying condition (ii) in Definition~\ref{D-CSC 1}.
Note that
\begin{align*}
\m(W_H\setminus W_H')&\le 1-\m\bigg(\bigcup_{h\in S_H}\sigma_hW_H'\bigg)\\
&\le 1-\m\bigg(\bigcup_{g\in S_G}\pi'_g W_H'\bigg)\\
&=1-\m\bigg(\bigcup_{g\in S_G}\pi_g W_H'\bigg)\\
&\le |S_G|(\m(V\setminus V_1)+\m(V\setminus V_2))\\
&\le |S_G|(\tilde{\tau}_G|U|^{2(100+C_G)}+ \tilde{\tau}_G|U|^{1+C_G})\le \delta_H.
\end{align*}
Put $\cV_{H, j}=\cV_{G, j}$ for all $1\le j\le n_G$, and
\[
\cV_{H, n_H}=\bigcup_{h\in T_H}((W_H\setminus W_H')\cup \sigma_h(W_H\setminus W_H')\cup \sigma_h^{-1}(W_H\setminus W_H')).
\]
Then
\[
\m (\cV_{H, n_H})\le (2|T_H|+1)\m (W_H\setminus W_H')\le 3|T_H|\delta_H,
\]
and hence
\begin{align*}
\sum_{j=1}^{n_H} \Upsilon_H(S_{H, j})\m(\cV_{H, j})&=\Upsilon_H(T_H)\m(\cV_{H, n_H})+\sum_{j=1}^{n_G} \Upsilon_H(\kappa(S_{G, j}, X))\m(\cV_{G, j})\\
&=\Upsilon_H(T_H)\m(\cV_{H, n_H})+\frac{1}{2}\sum_{j=1}^{n_G} \Upsilon_G(S_{G, j})\m(\cV_{G, j})\\
&\le 3\Upsilon_H(T_H)|T_H|\delta_H+\frac12 \le 1,
\end{align*}
verifying condition (i) in Definition~\ref{D-CSC 1}.
Let $h\in T_H$ and $w_1, w_2\in W_H$ with $\sigma_hw_1=w_2$.  If $w_1\not\in W_H'$ or $w_2\not\in W_H'$, then $(w_1, w_2)$ is an $S_{H, n_H}$-edge with both endpoints in $\cV_{H, n_H}$. Thus we may assume that $w_1, w_2\in W_H'$.
Then
$w_1\in \varphi(A)$ for some $A\in \sP_h$. Set $g=\lambda(h, A)\in T_G$. Then
\[
w_2=\sigma_hw_1=\pi'_gw_1=\pi_gw_1,
\]
and so $w_1$ and $w_2$ are connected by a path of length at most $C_G$ in which each edge is an $S_{G, j}$-edge with both
endpoints in $\cV_{G, j}$ for some $1\le j\le n_G$.
It is easily checked that such an edge is also an $S_{H, j}$-edge. This verifies condition (iii) in Definition~\ref{D-CSC 1}.
\end{proof}

\section{Topological entropy and continuous orbit equivalence} \label{S-continuous OE}

Our energies in this section will be invested in the proof of Theorem~\ref{T-continuous OE},
which in conjunction with Theorem~\ref{T-w-normal amenable}
and Proposition~\ref{P-product} yields Theorem~\ref{T-cts 1}.

\begin{theorem}\label{T-continuous OE}
Let $G\curvearrowright X$ and $H\curvearrowright Y$ be topologically free continuous actions on compact metrizable spaces,
and suppose that they are continuously orbit equivalent.
Let $\sS$ be a collection of sofic approximations for $G$,
and suppose that the action $G\curvearrowright X$ has property $\sS$-SC. Let $\Pi$ be a sofic approximation
sequence for $G$ in $\sS$. Then
\[
\hmax (H\curvearrowright Y)\ge h_\Pi (G\curvearrowright X).
\]
\end{theorem}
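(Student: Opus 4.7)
I would first reduce to the case $X=Y$ with the identity map as the continuous orbit equivalence. Let $d$ be a compatible metric on $X$, let $\kappa$ and $\lambda$ be the associated cocycles (each continuous, hence locally constant into the discrete target group, so that $\lambda(L,X)\subseteq G$ is finite for every finite $L\subseteq H$), and let $X_0\subseteq X$ be the dense $G_\delta$-set on which both $G$ and $H$ act freely. Fix $\eps>0$; the aim is to produce, from $\Pi=\{\pi_k\colon G\to\Sym(V_k)\}$, a sofic approximation sequence $\Sigma=\{\sigma_k\colon H\to\Sym(V_k)\}$ for $H$ on the same underlying sets $V_k$ such that $h_\Sigma(H\curvearrowright Y)\ge h^\eps_{\Pi,\infty}(G\curvearrowright X)$, from which the theorem follows on taking $\sup_\eps$.

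For each sufficiently good $\pi=\pi_k$ and each microstate $\varphi\in\Map_d(F,\delta,\pi)$ (which, since $X_0$ is dense, may be perturbed slightly so as to take values in $X_0$), Lemma~\ref{L-approximation for group} yields a sofic approximation $\sigma^\varphi\colon H\to\Sym(V_k)$ close to the formula $\sigma^\varphi_tv=\pi_{\lambda(t,\varphi(v))}v$. The cocycle identity for $\lambda$ and the defining estimate for $\varphi$ together give $\varphi\in\Map_d(L,\eta,\sigma^\varphi)$ whenever $F\supseteq\lambda(L^2,X)$ and $\delta,\eta$ are compatibly small. Since the underlying map $V_k\to X$ is unchanged, an $\eps$-separated set $\Omega\subseteq\Map_d(F,\delta,\pi)$ yields an $\eps$-separated family inside each $\Map_d(L,\eta,\sigma^\varphi)$; the snag is that $\sigma^\varphi$ varies with $\varphi$.

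The heart of the proof is to bound $|\{\sigma^\varphi:\varphi\in\Omega\}|\le e^{o(|V_k|)}$ using property $\sS$-SC, so that pigeonhole produces a single $\sigma_k$ realized by a large proportion of $\Omega$. The permutation $\sigma^\varphi_t$ is determined by the finite-valued function $v\mapsto\lambda(t,\varphi(v))$, equivalently by the pullback $\bar c_\varphi\colon V_k\to\sP$ of the (finite, locally constant) common refinement partition $\sP$ of $X$ on which each $\lambda(t,\cdot)$ for $t\in L$ is constant. Applying property $\sS$-SC with a weight $\Upsilon$ chosen so that $\Upsilon(S_j)$ dominates $|S_j|\log|\sP|$, Conditions~(ii)--(iii) of Definition~\ref{D-CSC 1} together with the near-equivariance $\varphi(\pi_gv)\approx g\varphi(v)$ allow $\bar c_\varphi$ to be reconstructed on all of $V_k$ from its values on the sparse skeleton $\bigcup_j\cV_{k,j}$ (propagating along the short $S_j$-paths from (iii)) plus one ``base label'' per $\pi_S$-block (via (ii)). The weighted bound $\sum_j\Upsilon(S_j)\m(\cV_{k,j})\le 1$ then caps the total encoding length at $o(|V_k|)$, giving the required sub-exponential count.

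For each $k$ one then picks such a $\sigma_k$; these form a genuine sofic approximation sequence for $H$, since Lemma~\ref{L-approximation for group} makes each $\sigma_k$ an $(L,\eta)$-approximation for arbitrary $L,\eta$ once $\pi_k$ is good enough, and $h_\Sigma(H\curvearrowright Y)\ge h^\eps_{\Pi,\infty}(G\curvearrowright X)$ by the pigeonhole count. The principal obstacle is the encoding in the previous paragraph: the quantifier order of Definition~\ref{D-CSC 1} requires $\Upsilon$ to be chosen \emph{before} the finite set $T\in\overline{\cF}(G)$ (which must include $\lambda(L^2,X)$, so that $|\sP|$ depends on $|T|$), and so $\Upsilon$ has to be calibrated to absorb an (at worst polynomial) dependence of $|\sP|$ on $|T|$; the reconstruction of $\bar c_\varphi$ from its skeleton values, using the cocycle identity for $\lambda$ and the bounded path length $C$ from Definition~\ref{D-CSC 1}(iii), also requires care.
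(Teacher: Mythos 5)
Your overall setup matches the paper's: reduce to $X=Y$, use Lemma~\ref{L-approximation for group} to pass from a $G$-microstate $\varphi_0$ to an $H$-sofic approximation $\sigma$ close to $v\mapsto\pi_{\lambda(t,\varphi_0(v))}v$, and invoke property $\sS$-SC to control a subexponential combinatorial encoding. But there is a genuine gap in the central counting step, and the paper resolves it by a different mechanism.

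You propose to bound $|\{\sigma^\varphi : \varphi\in\Omega\}|\le e^{o(|V_k|)}$ by reconstructing the label map $\bar c_\varphi\colon V_k\to\sP$ from its values on the skeleton $\bigcup_j\cV_{k,j}$ plus one ``base label'' per $\pi_S$-block, and then pigeonholing onto a single $\sigma_k$. This does not give a subexponential count. The skeleton part is fine (that is exactly what the weighted bound $\sum_j\Upsilon(S_{k,j})\m(\cV_{k,j})\le 1$ is calibrated to control), but the base labels are fatal: one label per $\pi_S$-block means roughly $|V_k|/|S|$ labels, each from a set of size $|\sP|$, contributing a factor $|\sP|^{|V_k|/|S|}=\exp\big((\log|\sP|)|V_k|/|S|\big)$. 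Since $S$ is fixed once $\Upsilon$ is chosen, while $\sP$ depends on $L_k\to H$ and hence $|\sP|\to\infty$, this factor becomes genuinely exponential as $k$ grows; there is no mechanism in Definition~\ref{D-CSC 1} that forces $|S|$ to be large relative to $\log|\sP|$. Moreover, even within a single class of skeleton-plus-base-label data, the propagation is not exact (the near-equivariance $\varphi(\pi_g v)\approx g\varphi(v)$ only holds up to an $L^2$-small error on most $v$), so $\bar c_\varphi$ is not literally determined; two $\eps$-separated microstates that agree on the skeleton can still land in different $\sP$-atoms at many points of $V_k$, giving $\sigma^\varphi\neq\sigma^\psi$.

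The paper does not try to make $\sigma^\varphi$ coincide for many $\varphi$. Instead it fixes a single reference $\varphi_0$, sets $\sigma_k$ from $\varphi_0$ via Lemma~\ref{L-approximation for group}, and records only the finite data $\Theta(\varphi)(g_j,v_j)=\kappa(g_j,\varphi(v_j))$ for $(g_j,v_j)\in S_{k,j}\times\cV_j$ — note this is $\kappa$, not $\lambda$, and only on the skeleton, so the weighted bound does give $|\Theta(\cdot)|\le e^{\eps|V_k|/2}$. Pigeonholing on $(V_\varphi,\Theta(\varphi))$, and using the cocycle identity for $\kappa$ along the bounded-length paths from condition (iii), one gets the compatibility $\kappa(g,\varphi(w))=\kappa(g,\varphi_0(w))$ for $w\in W$ and $g\in T_k$ with $\pi_g w\in W$. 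This is a far weaker conclusion than $\sigma^\varphi=\sigma^{\varphi_0}$, but it is exactly what is needed for Lemma~\ref{L-construct map}: one builds a \emph{modified} map $\tilde\varphi$ equal to $\varphi$ on $W$ and defined off $W$ by $\tilde\varphi(\pi_g w)=\kappa(g,\varphi_0(w))\varphi(w)$, and this $\tilde\varphi$ (not $\varphi$ itself) lies in $\Map_d(L_k,\delta_k,\sigma_k)$. The separation of the $\tilde\varphi$'s then follows from that of the $\varphi$'s via the spanning property of $W$. In short: replace ``count the $\sigma^\varphi$ and pigeonhole'' with ``count the $\Theta$-data, derive a compatibility on $W$, and repair $\varphi$ off $W$.''
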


For the purpose of establishing the theorem we may assume, by conjugating the $H$-action by a continuous orbit
equivalence, that $Y=X$ and that the identity map on $X$ is an orbit equivalence between the two actions.
As usual we write $\kappa$ and $\lambda$, respectively,
for the cocycle maps $G\times X\rightarrow H$ and $H\times X \rightarrow G$.
As in Section~\ref{SS-invariant cts},
we take a dense $G_\delta$ subset $X_0$ of $X$ such that $X_0$ is $G*H$-invariant
and that both $G$ and $H$ act on $X_0$ freely.

Fix a compatible metric $d$ on $X$ which gives $X$ diameter no bigger than $1$.
For each $t\in G$ (resp.\ $t\in H$) we can find an $\eta_t>0$ such that
for any $x, y\in X$ with $d(x, y)\le \eta_t$ we have $\kappa(t, x)=\kappa(t, y)$ (resp.\ $\lambda(t, x)=\lambda(t, y)$),
and for a nonempty finite subset $L$ of $G$ or $H$ we set $\eta_L=\min_{t\in L}\eta_t>0$.

\begin{lemma} \label{L-construct map}
Let $L\in \overline{\cF}(H)$ and $0<\delta, \tau<1$ with $\tau<\delta^2$. Set $F=\lambda(L^2, X)\in \overline{\cF}(G)$ and
\[
\tau'=\min\big\{\eta_{L^2}\tau^{1/2}/(8|F|)^{1/2}, \tau/(22|F|^2)\big\}>0.
\]
Let $\delta_1>0$ be such that $(\tau+7\delta_1)^{1/2}\le \delta$.
Let $\pi: G\rightarrow \Sym(V)$ be an $(F, \tau')$-approximation for $G$.
Suppose that $S\in \overline{\cF}(G)$ and that $W$ is a subset of $V$ satisfying the following conditions:
\begin{enumerate}
\item $\pi_{g^{-1}}\pi_gw=w$ for all $w\in W$ and $g\in S$,

\item $\pi_g\pi_a\pi_h w=\pi_{gah} w$ for all $g, h\in S$, $a\in \lambda(L, X)$, and $w\in W$,

\item $\m ( \bigcup_{g\in S}\pi_gW ) \ge 1-\delta_1$.
\end{enumerate}
Take $0<\delta_2\le \eta_{S\cup \lambda(L, X)}$ such that for any $x, y\in X$ with $d(x, y)\le \delta_2$ one has $\max_{t\in \kappa(S, X)}d(tx, ty)\le \delta_1$.
Set
\[
\delta'=\delta_1^{1/2}\delta_2/(|S|^{1/2}|S\lambda(L, X)S|^{1/2})>0.
\]
Let $\varphi_0$ be a map in $\Map_d(F, \tau', \pi)\cap \Map_d(S\lambda(L, X)S, \delta', \pi)$ with $\varphi_0(V)\subseteq X_0$ and
$\varphi$ a map in $\Map_d(S\lambda(L, X)S, \delta', \pi)$ such that
\[
\kappa(g, \varphi_0(w))=\kappa(g, \varphi(w))
\]
for all $w\in W$ and $g\in S\lambda(L, X)S$ satisfying $\pi_gw\in W$.
Let $\sigma: H\rightarrow \Sym(V)$ be a map such that $\rho_{\Hamm}(\sigma_t, \sigma'_t)\le \tau$ for all $t\in L$, where $\sigma': H\rightarrow V^V$ is given  by
\[
\sigma'_tv=\pi_{\lambda(t, \varphi_0(v))}v
\]
for all $t\in H$ and $v\in V$.
Take $\tilde{\varphi}: V\rightarrow X$ such that $\tilde{\varphi}=\varphi$ on $W$ and such that for each $v\in \bigcup_{g\in S}\pi_gW$ one has
\[
\tilde{\varphi}(v)=\kappa(g, \varphi_0(w))\varphi(w)
\]
for some $g\in S$ and $w\in W$ with $\pi_gw=v$. Then $\tilde{\varphi}\in \Map_d(L, \delta, \sigma)$.
\end{lemma}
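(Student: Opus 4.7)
For each $t \in L$ the goal is to show $d_2(\tilde\varphi\sigma_t, t\tilde\varphi) \le \delta$. The plan is to reduce this to a pointwise estimate on a high-measure subset $V^\dag \subseteq V$ (depending on $t$). On $V^\dag$ I will show $d(\tilde\varphi(\sigma_t v), t\tilde\varphi(v)) \le \delta_1$ via a cocycle manipulation, while on $V \setminus V^\dag$ the diameter bound $d \le 1$ applies. Calibrating $V^\dag$ so that $\m(V \setminus V^\dag) \le 6\delta_1 + \tau$, this produces $d_2(\tilde\varphi\sigma_t, t\tilde\varphi)^2 \le \delta_1^2 + 6\delta_1 + \tau \le \tau + 7\delta_1 \le \delta^2$.

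Set $V_W = \bigcup_{g \in S}\pi_g W$. Define $V^\dag$ to be the set of $v \in V$ such that (a) $v \in V_W$; (b) $\sigma_t v \in V_W$; (c) $\sigma_t v = \sigma'_t v$; and (d), writing $v = \pi_g w$ and $v' := \sigma_t v = \pi_{g'} w'$ for the chosen $g, g' \in S$ and $w, w' \in W$, setting $a = \lambda(t, \varphi_0(v))$ and $\alpha = g'^{-1} a g$, the three inequalities $d(g\varphi_0(w), \varphi_0(v)) \le \delta_2$, $d(\alpha\varphi_0(w), \varphi_0(w')) \le \delta_2$, and $d(\alpha\varphi(w), \varphi(w')) \le \delta_2$ all hold. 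The complements of (a) and (b) each have measure at most $\delta_1$ by the hypothesis on $W$ and since $\sigma_t \in \Sym(V)$ is measure-preserving; (c) fails on a set of measure at most $\tau$ by the Hamming hypothesis. Each inequality in (d) is a pointwise version of a $\Map_d$-estimate $d_2(h\varphi_0, \varphi_0\pi_h) \le \delta'$ or $d_2(h\varphi, \varphi\pi_h) \le \delta'$ with $h \in S\lambda(L, X)S$; a measure-preserving change of variables $u = \pi_{g^{-1}}v$ followed by Chebyshev's inequality produces a bound of order $(\delta')^2/\delta_2^2$ for each such $h$, and the calibration $\delta' = \delta_1^{1/2}\delta_2/(|S|^{1/2}|S\lambda(L,X)S|^{1/2})$ ensures that summing over the relevant finite index sets gives at most a small multiple of $\delta_1$.

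On $V^\dag$, conditions (i) and (ii) force $w' = \pi_\alpha w$ exactly, so $\alpha \in S\lambda(L, X) S$ and $\pi_\alpha w = w' \in W$; the compatibility hypothesis between $\varphi_0$ and $\varphi$ then gives $\kappa(\alpha, \varphi_0(w)) = \kappa(\alpha, \varphi(w))$. The cocycle identity expands
\[
\kappa(\alpha, \varphi_0(w)) = \kappa(g'^{-1}, ag\varphi_0(w))\,\kappa(a, g\varphi_0(w))\,\kappa(g, \varphi_0(w)).
\]
From $d(g\varphi_0(w), \varphi_0(v)) \le \delta_2 \le \eta_a$ and $\kappa(\lambda(t, x), x) = t$ for $x \in X_0$ (recalling $\varphi_0(V) \subseteq X_0$), the middle factor $\kappa(a, g\varphi_0(w))$ collapses to $t$. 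From $\alpha\varphi_0(w) = g'^{-1}ag\varphi_0(w)$ together with $d(\alpha\varphi_0(w), \varphi_0(w')) \le \delta_2 \le \eta_{g'}$, the value $\kappa(g', g'^{-1}ag\varphi_0(w))$ coincides with $\kappa(g', \varphi_0(w'))$, so that $\kappa(g', \varphi_0(w'))\,\kappa(g'^{-1}, ag\varphi_0(w)) = e_H$ by the cocycle identity. Combining these yields $\kappa(g', \varphi_0(w'))\,\kappa(\alpha, \varphi_0(w)) = t\,\kappa(g, \varphi_0(w))$, and substituting via $\alpha\varphi(w) = \kappa(\alpha, \varphi(w))\varphi(w) = \kappa(\alpha, \varphi_0(w))\varphi(w)$ produces $t\tilde\varphi(v) = \kappa(g', \varphi_0(w'))\,\alpha\varphi(w)$. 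Since $\tilde\varphi(\sigma_t v) = \kappa(g', \varphi_0(w'))\,\varphi(w')$, the uniform continuity of the action of $\kappa(g', \varphi_0(w')) \in \kappa(S, X)$ on $X$, guaranteed by the choice of $\delta_2$, combined with the remaining inequality $d(\varphi(w'), \alpha\varphi(w)) \le \delta_2$ from (d), yields $d(\tilde\varphi(\sigma_t v), t\tilde\varphi(v)) \le \delta_1$.

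The principal obstacle is the bookkeeping in (d): every approximation identity invoked in the cocycle manipulation must be convertible into a pointwise statement whose summed failure measure, over the finite index sets $S$, $\lambda(L, X)$, and $S\lambda(L, X)S$, remains bounded by a small multiple of $\delta_1$. The precise calibration of $\delta'$ is what enables each summation to close; once this is in place, the cocycle identities proceed cleanly and the rest is triangle-inequality estimation.
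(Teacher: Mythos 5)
Your proposal is correct and follows essentially the same route as the paper's proof: the same Chebyshev-based decomposition of $V$ into a good set (the paper's $V_t\cap V'\cap\sigma_t^{-1}(V')$, your $V^\dag$) where a cocycle computation gives $d(\tilde\varphi(\sigma_t v), t\tilde\varphi(v))\le\delta_1$, and a bad set of measure at most $\tau+6\delta_1$ handled by the diameter bound, yielding $d_2^2 \le \tau+7\delta_1 \le \delta^2$. The cocycle manipulation (collapse of the middle factor to $t$, conversion of the $g'^{-1}$-factor to $\kappa(g',\varphi_0(w'))^{-1}$, and the final application of uniform continuity via the $\delta_2$ bound) is the same as in the paper, merely reorganized to compare $\tilde\varphi(\sigma_t v)$ with $t\tilde\varphi(v)$ directly instead of first comparing after left-translation by $\kappa(g',\varphi_0(w'))^{-1}$.
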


\begin{proof}
For each $t\in H$ set $V_t=\{v\in V: \sigma_tv=\sigma'_tv\}$. Then $\m (V_t )\ge 1-\tau$ for all $t\in L$.

Denote by $V_\varphi$ the set of all $v\in V$ satisfying $d(g\varphi(v), \varphi(\pi_gv))\le \delta_2$ for all $g\in S\lambda(L, X)S$. Then
\[
\m (V\setminus V_\varphi )\le |S\lambda(L, X)S|\bigg(\frac{\delta'}{\delta_2}\bigg)^2 = \frac{\delta_1}{|S|}.
\]
We define $V_{\varphi_0}$ in the same way, and get $\m (V\setminus V_{\varphi_0} )\le \delta_1/|S|$.
Set $W'=W\cap V_\varphi\cap V_{\varphi_0}$ and $V'=(\bigcup_{g\in S}\pi_gW)\setminus (\bigcup_{g\in S}\pi_g(V\setminus (V_\varphi\cap V_{\varphi_0})))$. Then
\begin{align*}
\m (V' )\ge \m\bigg(\bigcup_{g\in S}\pi_gW\bigg)-
|S|\cdot \m (V\setminus V_\varphi )-|S|\cdot \m (V\setminus V_{\varphi_0})\ge 1-3\delta_1.
 \end{align*}

Let $t\in L$, $v_1\in  V_t\cap V'$, and $v_2\in V'$ be such that $\sigma_tv_1=v_2$. Then we can find some $g_1,g_2\in S$ and $w_1, w_2\in W$ such that $\pi_{g_j}w_j=v_j$ and
$\tilde{\varphi}(v_j)=\kappa(g_j, \varphi_0(w_j))\varphi(w_j)$ for $j=1, 2$. Since $v_j\in V'$, we actually have $w_j\in W'$.
We also have
\begin{align*}
w_2=\pi_{g_2^{-1}}v_2
&=\pi_{g_2^{-1}}\sigma_tv_1\\
&=\pi_{g_2^{-1}}\sigma'_tv_1\\
&=\pi_{g_2^{-1}}\pi_{\lambda(t, \varphi_0(v_1))}v_1\\
&=\pi_{g_2^{-1}}\pi_{\lambda(t, \varphi_0(v_1))}\pi_{g_1}w_1\\
&=\pi_{g_2^{-1}\lambda(t, \varphi_0(v_1))g_1}w_1.
\end{align*}
Observe that
\begin{align*}
d(g_1\varphi_0(w_1), \varphi_0(v_1))=d(g_1\varphi_0(w_1), \varphi_0(\pi_{g_1}w_1))\le \delta_2\le \eta_{\lambda(L, X)}
\end{align*}
and
\begin{gather*}
d(\varphi_0(w_2), g_2^{-1}\lambda(t, \varphi_0(v_1))g_1\varphi_0(w_1))\hspace*{60mm}\\
\hspace*{10mm}\ = d(\varphi_0(\pi_{g_2^{-1}\lambda(t, \varphi_0(v_1))g_1}w_1), g_2^{-1}\lambda(t, \varphi_0(v_1))g_1\varphi_0(w_1))\le \delta_2\le \eta_S,
\end{gather*}
and hence
\[
\kappa(\lambda(t, \varphi_0(v_1)), g_1\varphi_0(w_1))=\kappa(\lambda(t, \varphi_0(v_1)), \varphi_0(v_1))=t
\]
and
\begin{align*}
\kappa(g_2^{-1}, \lambda(t, \varphi_0(v_1))g_1\varphi_0(w_1))=\kappa(g_2, g_2^{-1}\lambda(t, \varphi_0(v_1))g_1\varphi_0(w_1))^{-1}=\kappa(g_2, \varphi_0(w_2))^{-1}.
\end{align*}
Therefore
\begin{align*}
\lefteqn{\kappa(g_2^{-1}\lambda(t, \varphi_0(v_1))g_1, \varphi_0(w_1))}\hspace*{10mm}\\
&=\kappa(g_2^{-1}, \lambda(t, \varphi_0(v_1))g_1\varphi_0(w_1))\kappa(\lambda(t, \varphi_0(v_1)), g_1\varphi_0(w_1))\kappa(g_1, \varphi_0(w_1))\\
&=\kappa(g_2, \varphi_0(w_2))^{-1}t\kappa(g_1, \varphi_0(w_1)).
\end{align*}
We then get
\begin{align*}
\kappa(g_2, \varphi_0(w_2))^{-1}\tilde{\varphi}(v_2)&=\varphi(w_2)\\
&\approx_{\delta_2} g_2^{-1}\lambda(t, \varphi_0(v_1))g_1\varphi(w_1)\\
&=\kappa(g_2^{-1}\lambda(t, \varphi_0(v_1))g_1, \varphi(w_1))\varphi(w_1)\\
&=\kappa(g_2^{-1}\lambda(t, \varphi_0(v_1))g_1, \varphi_0(w_1))\varphi(w_1)\\
&=\kappa(g_2, \varphi_0(w_2))^{-1}t\kappa(g_1, \varphi_0(w_1))\varphi(w_1)\\
&=\kappa(g_2, \varphi_0(w_2))^{-1}t\tilde{\varphi}(v_1),
\end{align*}
and consequently
$d(\tilde{\varphi}(v_2), t\tilde{\varphi}(v_1))\le \delta_1$.
We conclude that
\begin{align*}
d_2(t\tilde{\varphi}, \tilde{\varphi}\sigma_t)\le (\tau+6\delta_1+\delta_1^2)^{1/2} \le (\tau+7\delta_1)^{1/2}\le \delta
\end{align*}
and hence that $\tilde{\varphi}\in \Map_d(L, \delta, \sigma)$.
\end{proof}

\begin{proof}[Proof of Theorem~\ref{T-continuous OE}]
Let $\Pi = \{ \pi_k : G\to \Sym (V_k ) \}_{k=1}^\infty$ be a sofic approximation sequence in $\sS$
with $h_\Pi(G\curvearrowright X)\ge 0$. Let $\varepsilon>0$.
To establish the theorem it is enough to show the existence of
a sofic approximation sequence $\Sigma$ for $H$ such that
$h_\Sigma(H\curvearrowright Y)\ge h_{\Pi, 2}^\varepsilon(G\curvearrowright X)-2\varepsilon$.

For each $F\in \cF(G)$, since $\kappa: G\times X\rightarrow H$ is continuous there exists a finite clopen partition ${}_F \sP$ of $X$ such that for every $g\in F$
the map $x\mapsto \kappa(g,x)$ is constant on each member of ${}_F \sP$. Define $\Upsilon: \cF(G)\rightarrow [0, \infty)$ by $\Upsilon(F)=(2/\varepsilon)\log |{}_F \sP |$.

Take a decreasing sequence $1>\delta_1>\delta_2>\dots$ converging to $0$. Take also a decreasing sequence $1>\tau_1>\tau_2>\dots>0$ with $\tau_k^2<\delta_k$ for all $k$.
Choose an increasing sequence $\{L_k\}$ in $\overline{\cF}(H)$ with union $H$.

For each $k\in \Nb$, set $F_k=\lambda(L_k^2, X)\subseteq G$ and $T_k'=\lambda(L_k, X)\subseteq F_k$.

Since $G\curvearrowright X$ has property $\sS$-SC, there is some $S\in \overline{\cF}(G)$ such that for each $k\in \Nb$, there are $C_k, n_k\in \Nb$,  $S_{k, 1}, \dots, S_{k, n_k}\in \overline{\cF}(G)$, $F^\sharp_k\in \cF(G)$, and $\delta^\sharp_k>0$ such that for any good enough sofic approximation $\pi: G\rightarrow \Sym(V)$ in $\sS$ with $\Map_d(F^\sharp_k, \delta^\sharp_k, \pi)\neq \emptyset$ there are subsets $W'$ and $\cV_j$ of $V$ for $1\le j\le n_k$
satisfying the following conditions:
\begin{enumerate}
\item $\sum_{j=1}^{n_k} \Upsilon(S_{k,j})\m(\cV_j)\le 1$,
\item $\bigcup_{g\in S}\pi_gW'=V$,
\item if $w_1, w_2\in W'$ satisfy $\pi_gw_1=w_2$ for some $g\in T_k := ST_k'S\in \overline{\cF}(G)$ then $w_1$ and $w_2$ are connected by a path of length at most $C_k$ in which each edge is an $S_{k, j}$-edge with both endpoints in $\cV_j$ for some $1\le j\le n_k$.
\end{enumerate}
Take $\varepsilon'>0$ such that for any $x, y\in X$ with $d(x, y)\le \varepsilon'$ one has
$d(gx, gy)<\varepsilon/8$ for every $g\in S$.

Fix $k\in \Nb$.
Set
\[
\tau_k'=\min\big\{\eta_{L_k^2}\tau_k^{1/2}/(8|F_k|)^{1/2}, \tau_k/(22|F_k|^2)\big\}>0.
\]
Let $0<\delta_{k, 1}<1/2$ be such that $(\tau_k+7\delta_{k, 1})^{1/2}\le \delta_k$ and $((\varepsilon/4)^2+\delta_{k, 1})^{1/2}<\varepsilon/2$.
Take $0<\delta_{k, 2}\le \eta_{T_k}$ such that for any $x, y\in X$ with $d(x, y)\le \delta_{k, 2}$ one has $\max_{t\in \kappa(S, X)}d(tx, ty)\le \delta_{k, 1}$.
Set $\delta_k'=\delta_{k, 1}^{1/2}\delta_{k, 2}/(|S|^{1/2}|T_k|^{1/2})>0$ and $\eta_k=\eta_{\bigcup_{j=1}^{n_k}S_{k, j}}$.
By Stirling's formula there is some $0<\gamma_k<\delta_{k, 1}/(3|S|)$ such that for any nonempty finite set $V$ the number of subsets of $V$ with cardinality no bigger than $\gamma_k|V|$ is at most $e^{\varepsilon |V|/2}$. Set $S_k=(\bigcup_{j=1}^{n_k}S_{k, j})^{C_k}\in \cF(G)$ and $\delta_k''=\min\{\delta_k', \min\{\eta_k, \varepsilon/16\} (\gamma_k/|S_k\cup S|)^{1/2}, \tau_k'\}>0$.

Take an $m_k\ge k$ large enough so that
\[
\frac{1}{|V_{m_k}|} \log N_{\varepsilon} (\Map_d(T_k\cup S_k\cup F_k\cup F^\sharp_k, \min\{\delta_k''/2, \delta^\sharp_k\}, \pi_{m_k} ), d_2 )
\ge \max\{0, h^{\varepsilon}_{\Pi, 2}(G\curvearrowright X)-\varepsilon\}
\]
and so that $\pi_{m_k}: G\rightarrow \Sym(V_{m_k})$ is an $(F_k, \tau_k')$-approximation for $G$ and also a good enough sofic approximation for $G$ to guarantee the existence of $W'$ and $\cV_1, \dots, \cV_{n_k}$ as above. Denote by $\tilde{V}_{m_k}$ the set of all $w\in V_{m_k}$ satisfying
\begin{enumerate}
\item[(iv)] $\pi_{m_k, e_G}w=w$,

\item[(v)] $\pi_{m_k, g^{-1}}\pi_{m_k, g}w=w$ for all  $g\in S$,

\item[(vi)] $\pi_{m_k, g}\pi_{m_k, a}\pi_{m_k, h}w=\pi_{m_k, gah}w$ for all $g, h\in S$ and $a\in T_k'$,

\item[(vii)] $\pi_{m_k, gh}w=\pi_{m_k, g}\pi_{m_k, h}w$ for all $g, h\in S_k$,

\item[(viii)] $\pi_{m_k, g}w\neq \pi_{m_k, h}w$ for all distinct $g, h$ in $T_k\cup S_k$.
\end{enumerate}
Taking $m_k$ sufficiently large, we may assume that $\m(\tilde{V}_{m_k})\ge 1-\delta_{k, 1}/(3|S|)$.

Take a $(d_2, \varepsilon)$-separated subset $\Phi$ of $\Map_d(T_k\cup S_k\cup F_k, \delta_k''/2, \pi_{m_k} )$ with maximum cardinality. Since $X_0$ is dense in $X$, we may perturb each element of $\Phi$ to obtain a $(d_2, \varepsilon/2)$-separated subset $\Phi_1$ of $\Map_d(T_k\cup S_k\cup F_k, \delta_k'', \pi_{m_k} )$ with
\[
|\Phi_1|=|\Phi|=N_{\varepsilon} (\Map_d(T_k\cup S_k\cup F_k, \delta_k''/2, \pi_{m_k} ), d_2 )
\]
such that $\varphi(V_{m_k})\subseteq X_0$ for all $\varphi\in \Phi_1$.

For each $\psi\in \Map_d(T_k\cup S_k\cup F_k, \delta_k'', \pi_{m_k} )$, using the fact that $\delta_k''\le \min\{\eta_k, \varepsilon/16\} (\gamma_k/|S_k\cup S|)^{1/2}$ we have $\m(V_{\psi})\ge 1-\gamma_k$ where
\[
V_\psi:=\{v\in V_{m_k}: d(g\psi(v), \psi(\pi_{m_k, g}v))\le \min\{\eta_k, \varepsilon/16\} \mbox{ for all } g\in S_k\cup S\}.
\]
Thus there is a subset $\Phi_2$ of $\Phi_1$ such that $V_\varphi$ is the same for all $\varphi\in \Phi_2$ and
\[
|\Phi_1|\le |\Phi_2|e^{\varepsilon |V_{m_k}|/2}.
\]
Set $W=W'\cap \tilde{V}_{m_k}\cap V_\varphi\subseteq V_{m_k}$ for $\varphi\in \Phi_2$. Then
\begin{align*}
\lefteqn{\m\bigg(\bigcup_{g\in S}\pi_{m_k, g}W\bigg)}\hspace*{10mm}\\
&\ge \m\bigg(\bigcup_{g\in S}\pi_{m_k, g}W'\bigg)-\m\bigg(\bigcup_{g\in S}\pi_{m_k, g}(V\setminus \tilde{V}_{m_k})\bigg)-\m\bigg(\bigcup_{g\in S}\pi_{m_k, g}(V\setminus V_\varphi)\bigg)\\
&\ge 1-\frac{\delta_{k, 1}}{3}-\gamma_k|S|\ge 1-\delta_{k, 1}.
\end{align*}

For each $\psi: V_{m_k}\rightarrow X$, define $\Theta(\psi)\in \prod_{j=1}^{n_k}H^{S_{k, j}\times \cV_j}$ by $\Theta(\psi)(g_j, v_j)=\kappa(g_j, \psi(v_j))$ for $1\le j\le n_k$ and $(g_j, v_j)\in S_{k, j}\times \cV_j$. Then
\[
|\Theta(X^{V_{m_k}})|\le \prod_{j=1}^{n_k}|{}_{S_{k, j}}\sP|^{|\cV_j|}=\prod_{j=1}^{n_k}e^{(\varepsilon/2)\Upsilon(S_{k, j})|\cV_j|}=e^{(\varepsilon/2)\sum_{j=1}^{n_k}\Upsilon(S_{k, j})|\cV_j|}\le e^{\varepsilon |V_{m_k}|/2}.
\]
Thus we can find a subset $\Phi_3$ of $\Phi_2$ such that $\Theta(\varphi)$ is the same for all $\varphi\in \Phi_3$ and
\[
|\Phi_2|\le |\Phi_3|e^{\varepsilon |V_{m_k}|/2}.
\]

We claim that for any  $g\in T_k$ and $w_1, w_2\in W$ with $\pi_{m_k, g}w_1=w_2$, the element $\kappa(g, \varphi(w_1))\in H$ is the same for all $\varphi\in \Phi_3$. If $w_1=w_2$, then $g=e_G$ and hence $\kappa(g, \varphi(w_1))=e_H$ for all $\varphi\in \Phi_3$. Thus we may assume that $w_1\neq w_2$.
We can find $l\le C_k$, $g_1, \dots, g_l\in G$, $w_1=w_1', w_2', \dots, w_{l+1}'=w_2$ in $V_{m_k}$ such that for each $1\le i\le l$ one has $\pi_{m_k, g_i}w_i'=w_{i+1}'$, $g_i\in S_{k, j_i}$ and $w_i', w_{i+1}'\in \cV_{j_i}$  for some $1\le j_i\le n_k$. Since $w_1'\in W\subseteq \tilde{V}_{m_k}$, we have
\[
\pi_{m_k, g_ig_{i-1}\dots g_1}w_1'=\pi_{m_k, g_i}\pi_{m_k, g_{i-1}}\dots \pi_{m_k, g_1}w_1'=w_{i+1}'
\]
for all $1\le i\le l$. In particular, $\pi_{m_k, g_lg_{l-1}\dots g_1}w_1'=w_{l+1}'=\pi_{m_k, g}w_1'$, and hence
\[
g_lg_{l-1}\dots g_1=g.
\]
Note that $w_1'\in W\subseteq V_\varphi$ for all $\varphi\in \Phi_3\subseteq \Phi_2$. For each $0\le i\le l-1$ and $\varphi\in \Phi_3$, we have $g_ig_{i-1}\dots g_1\in S_k$, and hence
$d(g_ig_{i-1}\dots g_1\varphi(w_1'), \varphi(\pi_{m_k, g_ig_{i-1}\dots g_1}w_1'))\le \eta_k$, which implies that
\[
\kappa(g_{i+1}, g_ig_{i-1}\dots g_1\varphi(w_1'))=\kappa(g_{i+1}, \varphi(\pi_{m_k, g_ig_{i-1}\dots g_1}w_1')).
\]
Then
\begin{align*}
\kappa(g, \varphi(w_1))=\kappa(g_lg_{l-1}\dots g_1, \varphi(w_1'))
&=\prod_{i=0}^{l-1}\kappa(g_{i+1}, g_ig_{i-1}\dots g_1\varphi(w_1'))\\
&=\prod_{i=0}^{l-1}\kappa(g_{i+1}, \varphi(\pi_{m_k, g_ig_{i-1}\dots g_1}w_1'))\\
&=\prod_{i=0}^{l-1}\kappa(g_{i+1}, \varphi(w_{i+1}'))\\
&=\prod_{i=0}^{l-1}\Theta(\varphi)(g_{i+1}, w_{i+1}')
\end{align*}
is the same for all $\varphi\in \Phi_3$. This proves our claim.

Fix one $\varphi_0\in \Phi_3$.  Define $\sigma_k': H\rightarrow V_{m_k}^{V_{m_k}}$   by
\[
\sigma'_{k,t}v=\pi_{\lambda(t, \varphi_0(v))}v
\]
for all $t\in H$ and $v\in V_{m_k}$. By Lemma~\ref{L-approximation for group}
there is an $(L_k, \tau_k)$-approximation $\sigma_k: H\rightarrow \Sym(V_{m_k})$ for $H$ such that
$\rho_{\Hamm}(\sigma_{k, t}, \sigma'_{k, t})\le \tau_k$ for all $t\in L_k$.
For each $\varphi\in \Phi_3$ take a $\tilde{\varphi}: V_{m_k}\rightarrow X$ such that $\tilde{\varphi}=\varphi$ on $W$ and such that for each $v\in \bigcup_{g\in S}\pi_{m_k, g}W$ one has
\[
\tilde{\varphi}(v)=\kappa(g, \varphi_0(w))\varphi(w)
\]
for some $g\in S$ and $w\in W$ with $\pi_{m_k, g}w=v$. We may require that $g$ and $w$ depend only on $v$, and not on $\varphi\in \Phi_3$. By Lemma~\ref{L-construct map} we have $\tilde{\varphi}\in \Map_d(L_k, \delta_k, \sigma_k)$.

Let $\varphi$ and $\psi$ be distinct elements in $\Phi_3$.
Since $d_2(\varphi, \psi)\ge \varepsilon/2> ((\varepsilon/4)^2+\delta_{k, 1})^{1/2}$ and $\m\big(\bigcup_{g\in S}\pi_{m_k, g}W\big)\ge 1-\delta_{k, 1}$, we have $d(\varphi(v), \psi(v))>\varepsilon/4$ for some $v\in \bigcup_{g\in S}\pi_{m_k, g}W$. Then $v=\pi_{m_k, g}w$ for some $g\in S$ and $w\in W$ such that $\tilde{\varphi}(v)=\kappa(g, \varphi_0(w))\varphi(w)$ and $\tilde{\psi}(v)=\kappa(g, \varphi_0(w))\psi(w)$. Using the fact that $w\in W\subseteq V_{\varphi}=V_{\psi}$ we have
\begin{align*}
\lefteqn{d(g\varphi(w), g\psi(w))}\hspace*{10mm}\\
&\ge d(\varphi(\pi_{m_k, g}w), \psi(\pi_{m_k, g}w))-d(\varphi(\pi_{m_k, g}w), g\varphi(w))-d(\psi(\pi_{m_k, g}w), g\psi(w))\\
&\ge \frac{\varepsilon}{4}-\frac{\varepsilon}{16}-\frac{\varepsilon}{16}=
\frac{\varepsilon}{8}.
\end{align*}
From our choice of $\varepsilon'$ we get $d(\tilde{\varphi}(w), \tilde{\psi}(w))=d(\varphi(w), \psi(w))>\varepsilon'$. Therefore $\tilde{\Phi}_3:=\{\tilde{\varphi}: \varphi\in \Phi_3\}$ is $(d_\infty, \varepsilon')$-separated and $|\tilde{\Phi}_3|=|\Phi_3|$.
Thus
\begin{align*}
\frac{1}{|V_{m_k}|}\log N_{\varepsilon'}(\Map_d(L_k, \delta_k, \sigma_k), d_\infty)\ge \frac{1}{|V_{m_k}|}\log |\tilde{\Phi}_3|
&= \frac{1}{|V_{m_k}|}\log |\Phi_3|\\
&\ge \frac{1}{|V_{m_k}|}\log |\Phi_1|-\varepsilon\\
&\ge h^{\varepsilon}_{\Pi, 2}(G\curvearrowright X)-2\varepsilon.
\end{align*}

Now $\Sigma=\{\sigma_k\}_{k\in \Nb}$ is a sofic approximation sequence for $H$.
For any finite set $L\subseteq H$ and $\delta>0$, we have $L\subseteq L_k$  and $\delta>\delta_k$ for all large enough $k$, and hence
\begin{align*}
\varlimsup_{k\to \infty}\frac{1}{|V_{m_k}|}\log N_{\varepsilon'}(\Map_d(L, \delta, \sigma_k), d_\infty)&\ge \varlimsup_{k\to \infty}\frac{1}{|V_{m_k}|}\log N_{\varepsilon'}(\Map_d(L_k, \delta_k, \sigma_k), d_\infty)\\
&\ge h^{\varepsilon}_{\Pi, 2}(G\curvearrowright X)-2\varepsilon.
\end{align*}
Taking infima over $L$ and $\delta$, we obtain
\[
h_\Sigma(H\curvearrowright X)\ge h_{\Sigma, \infty}^{\varepsilon'}(H\curvearrowright X)\ge h^{\varepsilon}_{\Pi, 2}(G\curvearrowright X)-2\varepsilon.
\]
\end{proof}

\section{Measure entropy and bounded orbit equivalence} \label{S-bounded OE}

In this final section we establish Theorem~\ref{T-bounded OE},
which in conjunction with Proposition~\ref{P-product} yields Theorem~\ref{T-measure 1}.

For a general reference on the C$^*$-algebra theory and terminology used
in the following proof, see \cite{Mur90}.

\begin{lemma} \label{L-model}
Let $G\curvearrowright (X, \mu)$ and $H\curvearrowright (Y, \nu)$ be orbit equivalent
free p.m.p.\ actions and
suppose that $H\curvearrowright (Y, \nu)$ is uniquely ergodic. Then there are a zero-dimensional compact metrizable space $Z$, a continuous action $G*H \curvearrowright Z$, and a $G*H$-invariant Borel probability measure $\mu_Z$ on $Z$ of full support
such that
\begin{enumerate}
\item $G\curvearrowright (Z, \mu_Z)$
is measure conjugate to $G\curvearrowright (X, \mu)$ and $H\curvearrowright (Z, \mu_Z)$
is measure conjugate to $H\curvearrowright (Y, \nu)$,

\item $H\curvearrowright Z$ is uniquely ergodic,

\item there is a $G*H$-invariant Borel subset $Z_0$ of $Z$ with $\mu_Z(Z_0)=1$ such that $Gz=Hz$ for every $z\in Z_0$.
\end{enumerate}
If furthermore $G\curvearrowright (X, \mu)$ and $H\curvearrowright (Y, \nu)$ are boundedly orbit equivalent then we may demand that both $G\curvearrowright Z_0$ and $H\curvearrowright Z_0$ be free and that the cocycles $\kappa: G\times Z_0\rightarrow H$ and $\lambda: H\times Z_0\rightarrow G$ extend to continuous maps $G\times Z\rightarrow H$ and $H\times Z\rightarrow G$, so that
$G\curvearrowright Z$ and $H\curvearrowright Z$ are continuously orbit equivalent.
\end{lemma}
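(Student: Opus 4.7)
The plan is to realize both actions simultaneously as a continuous $G*H$-action on the Gelfand spectrum of a carefully chosen separable unital $C^*$-subalgebra $\cA$ of $L^\infty(Y,\nu)$. After discarding null sets, we use the orbit equivalence to identify $X_0$ with $Y_0$, so that $G$ and $H$ both act on the common standard probability space $(Y_0,\nu)$, and by the universal property of the free product this yields an action of $G*H$ on $Y_0$.

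We then pick a countable unital $G*H$-invariant $*$-subalgebra $\cA_0$ of $L^\infty(Y,\nu)$ which is generated by characteristic functions (so that the norm-closure $\cA$ has zero-dimensional Gelfand spectrum), separates points of $Y_0$ modulo null sets, and contains enough indicators to encode the cocycle data and the set where $Gy=Hy$. Writing $Z'=\spec(\cA)$, the $G*H$-action on $\cA$ gives a continuous $G*H$-action on the zero-dimensional compact metrizable space $Z'$, and the state $f\mapsto\int f\,d\nu$ corresponds to a $G*H$-invariant Borel probability measure $\mu_{Z'}$. Set $Z=\supp(\mu_{Z'})$ (closed and $G*H$-invariant) with $\mu_Z=\mu_{Z'}|_Z$; then $\mu_Z$ has full support, and the separation property of $\cA_0$ ensures the inclusion $\cA\hookrightarrow L^\infty(Y,\nu)$ extends to a normal $G*H$-equivariant isomorphism $L^\infty(Z,\mu_Z)\cong L^\infty(Y,\nu)$, yielding the measure conjugacies in (i). Including in $\cA_0$ characteristic functions that detect agreement $Gy=Hy$ makes the corresponding conull subset of $Z$ into a $G*H$-invariant Borel set $Z_0$ with $Gz=Hz$ for $z\in Z_0$, giving (iii).

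The main obstacle is establishing topological unique ergodicity of $H\curvearrowright Z$ in (ii). By the cited Schmidt characterization, the hyperstonean envelope $\widetilde Z=\spec(L^\infty(Y,\nu))$ is already topologically uniquely ergodic under $H$, and the inclusion $\cA\subseteq C(\widetilde Z)$ induces a continuous $H$-equivariant surjection $\widetilde Z\to Z$. Any $H$-invariant Borel probability measure $\mu'$ on $Z$ gives an $H$-invariant state on $\cA=C(Z)$, and to deduce $\mu'=\mu_Z$ we must promote this state to an $H$-invariant mean on $L^\infty(Y,\nu)$, where Schmidt's uniqueness applies. The existence of such an invariant Hahn--Banach extension is not automatic when $H$ is nonamenable, so the hard part is arranging $\cA_0$ large enough that any $H$-invariant state on $\cA$ already pins down a unique invariant extension, for instance by including suitable Koopman-spectral data together with approximations coming from finite measurable partitions. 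This lifting issue is the subtle point of the proof.

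Finally, for the bounded-orbit-equivalence addendum, the finiteness of each cocycle partition $\sP_g=\{X_{g,t}:t\in H\}$ and $\sP_t=\{X_{g,t}:g\in G\}$ lets us put all their indicators into $\cA_0$, making $\kappa:G\times Z\to H$ and $\lambda:H\times Z\to G$ locally constant on clopen sets and hence continuous; including also countably many indicators that separate $z$ from $sz$ for each nonidentity $s$ in $G$ or $H$ and intersecting $Z_0$ with the resulting conull sets ensures that both $G\curvearrowright Z_0$ and $H\curvearrowright Z_0$ are free.
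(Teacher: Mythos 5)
Your construction of $Z$ as the spectrum of a countable $G*H$-invariant subalgebra, with $\mu_Z$ the pushforward and $Z=\supp(\mu_Z)$, and the handling of (i), (iii) and the bounded-OE addendum, all match the paper's argument in substance (the paper phrases the spectrum as a Borel embedding into $\{0,1\}^{\sA\times(G*H)}$, which is the same thing). But your treatment of (ii) has a genuine gap: you correctly identify that the difficulty is passing from an $H$-invariant Borel probability measure on $Z$ (equivalently an $H$-invariant state on the separable algebra $\fA$) to the unique-ergodicity statement about means on all of $L^\infty(Y,\nu)$, but you then declare it ``the subtle point of the proof'' and gesture at Koopman-spectral data without actually resolving it. Moreover, the fix you propose --- arranging $\cA_0$ so that any $H$-invariant state on it admits a unique $H$-\emph{invariant} extension --- is aiming at the wrong target.

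The missing idea is that you never need an $H$-invariant extension. Any state on $\fA$ extends by Hahn--Banach to some mean $\psi$ on $L^\infty$, and if $\sA$ is the underlying $G*H$-invariant Boolean algebra of sets, the condition ``$\psi(s1_A)=\psi(1_A)$ for all $s\in H$ and $A\in\sA$'' automatically passes to the extension, because $s1_A=1_{sA}$ and $sA\in\sA$, so both sides only involve values of $\psi$ on $\fA$. So what one must arrange is that this algebra-local form of $H$-invariance already forces $\psi|_{\sA}=\varphi|_{\sA}$, where $\varphi=\int\cdot\,d\mu$. The paper does this by a compactness bootstrap: for any finite $V\subseteq\sB$ and $\varepsilon>0$ there is a finite $W\supseteq V$ so that every mean that is $H$-invariant on $W$ agrees with $\varphi$ on $V$ to within $\varepsilon$ (otherwise a weak* cluster point would be an $H$-invariant mean disagreeing with $\varphi$, contradicting unique ergodicity of $H\curvearrowright(Y,\nu)$). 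One then iterates: starting from a separating countable family $\sA_1$, choose $\sA_{k+1}$ containing the $G*H$-invariant algebra generated by $\sA_k$ and large enough that $H$-invariance of a mean on $\sA_{k+1}$ forces agreement with $\varphi$ on that algebra; the union $\sA=\bigcup_k\sA_k$ is then a countable $G*H$-invariant algebra that is self-contained in the required sense, and $\varphi|_{\fA}$ is the unique $H$-invariant state on $\fA=\clspn\{1_A:A\in\sA\}$. Without this recursive self-closing step, your argument for (ii) does not go through.
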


\begin{proof}
We may assume that $(X, \mu)=(Y, \nu)$ and that $Gx=Hx$ for every $x\in X$.
Denote by $\sB$ the $\sigma$-algebra of Borel subsets of $X$.
Denote by $\varphi$ the mean $f\mapsto \int_X f\, d\mu$ on $L^\infty(X, \mu)$.

Let $V$ be a finite subset of $\sB$. We claim that for every $\varepsilon>0$ there is a finite subset $W$ of $\sB$ containing $V$ such that for any mean $\psi$ on $L^\infty(X, \mu)$ satisfying $\psi(s1_A)=\psi(1_A)$ for all $s\in H$ and
$A\in W$
one has $|\varphi(1_A)-\psi(1_A)|<\varepsilon$ for all $A\in V$. Suppose to the contrary
that for some $\eps > 0$ and every finite subset $W$ of $\sB$ containing $V$
there is a mean $\psi_W$ on $L^\infty(X, \mu)$ satisfying $\psi_W(s1_A)=\psi_W(1_A)$ for all $s\in H$ and $A\in W$
and $\max_{A\in V}|\varphi(1_A)-\psi_W(1_A)|\ge \varepsilon$. Then any cluster point $\psi$ of the net
$\{ \psi_W \}$ (with index directed by inclusion) is $H$-invariant and $\max_{A\in V}|\varphi(1_A)-\psi(1_A)|\ge \varepsilon$. This contradicts the unique ergodicity of $H\curvearrowright (X, \mu)$, thus verifying our claim.

For any countable subset $V$ of $\sB$, writing $V$ as the union of an increasing sequence $\{V_k\}_k$ of finite subsets of $V$ and taking a  sequence $\{\varepsilon_k\}_k$ of positive numbers tending to $0$, we conclude from above that there is a countable subset $W$ of $\sB$ containing $V$ such that for any mean $\psi$ on $L^\infty(X, \mu)$ satisfying $\psi(s1_A)=\psi(1_A)$ for all $s\in H$ and
$A\in W$
one has $\varphi(1_A)=\psi(1_A)$ for all $A\in V$.

For a given countable set $\sA\subseteq \sB$, denote by $\sA'$ the $G*H$-invariant subalgebra of $\sB$ generated by $\sA$,
which is again countable.
Take a countable subset $\sA_1$ of $\sB$ such that for any distinct $x, y\in X$ one has $1_A(x)\neq 1_A(y)$ for some $A\in \sA_1$. Inductively, having constructed a countable subset $\sA_k$ of $\sB$, we take a countable subset $\sA_{k+1}$ of $\sB$ containing $\sA'_k$ such that for any mean $\psi$ on $L^\infty(X, \mu)$ satisfying $\psi(s1_A)=\psi(1_A)$ for all $s\in H$ and
$A\in \sA_{k+1}$,
one has $\varphi(1_A)=\psi(1_A)$ for all $A\in \sA'_k$.

Now we put $\sA=\bigcup_k\sA_k$. This is a countable $G*H$-invariant subalgebra of $\sB$. For any mean $\psi$ on $L^\infty(X, \mu)$ satisfying $\psi(s1_A)=\psi(1_A)$ for all $s\in H$ and
$A\in \sA$,
one has $\varphi(1_A)=\psi(1_A)$ for all $A\in \sA$.
Denote by $\fA$ the $G*H$-invariant unital C$^*$-subalgebra of $L^\infty(X, \mu)$ generated by
the functions $1_A$ for $A\in \sA$. Then $\fA$ is the closure of the linear span of the functions $1_A$ for $A\in \sA$ in $L^\infty(X, \mu)$. Thus every state of $\fA$ is determined by its values on the functions $1_A$ for $A\in \sA$. Since every state of $\fA$ extends to a mean of $L^\infty(X, \mu)$, we conclude that $\varphi|_\fA$ is the unique $H$-invariant state on $\fA$.

Define a $G*H$-action on $\{0, 1\}^{\sA\times (G*H)}$ by $(sw)_{A,t} = w_{A,s^{-1} t}$
for $w\in \{0, 1\}^{\sA\times (G*H)}$, $A\in\sA$, and $s,t\in G*H$,
and consider the $G*H$-equivariant Borel map $\pi: X\rightarrow \{0, 1\}^{\sA\times (G*H)}$
given by $\pi(x)_{A, t}=1_A(t^{-1}x)=1_{tA}(x)$ for $x\in X$, $A\in \sA$, and $t\in G*H$.
Since $\sA_1\subseteq \sA$, the map $\pi$ is injective and hence is a Borel isomorphism from $X$ to $\pi(X)$ \cite[Corollary~15.2]{Kec95}.
Put $\mu_Z=\pi_*\mu$ and $Z=\supp(\mu_Z)$. Then $Z$ is zero-dimensional and $\mu_Z$ is a $G*H$-invariant Borel probability measure on $Z$ of full support. Put $Z_0=\pi(X)\cap Z$. Then $Z_0$ is $G*H$-invariant with $\mu_Z(Z_0)=1$, and $Gz=Hz$ for all $z\in Z_0$. The pull-back map $\pi^*: C(Z)\rightarrow L^\infty(X, \mu)$ is a $G*H$-equivariant $*$-homomorphism. From the Stone--Weierstrass theorem we get $\pi^*(C(Z))=\fA$. Since $Z$ is the support of $\pi_*\mu$, the map $\pi^*$ is injective and hence is an isomorphism from $C(Z)$ to $\fA$. Thus $C(Z)$ has a unique $H$-invariant state, which means that $H\curvearrowright Z$ is uniquely ergodic.

Now assume that $G\curvearrowright (X, \mu)$ and $H\curvearrowright (Y, \nu)$ are boundedly orbit equivalent.
By passing to suitable invariant subsets we may assume
that $G\curvearrowright X$ and $H\curvearrowright X$ are both genuinely free and that the cocycles $\kappa': G\times X\rightarrow H$ and $\lambda': H\times X\rightarrow G$ are both bounded. Adding more sets to $\sA_1$, we may assume that for every $t\in G$
(resp.\ $t\in H$) there is a finite partition $\sP$ of $X$ contained in $\sA_1$ such that $\kappa'$ (resp.\ $\lambda'$) is constant on $\{t\}\times P$ for every $P\in \sP$. Then we can extend $\kappa$ (resp.\ $\lambda$) continuously to $G\times Z\rightarrow H$ (resp.\ $H\times Z\rightarrow G$).
\end{proof}

\begin{theorem}\label{T-bounded OE}
Let $G\curvearrowright (X,\mu)$ and $H\curvearrowright (Y,\nu)$ be free p.m.p.\ actions
which are boundedly orbit equivalent.
Let $\sS$ be a collection of sofic approximations for $G$.
Suppose that $G$ has property $\sS$-SC
and that the action $H\curvearrowright (Y,\nu)$ is uniquely ergodic. Let $\Pi$ be a sofic approximation sequence in $\sS$. Then
\[
\hmax_\nu (H\curvearrowright Y)\ge h_{\Pi,\mu} (G\curvearrowright X).
\]
\end{theorem}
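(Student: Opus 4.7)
The plan is to reduce Theorem~\ref{T-bounded OE} to the topological analogue Theorem~\ref{T-continuous OE} by passing to a suitable topological model via Lemma~\ref{L-model}, and then convert back through the variational principle with the help of unique ergodicity. First I would invoke Lemma~\ref{L-model}, which under the assumption of bounded orbit equivalence and unique ergodicity of $H\curvearrowright (Y,\nu)$ provides a zero-dimensional compact metrizable space $Z$ carrying a continuous action of $G*H$, an invariant Borel probability measure $\mu_Z$ of full support, and a $G*H$-invariant Borel subset $Z_0\subseteq Z$ with $\mu_Z(Z_0)=1$ on which $G$ and $H$ act freely with $Gz=Hz$ and on which the associated cocycles $\kappa$ and $\lambda$ extend continuously to all of $Z$. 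Moreover $G\curvearrowright (Z,\mu_Z)$ and $H\curvearrowright (Z,\mu_Z)$ are measure conjugate to the original actions, and $H\curvearrowright Z$ is uniquely ergodic. Since $\mu_Z$ has full support and $\mu_Z(Z_0)=1$, the set $Z_0$ is dense in $Z$, so both continuous actions on $Z$ are topologically free; and since $gz=\kappa(g,z)z$ and $tz=\lambda(t,z)z$ hold on $Z_0$, by continuity and density of $Z_0$ they hold on all of $Z$, so that the identity map is a continuous orbit equivalence between $G\curvearrowright Z$ and $H\curvearrowright Z$.

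Next, property $\sS$-SC for the group $G$ in the sense of Definition~\ref{D-CSC 1} immediately implies property $\sS$-SC for the continuous action $G\curvearrowright Z$ in the sense of Definition~\ref{D-CSC 2}, since the latter only requires its conclusion to hold for those good-enough sofic approximations that admit microstates. Applying Theorem~\ref{T-continuous OE} then yields
\[
\hmax(H\curvearrowright Z)\ge h_\Pi(G\curvearrowright Z).
\]
The variational principle for sofic entropy (Theorem~10.35 of \cite{KerLi16}) on the $G$-side gives $h_\Pi(G\curvearrowright Z)\ge h_{\Pi,\mu_Z}(G\curvearrowright Z)$, which by measure conjugacy equals $h_{\Pi,\mu}(G\curvearrowright X)$. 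On the $H$-side, unique ergodicity means that $\mu_Z$ is the unique $H$-invariant Borel probability measure on $Z$, so for every sofic approximation sequence $\Sigma$ for $H$ the variational principle collapses to an equality $h_\Sigma(H\curvearrowright Z)=h_{\Sigma,\mu_Z}(H\curvearrowright Z)$ whenever $h_\Sigma(H\curvearrowright Z)\ge 0$; recalling that each side is either $-\infty$ or nonnegative and taking the maximum over $\Sigma$, this yields $\hmax(H\curvearrowright Z)\le\hmax_{\mu_Z}(H\curvearrowright Z)=\hmax_\nu(H\curvearrowright Y)$, where the final equality uses measure conjugacy. Chaining these three inequalities gives the theorem.

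The principal difficulty is not any single step of this chain but the packaging: bounded orbit equivalence is a measure-theoretic hypothesis, while property $\sS$-SC interacts naturally only with the point-map formulation of sofic entropy in the topological category. Lemma~\ref{L-model} is the crucial bridge, since it converts the measure-theoretic orbit equivalence into a bona fide continuous orbit equivalence on a space where the $H$-action is uniquely ergodic, and unique ergodicity is then what makes the variational principle an equality rather than merely an inequality on the $H$-side, permitting the passage from topological back to measure entropy without our having to control the empirical distributions of microstates---precisely the trade-off explained in the introduction.
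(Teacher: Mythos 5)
Your proof is correct and follows precisely the route the paper itself takes: the paper's proof consists of the single sentence ``Combine Lemma~\ref{L-model}, Theorem~\ref{T-continuous OE}, and the variational principle (Theorem~10.35 in \cite{KerLi16}),'' and your write-up is an accurate unpacking of exactly how those three ingredients fit together, including the observations (implicit in the paper) that density of $Z_0$ gives topological freeness and a genuine continuous orbit equivalence, that group $\sS$-SC trivially implies action $\sS$-SC, and that unique ergodicity collapses the variational principle to an equality on the $H$-side.
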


\begin{proof}
Combine Lemma~\ref{L-model}, Theorem~\ref{T-continuous OE}, and the
variational principle (Theorem~10.35 in \cite{KerLi16}).
\end{proof}

\end{document}